\numberwithin{equation}{section}
\numberwithin{equation}{subsection}
\newtheorem{theorem}{Theorem}[section]
\newtheorem{proposition}[theorem]{Proposition}
\newtheorem{conjecture}[theorem]{Conjecture}
\newtheorem{corollary}[theorem]{Corollary}
\newtheorem{lemma}[theorem]{Lemma}
\newtheorem{claim}[theorem]{Claim}
\newtheorem{defn}[theorem]{Definition}
\newtheorem{definition}[theorem]{Definition}
\newtheorem*{remark*}{Remark}
\newtheorem{remark}[theorem]{Remark}
\newtheorem{example}[theorem]{Example}
\newtheorem{question}[theorem]{Question}
\newcommand{\Z}{\mathbb{Z}}
\newcommand{\R}{\mathbb{R}}
\numberwithin{equation}{section}
\numberwithin{figure}{section}
\DeclareMathOperator{\tr}{tr}
\DeclareMathOperator{\gl}{GL}
\title{Introduction to finite Coxeter groups \\ and their representations} 
\author{Pooja Singla }
\address{Department of Mathematics and Statistics, IIT Kanpur,  Kanpur 208016, India}
\email{psingla@iitk.ac.in}
\keywords{Finite Coxeter groups; Classification; Reflection groups; Representation theory; Irreducible representations; Character theory; Weyl groups.}
\subjclass[2010]{Primary 20F55; Secondary 20C33, 05E10, 51F15.}
\begin{document}

	\begin{abstract}
These notes give a short introduction to finite Coxeter groups, their classification, and some parts of their representation theory, with a focus on the infinite families. They are based on lectures delivered by the  author at the conferences Recent Trends in Group Theory at IIT Bhubaneswar and the Asian–European School in Mathematics at NEHU, Shillong. The first draft was prepared by Archita Gupta (IIT Kanpur) and Sahanawaz Sabnam (NISER Bhubaneswar), to whom the author is deeply grateful. We hope these notes will be useful both for beginners and for readers who wish to study the subject further. The author also thanks the organizers and participants of the above conferences for their support and encouragement.
\end{abstract}
\maketitle

\section { Reflection groups and Finite Coxeter groups}

Symmetry is a fundamental and visually compelling concept in mathematics.
Consider an equilateral triangle, a square, or a regular pentagon: despite their differences, each of these figures possesses a rich symmetry structure.

\begin{center}
\begin{tikzpicture}[scale=1.2]
\coordinate (A) at (90:1);
\coordinate (B) at (210:1);
\coordinate (C) at (330:1);
\draw[thick] (A)--(B)--(C)--cycle;
\node at (0,-1.4) {Equilateral Triangle};

\begin{scope}[xshift=4cm]
\draw[thick] (-1,-1) -- (1,-1) -- (1,1) -- (-1,1) -- cycle;
\node at (0,-1.4) {Square};
\draw[dashed] (0,-1.2) -- (0,1.2); 
\end{scope}

\begin{scope}[xshift=8cm]
\foreach \i in {1,...,5}
  \coordinate (P\i) at (90+72*\i:1);
\draw[thick] (P1)--(P2)--(P3)--(P4)--(P5)--cycle;
\node at (0,-1.4) {Regular Pentagon};
\end{scope}
\end{tikzpicture}
\end{center}

These symmetries include both \emph{rotations} and \emph{reflections}, and in fact, reflections serve as the basic building blocks: many rotational symmetries can be expressed as the composition of two reflections.

As an example, we consider the group of symmetries, denoted $Sym^{4-gon}$, of the following square:
\begin{center}
\begin{tikzpicture}[scale=1]
    \coordinate (A) at (0,0);
    \coordinate (B) at (1,0);
    \coordinate (C) at (1,1);
    \coordinate (D) at (0,1);

    \draw (A) -- (B) -- (C) -- (D) -- cycle;

    \node[below left]  at (A) {1};
    \node[below right] at (B) {2};
    \node[above right] at (C) {3};
    \node[above left]  at (D) {4};
\end{tikzpicture}
\label{figure1}
\end{center}

Then $Sym^{4-gon}$= $\langle r,s \mid r^4=1, s^2=1,srs=r^{-1} \rangle$, where $r$ denotes the rotation by $90^0$ and $s$ denotes a reflection. 
 We note that this group is also given by  $Sym^{4-gon}= \langle s,rs \mid s^2=1, (rs)^2=1, (s.rs)^4=1 \rangle$, where $s$ and $rs$ are reflections.
Thus any element of the symmetry group of a square can be written as a product of reflections.

In two dimensions, the \emph{dihedral groups} describe the symmetries of regular $n$-gons and it can be easily seen that dihedral groups are generated by reflections of $\mathbb R^2$.  
In three dimensions, the cube or dodecahedron exhibit similar reflection-generated symmetries, with reflections now taken in planes rather than lines.

\begin{center}
\begin{tikzpicture}[scale=1.3]
\draw[thick] (0,0,0) -- (1,0,0) -- (1,1,0) -- (0,1,0) -- cycle;
\draw[thick] (0.4,0.4,0.4) -- (1.4,0.4,0.4) -- (1.4,1.4,0.4) -- (0.4,1.4,0.4) -- cycle;
\draw[thick] (0,0,0) -- (0.4,0.4,0.4);
\draw[thick] (1,0,0) -- (1.4,0.4,0.4);
\draw[thick] (1,1,0) -- (1.4,1.4,0.4);
\draw[thick] (0,1,0) -- (0.4,1.4,0.4);
\node at (0.7,-0.3,0) {Cube};
\end{tikzpicture}
\end{center}

Higher-dimensional analogues exist, and Coxeter groups provide a systematic framework for understanding \emph{all} symmetry groups generated by reflections, regardless of dimension. Finite Coxeter groups unify several familiar classes of groups:
\begin{itemize}
    \item Dihedral groups,
    \item Symmetric groups ($A_n$),
    \item Hyperoctahedral groups ($B_n$),
    \item Symmetry groups of Platonic solids.
\end{itemize}
Beyond their geometric role, Coxeter groups appear in:
\begin{itemize}
    \item Root systems in Lie theory,
    \item Weyl groups of semisimple Lie algebras and algebraic groups,
    \item Classification problems in algebra, combinatorics, and mathematical physics.
\end{itemize}

H.~S.~M.~Coxeter~\cite{MR1503182}, in the mid 20th century, studied reflection-generated groups systematically, extending earlier work by Schläfli, Killing, and others.
His introduction of the \emph{Coxeter diagram} gave a compact graphical method for recording all relations between generators.
Today, Coxeter's framework is fundamental in diverse areas of modern mathematics.

Many beautiful and highly symmetric structures in mathematics and nature can be described purely in terms of reflections and the relations between them.
Finite Coxeter groups provide a complete classification of all such symmetry groups in the finite case, while also leading to rich and intriguing infinite examples.

These notes provide an introduction to finite Coxeter groups, their classification, and their representations, with a primary focus on the infinite families of Coxeter groups. We begin with a brief introduction to finite Coxeter groups, their geometric and algebraic definitions, and their basic properties. This is followed by a discussion of their classification, highlighting both the infinite families and the exceptional types. We primarily follow \cite{MR1066460} for this part. 

Next, we review the essential concepts from the representation theory of finite groups, providing the necessary background for readers with little prior exposure. Building on this foundation, we discuss a method to obtain irreducible representations of the infinite families of Coxeter groups. For the introduction to representation, these notes are based on \cite{MR450380} and we follow \cite{MR2857707} for discussion regarding the representations of finite Coxeter groups. 
Some other references that reader may find interesting on this topic are \cite{MR2133266}, \cite{MR2561378}, \cite{MR3445448}, and \cite{MR777684}.

\subsection{Definition of finite reflection groups and their properties} 
 We fix a finite dimensional Euclidean space $V$ with a positive definite bilinear form $\langle, \rangle$. Whenever we do not specify the Euclidean space, we assume it to be $\mathbb R^n$ with the standard inner product. 
\begin{definition}[Reflection]  For  non-zero $ \alpha \in V$, let $f:V \to V$ be a linear map such that $f(\alpha)=- \alpha$ and $f(x)=x$ for all $x \in \alpha^\perp.$ Then we say $f$ is a reflection with respect to $\alpha$ and is denoted by $s_\alpha.$
\end{definition}

Recall the group of the symmetries of the  square. 
By considering the action of rotation $r$ and reflection $s$ on vertices, we have   

$r =
\begin{pmatrix}
    1 & 2 & 3 & 4\\
    2 & 3 & 4 & 1
\end{pmatrix}$,\,\, 
$s =
\begin{pmatrix}
    1 & 2 & 3 & 4\\
    1 & 4 & 3 & 2
\end{pmatrix}$,\,\, 
$srs =
\begin{pmatrix}
    1 & 2 & 3 & 4\\
    4 & 1 & 2 & 3
\end{pmatrix}$$=r^{-1}$.\\

By considering the coordinates of the vertices in $\mathbb R^2$ as given below: 

\begin{center}
   \begin{tikzpicture}
    \draw (-2,0) -- (2,0);
    \draw (-2,2) -- (2,2) ;
    \draw (0,3) -- (0,-1);
    \draw (-2,2) -- (-2,0);
    \draw (2,2) -- (2,0);
    \draw (-3,1) -- (3,1);
    \fill (-2,2) circle(0pt)node[anchor=east]{(-1,1)};
    \fill (-1.8,2) circle(0pt)node[anchor=north]{1};
    \fill (2,2) circle(0pt)node[anchor=west]{(1,1)} ;
     \fill (1.8,2) circle(0pt)node[anchor=north]{2};
     \fill (2,0) circle(0pt)node[anchor=west]{(1,-1)};
       \fill (1.8,0) circle(0pt)node[anchor=south]{3};
       \fill (-2,0) circle(0pt)node[anchor=east]{(-1,-1)};
       \fill (-1.8,0) circle(0pt)node[anchor=south]{4};
       \fill (0,2) circle(2pt);
       \fill (2,1) circle(2pt);
       \fill (0,1) circle(2pt);
       \end{tikzpicture} 
\end{center}
 we note that  $s=s_{(1,1)}$ and  $rs=s_{(1,0)}$. 
We observe that reflections are highly geometric in nature. By definition, $$s_\alpha(\lambda) = \lambda - \frac{2 \langle \lambda, \alpha\rangle }{\langle \alpha,\alpha\rangle } \alpha,$$
for any $\alpha \in V$. Also, $\langle s_\alpha (\lambda), s_\alpha (\lambda) \rangle  = \langle \lambda, \lambda 
\rangle $ for all $\alpha, \lambda \in V$. Therefore $s_\alpha \in O(V)$ for all $\alpha \in V$, where $O(V)$ is the orthogonal group associated to $(V, \langle, \rangle)$. Hence any finite Coxeter group is a subgroup of an orthogonal group.

\begin{definition}[Reflection group] A finite group generated by reflections is called reflection group.
\end{definition}
We give a few examples of reflection groups. 
\begin{example}
$Sym^{n-gon}= \langle s,rs \mid s^2=1, (rs)^2=1, (srs)^n=1 \rangle$ is a finite reflection group generated by two reflections of $\mathbb R^2$. 
\end{example} 

\begin{example}
The symmetric group $S_n$: For this consider the standard basis $\{\epsilon_i \mid i\}$ of $\R^n$. Then $S_n$ acts on $\R^n$ by permuting these basis vectors. Note that $(i,j)$ here corresponds to reflection $s_{\epsilon_i - \epsilon_j}$. Therefore the group  $S_n$ is a finite reflection group. The group $S_n$ has the following presentation: 
$$\langle (i,i+1) \mid (i,i+1)^2=1, ((i,i+1)(i+1,i+2))^3=1, ((i,i+1)(j,j+1))^2=1 \text{ for } |i-j| > 1\rangle.$$ 

\end{example}

\begin{definition} (Essential action) A finite reflection subgroup $G$ of $O(V)$ is called essential with respect to $V$ if $V$ does not have non-trivial fixed point with respect to the $G$-action. 
	
	\end{definition} 

The group $S_n$ in fact fixes the one dimensional subspace $W \subseteq \mathbb R^n$ generated by $\epsilon_1 + \epsilon_2 + \ldots + \epsilon_n$. By considering the action of $S_n$ on $V/W \cong \mathbb R^{n-1}$, we obtain an essential action. We will always consider the action of $S_n$ on this quotient space. 

\begin{example} (Hyperoctahedral groups) The hyperoctahedral group $B_n$ for $n \geq 2$ is defined by  $B_n = (\Z/2\Z)^n \rtimes S_n$
with action of $S_n$ on $(\Z/2\Z)^n$ given by 
\[
(a_1,\ldots,a_n)^{\sigma} =  (a_{\sigma(1)},\ldots,a_{\sigma(n)}).
\]
Hence elements of $B_n$ are the form  $((a_1,\ldots,a_n), \sigma )$ with $a_i \in \{-1,1\}$ and $ \sigma \in S_n$.
This group acts on $R^n$ as follows: 
\begin{itemize} 
\item $(a_1, a_2, \cdots, a_n) (v_1, v_2, \cdots, v_n) = (a_1 v_1, \cdots, a_n v_n)$
\item $\sigma(v_1, \cdots, v_n) = (v_{\sigma(1)}, \cdots, v_{\sigma(n)}).$
\end{itemize} 
The group $B_n$ is generated by the permutation matrices and the diagonal matrices $d_i$, where all diagonal entries are equal to $1$ except the $i^{th}$ entry and the $i^{th}$ entry which is $-1.$ We note that this group is generated by reflections and hence is a finite reflection group. We also observe that the action of $B_n$ on $\mathbb R^n$ is essential.  
	
\end{example}

\begin{proposition}
\label{prop:conjugation-of-reflection}
	If $t \in O(V)$ and $\alpha \in V$ then $t s_\alpha t^{-1} = s_{t(\alpha)}$. 
	\end{proposition}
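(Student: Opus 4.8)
The plan is to show that the conjugate $t s_\alpha t^{-1}$ satisfies the two defining properties of a reflection with respect to the vector $t(\alpha)$, and then invoke the uniqueness of such a reflection. Recall that $s_\beta$ is characterized as the linear map with $s_\beta(\beta) = -\beta$ and $s_\beta(x) = x$ for all $x \in \beta^\perp$; since $V = \mathbb{R}\beta \oplus \beta^\perp$, these two conditions determine $s_\beta$ uniquely. Note also that $t s_\alpha t^{-1}$ is automatically linear, being a composition of linear maps, so it is a legitimate candidate to be a reflection.

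First I would evaluate the conjugate on $t(\alpha)$: using $s_\alpha(\alpha) = -\alpha$, we get $t s_\alpha t^{-1}(t(\alpha)) = t(s_\alpha(\alpha)) = t(-\alpha) = -t(\alpha)$. Next I would take an arbitrary $x \in t(\alpha)^\perp$ and check that it is fixed. The key observation is that $t^{-1}(x) \in \alpha^\perp$: since $t \in O(V)$ preserves the bilinear form, $\langle t^{-1}(x), \alpha \rangle = \langle x, t(\alpha) \rangle = 0$. Hence $s_\alpha(t^{-1}(x)) = t^{-1}(x)$, and therefore $t s_\alpha t^{-1}(x) = t(t^{-1}(x)) = x$. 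Combining the two computations, $t s_\alpha t^{-1}$ is a linear map sending $t(\alpha)$ to its negative and fixing $t(\alpha)^\perp$ pointwise, so by uniqueness it equals $s_{t(\alpha)}$.

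The argument is entirely routine; the only point requiring any care is the middle step, where orthogonality of $t$ is exactly what guarantees that $t^{-1}$ carries the hyperplane $t(\alpha)^\perp$ onto $\alpha^\perp$ — without this, the conjugate would not be a reflection in our sense. Alternatively, one could bypass the case analysis altogether by using the explicit formula $s_\alpha(\lambda) = \lambda - \frac{2\langle \lambda,\alpha\rangle}{\langle\alpha,\alpha\rangle}\alpha$: compute $t\bigl(s_\alpha(t^{-1}(\mu))\bigr)$ for an arbitrary $\mu \in V$, substitute $\langle t^{-1}(\mu),\alpha\rangle = \langle \mu, t(\alpha)\rangle$ and $\langle \alpha,\alpha\rangle = \langle t(\alpha), t(\alpha)\rangle$, and read off that the result equals $\mu - \frac{2\langle \mu, t(\alpha)\rangle}{\langle t(\alpha), t(\alpha)\rangle} t(\alpha) = s_{t(\alpha)}(\mu)$.
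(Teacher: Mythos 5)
Your proof is correct. The paper actually states this proposition without any proof, so there is nothing to compare against; your argument (checking that $t s_\alpha t^{-1}$ negates $t(\alpha)$ and fixes $t(\alpha)^\perp$ pointwise, with orthogonality of $t$ supplying $t^{-1}\bigl(t(\alpha)^\perp\bigr) = \alpha^\perp$, then invoking uniqueness of the reflection) is the standard one, and the alternative computation via the explicit formula for $s_\alpha$ is equally valid. The only cosmetic point is that one should read the hypothesis as $\alpha \neq 0$, since $s_\alpha$ is only defined for non-zero $\alpha$.
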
 
   For any non-zero vector $\alpha \in V$, let $\mathbb R\alpha $ denote  the one dimensional space generated by $\alpha$, we shall call this to be a line passing through $\alpha.$  We note that the reflection group $W$ permutes the set  $\{ \mathbb R\alpha \mid s_\alpha \in W \}$ by \Cref{prop:conjugation-of-reflection}.  Therefore, the group $W$ acts on the following set: 
   $$\Phi= \{ \alpha \in V \mid s_{\alpha} \in W, || \alpha ||=1 \}.$$  

We note that $\Phi$ satisfies the following properties:
\begin{enumerate}
	\item[R1:] $\Phi$ is a finite set. 
	\item[R2:] $\Phi \cap \mathbb R \alpha = \{ \pm \alpha\}$ for all $\alpha \in \Phi$.
	\item[R3:] $s_\alpha(\Phi) = \Phi$ for all $\alpha \in \Phi$. 
\end{enumerate}
Any finite subset $\Phi \subset V$ satisfying the properties $(R1)-(R3)$ is called a {\bf Root system} of $V$. At times, we may also insist that all vectors of $\Phi$ are unit vectors. The following highlights the connection between the reflection groups and Root systems. 
\begin{proposition}
	Any reflection group $W$ determines a root system $\Phi$. Conversely, given a root system $\Phi$, the group generated by $\{s_\alpha \mid \alpha \in \Phi\}$ is a finite reflection group.  
\end{proposition}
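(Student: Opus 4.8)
\emph{The plan is to prove the two implications separately.}

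\textbf{From a reflection group to a root system.} Given a reflection group $W$, I would take $\Phi = \{\alpha \in V : s_\alpha \in W,\ \|\alpha\| = 1\}$ exactly as in the discussion preceding the statement, and check the axioms (R1)--(R3). Axiom (R2) is immediate, since the line $\R\alpha$ contains precisely the two unit vectors $\pm\alpha$ and $s_\alpha = s_{-\alpha}$. Axiom (R3) follows from \Cref{prop:conjugation-of-reflection}: for $\alpha,\beta \in \Phi$ one has $s_\alpha s_\beta s_\alpha^{-1} = s_{s_\alpha(\beta)} \in W$, and $s_\alpha(\beta)$ is again a unit vector because $s_\alpha \in O(V)$, so $s_\alpha(\beta) \in \Phi$; thus $s_\alpha(\Phi) \subseteq \Phi$, and equality follows by applying $s_\alpha$ twice. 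For (R1), the map $\alpha \mapsto s_\alpha$ sends $\Phi$ onto the set of reflections lying in $W$ and is two-to-one by (R2); since $W$ is a finite group it contains only finitely many reflections, so $\Phi$ is finite. There is no real obstacle in this direction.

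\textbf{From a root system to a reflection group.} Let $\Phi$ be a root system and $W = \langle s_\alpha : \alpha \in \Phi\rangle$. By construction $W$ is generated by reflections, so the only thing to prove is that $W$ is finite, and the strategy is to realize $W$ as a subgroup of a finite symmetric group. First I would note that $W \subseteq O(V)$ (each generator is), and that $\{g \in O(V) : g(\Phi) = \Phi\}$ is a subgroup of $O(V)$ which, by (R3), contains every generator $s_\alpha$; hence $W$ preserves the finite set $\Phi$, and restriction of the action gives a homomorphism $\rho \colon W \to \mathrm{Sym}(\Phi)$.

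It remains to show that $\rho$ is injective, which is the crux of the argument. Suppose $w \in \ker\rho$, i.e. $w(\alpha) = \alpha$ for every $\alpha \in \Phi$. Then $w$ is the identity on the subspace $U = \mathrm{span}_{\R}(\Phi)$. On the other hand $w$ is a product of reflections $s_\alpha$ with $\alpha \in \Phi \subseteq U$, and each such $s_\alpha$ fixes $\alpha^{\perp}$, hence fixes $U^{\perp} \subseteq \alpha^{\perp}$, pointwise; so $w$ is also the identity on $U^{\perp}$. Since $V = U \oplus U^{\perp}$, this forces $w = \mathrm{id}_V$. Therefore $\rho$ embeds $W$ into the finite group $\mathrm{Sym}(\Phi)$, and $W$ is finite. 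The delicate point — and where I expect the main obstacle to lie — is precisely this injectivity step: one must observe that the reflections generating $W$ act trivially on the orthogonal complement of $\mathrm{span}(\Phi)$, which is what lets the argument go through even when the action of $W$ on $V$ is not essential.
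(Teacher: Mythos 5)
Your proposal is correct and follows essentially the same route as the paper: the forward direction is the construction of $\Phi$ already given in the surrounding text, and the converse is proved by embedding $W$ into $\mathrm{Sym}(\Phi)$ and checking injectivity via the decomposition $V = \mathrm{span}(\Phi) \oplus \mathrm{span}(\Phi)^{\perp}$. If anything, you supply a detail the paper states without justification, namely why an element of the kernel must also fix $\mathrm{span}(\Phi)^{\perp}$ (because each generating reflection $s_\alpha$ with $\alpha \in \Phi$ does).
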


\begin{proof}
	Let $X = Span(\Phi)$. By definition, $W \subseteq O(V)$. We have a map $W \rightarrow \mathrm{Perm}(\Phi)$. We claim that this is an injective homomorphism. Let $w \in W$ such that $w(\alpha ) = \alpha$ for all $\alpha \in \Phi$. Since $W$ fixes all the vectors orthogonal to $\mathrm{Span}(\Phi)$, we obtain that $w$ fixes all vectors of $V$. Therefore $w = e$. This implies that map is injective.   
	
	\end{proof} 
 Thus, any finite reflection group is of the form $W_\Phi$ for some root system $\Phi$. 
 \begin{remark}
 There may be more than one root systems giving the same reflection group. 
\end{remark} 
\begin{example}
	For $Sym^{4-gon}$, a root system is given by $\Phi = \{ (0,\pm 1), (\pm 1, \pm 1) (\pm 1, 0)  \}$. These are not unit vectors but still satisfy $R1-R3$. 
\end{example}

\subsection{Definition of finite Coxeter groups and their properties}
\begin{definition}[Coxeter Group]
\label{def:Coxeter-group}
A pair $(W,S)$ consisting of a group $W$ and its set of generators $S \subseteq W$ with only relations of the form $(ss')^{m(s,s')}=1,$ where  $m(s,s')$ satisfy the following:  
\begin{enumerate}
	\item $m(s,s)=1$  for all $s \in S$ 
	\item  $m(s,s')=m(s',s) \ge 2$ for $s \neq s'$ in $S,$
	\item  $m(s,s')=\infty$ if no relation between $s$ and $s'.$ 
	\end{enumerate}
    is called a Coxeter system. The group $W$ with a Coxeter system $(W, S)$ for some $S$ is called a Coxeter group. 
    \end{definition}
The cardinality of the set $S$ is called to be the {\bf rank of $(W, S).$} For a Coxeter group $W,$ a presentation of the form
\begin{eqnarray}
\label{eq:Coxeter-presentation}
W & = & \langle s \mid (ss')^{m(s, s')} = 1 
 \rangle, 
 \end{eqnarray}
 where $m(s, s')$ satisfy the conditions of the Definition~\ref{def:Coxeter-group} is called a Coxeter presentation of the Coxeter group $W$. 

\begin{example}
   The group $Sym^{n- gon}$ is a coxter group because it has the following presentation:
   \[
   Sym^{n-gon}= \langle s,rs \mid s^2=1, (rs)^2=1, (srs)^n=1 \rangle. 
   \]
    \end{example}
    
    \begin{example}[The symmetric group $S_n$]
    \label{exam:Sn-Coxeter}
  The group $S_n$ is a Coxeter group, because it has the following presentation: $$S_n=\langle x_1,\ldots,x_{n-1} \mid x_i^2=1, (x_ix_j)^3=1 \text{ for } |i-j|=1, (x_ix_j)^2=1 \text{ for } |i-j| > 1 \rangle.$$
    \end{example}
    \begin{example}[Hyperoctahedral group $B_n$]
    \label{exam:Bn}
The group $B_n$ has the following presentation: 

$B_n:= \langle x_0,x_1,\ldots,x_{n-1} \mid x_i^2=1, (x_ix_j)^3=1 \text{ for } |i-j|=1 \text{ and } i,j \ge 1, (x_ix_j)^2=1 \text{ for } |i-j| > 1 \text{ and } i,j \ge 1 , (x_0x_1)^4=1 \rangle$.\\

Hence $B_n$ is a Coxeter group. 
 \end{example}
 
We now define the notion of the Coxeter matrix and Coxeter graph associated to a Coxeter group.  

\begin{definition}(Coxeter matrix)  For a Coxeter system $(W, S)$ with presentation as given in \Cref{eq:Coxeter-presentation}, we define a symmetric $|S| \times |S|$ matrix  $ M$ such that $M= (m_{s,s'})$, where $m(s, s')$ are as appeared in the Coxeter presentation \Cref{eq:Coxeter-presentation}. Then the matrix $M$ is called the Coxeter matrix of $(W, S)$. 
\end{definition}
\begin{definition}(Coxeter Graph) 
An undirected graph with vertices $\{e_s \mid s \in S\}$ and edges given by  
\begin{itemize}
\item There exists an edge from $e_s$ to $e_{s'}$ iff $m(s, s') \geq 3.$
\item The edge from $e_s$ to $e_{s'}$ is labeled by $m(s, s')$ if and only if  $m(s, s') \geq 4.$
\end{itemize}
is called a Coxeter graph of $(W, S)$. 
\end{definition}
Whenever the Coxeter presentation of the Coxeter group $W$ is already fixed, we call the obtained Coxeter matrix and Coxeter graph to be the Coxeter matrix and graph of $W$.
\begin{example}(Coxeter graph and Coxeter matrix of $S_n$) From the presentation of $S_n$ as appeared in Example~\ref{exam:Sn-Coxeter}, we obtain the  the following as Coxeter diagram and Coxeter matrix of $S_n.$
\begin{center}
    \begin{tikzpicture}
\draw (-6,0) -- (6,0);
    \fill (-6,0) circle(2pt);
    \fill (-4,0) circle(2pt);
    \fill (-2,0) circle(2pt);
    \fill (0,0) circle(2pt);
    \fill (2,0) circle(2pt);
    \fill (4,0) circle(2pt);
    \fill (6,0) circle(2pt);
\end{tikzpicture}
\label{Coxter graph of $S_{n+1}$}  
\end{center}

\textbf{Coxeter matrix of $S_n$:}
$\begin{pmatrix}
   1 & 3 & 2 & 2 & ... & 2 & 2 & 2\\
   3 & 1 & 3 & 2 & ... & 2 & 2 & 2\\
   2 & 3 & 1 & 3 & ... & 2 & 2 & 2\\
   ..  & .. & .. & .. & ... & .. & .. & ..\\
   2 & 2 & 2 & 2 & . ..& 1 & 3 & 2\\
   2 & 2 & 2 & 2 &  ...& 3 & 1 & 3\\
   2 & 2 & 2 & 2 &.. . & 2 & 3 & 1\\
\end{pmatrix}$
\end{example}
 \begin{example}[Coxeter graph of $B_n$] For $B_n$, with presentation as given in \Cref{exam:Bn}, we obtain the following as Coxeter  graph of $B_n:$ 

\begin{tikzpicture}
 \draw (-4,0) -- (0,0); 
   \draw (-6,0) -- (-4,0)node[midway,above]{4};
   \draw[loosely dotted] (0,0) -- (4,0);
   \draw (4,0) -- (6,0); 
   \fill (-6,0) circle(2pt)node[anchor=north]{$x_0$};
    \fill (-4,0) circle(2pt)node[anchor=north]{$x_1$};
     \fill (-2,0) circle(2pt)node[anchor=north]{$x_2$};
      \fill (0,0) circle(2pt)node[anchor=north]{$x_3$};
       \fill (4,0) circle(2pt)node[anchor=north]{$x_{n-2}$};
        \fill (6,0) circle(2pt)node[anchor=north]{$x_{n-1}$};
\end{tikzpicture}\\
\end{example} 

\subsection{Relation between finite reflection groups and finite Coxeter groups} 

In this section, we prove that every reflection group is a finite Coxeter group and conversely every finite Coxeter group is a reflection group. We will use this to classify all finite Coxeter groups.

We first discuss the ideas towards proving that a finite reflection group is a finite Coxeter group. The properties of the Root system associated to the Coxeter group can be studied independently and these lead to proving that any finite reflection group is a finite Coxeter group as follows: 

\begin{theorem}  Let $W$ be a finite reflection group with a root system $\Phi$. 
	  \begin{enumerate} 
		\item There exists a subset $\Delta$ of $\Phi$ such that $\Delta$ is a basis of $V$ and every $\alpha \in \Phi$ is either a non-negative linear combination of elements of $\Delta$ or a non-positive linear combination of elements of $\Delta.$
		\item 	The group $W$ is a finite Coxeter group generated by the set $\{s_\alpha \}_{\alpha \in \Delta}$ subject only to the relations
		$$(s_\alpha s_\beta)^{m_{\alpha,\beta}}=1; \hspace{0.5cm} \alpha, \beta \in \Delta.$$ 
		\end{enumerate} 
\end{theorem}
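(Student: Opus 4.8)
The plan is to build a \emph{simple system} $\Delta\subseteq\Phi$ out of a generic linear functional and then to study the length function of $W$ relative to the reflections in $\Delta$. For part~(1), pick $t\in V$ not lying on any of the finitely many hyperplanes $\alpha^{\perp}$, $\alpha\in\Phi$ (possible by (R1), since a real vector space is not a finite union of proper subspaces). This splits $\Phi=\Phi^{+}\sqcup\Phi^{-}$ with $\Phi^{+}=\{\alpha:\langle t,\alpha\rangle>0\}$ and $\Phi^{-}=-\Phi^{+}$ (using (R2)); call $\alpha\in\Phi^{+}$ \emph{simple} if it is not a positive real combination of two or more elements of $\Phi^{+}$, and let $\Delta$ be the set of simple roots. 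The two lemmas to prove are: (a) distinct $\alpha,\beta\in\Delta$ satisfy $\langle\alpha,\beta\rangle\le 0$ — otherwise $s_{\alpha}(\beta)=\beta-c\alpha$ with $c>0$ is a root by (R3), and whether it lies in $\Phi^{+}$ or in $\Phi^{-}$ exhibits $\beta$, resp.\ $\alpha$, as a decomposable positive root; and (b) a finite family of vectors lying in an open half-space with pairwise non-positive inner products is linearly independent — split a hypothetical dependence as $x=\sum_{i\in P}a_iv_i=\sum_{j\in N}b_jv_j$ with $a_i,b_j>0$ and $P\cap N=\varnothing$, note $\langle x,x\rangle\le 0$ forces $x=0$, then $\langle t,x\rangle>0$ unless $P=N=\varnothing$. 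An induction on $\langle t,\alpha\rangle$ then shows every $\alpha\in\Phi^{+}$ is a non-negative combination of $\Delta$ (a non-simple $\alpha$ equals $\sum a_i\beta_i$ with $\beta_i\in\Phi^{+}$, $a_i>0$, $\langle t,\beta_i\rangle<\langle t,\alpha\rangle$), so $\Delta$ spans $V$; here one uses $\mathrm{Span}(\Phi)=V$, i.e.\ that the action is essential (otherwise pass to $\mathrm{Span}(\Phi)$). With (b), $\Delta$ is a basis with the asserted sign property.

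For part~(2), put $S=\{s_{\alpha}:\alpha\in\Delta\}$ and $W'=\langle S\rangle\le W$. I would first show $W'=W$: it suffices that $s_{\beta}\in W'$ for every $\beta\in\Phi^{+}$, and writing $\beta=\sum_{\alpha\in\Delta}c_{\alpha}\alpha$ with $c_\alpha\ge 0$ and $\mathrm{ht}(\beta):=\sum c_{\alpha}$, I induct on $\mathrm{ht}(\beta)$ to prove $\beta=w(\alpha)$ for some $w\in W'$, $\alpha\in\Delta$. Indeed, if $\beta\notin\Delta$ then $0<\langle\beta,\beta\rangle=\sum_\alpha c_\alpha\langle\alpha,\beta\rangle$ gives a simple $\alpha$ with $\langle\alpha,\beta\rangle>0$ and $\beta\ne\pm\alpha$ (by (R2)), whence $s_{\alpha}(\beta)\in\Phi^{+}$ by the part~(1) dichotomy and $\mathrm{ht}(s_\alpha\beta)<\mathrm{ht}(\beta)$; apply the inductive hypothesis and conclude $s_{\beta}=s_{\alpha}\,s_{s_{\alpha}\beta}\,s_{\alpha}\in W'$ by \Cref{prop:conjugation-of-reflection}. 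Thus $W$ is generated by $S$; since $W$ is finite, each $m_{\alpha,\beta}:=\mathrm{ord}(s_{\alpha}s_{\beta})$ is finite and $s_\alpha^2=1$, so there is a canonical surjection $\pi\colon\widetilde{W}\twoheadrightarrow W$ from the abstract Coxeter group $\widetilde{W}=\langle\,\widetilde{s}_{\alpha}\ (\alpha\in\Delta)\mid(\widetilde{s}_{\alpha}\widetilde{s}_{\beta})^{m_{\alpha,\beta}}=1\,\rangle$.

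The substantive point is the injectivity of $\pi$ — that the Coxeter relations already imply every relation among the $s_{\alpha}$. Here I would introduce the length $\ell(w)$, the minimal number of factors from $S$ needed to express $w$, and prove, via the standard computation (using (R3)) that each $s_{\alpha}$ ($\alpha\in\Delta$) permutes $\Phi^{+}\setminus\{\alpha\}$, that the statistic $n(w):=\#\{\gamma\in\Phi^{+}:w(\gamma)\in\Phi^{-}\}$ equals $\ell(w)$ and changes by exactly $\pm1$ under right multiplication by an element of $S$, with sign according to whether $w(\alpha)\in\Phi^{+}$; this is the Exchange Condition. From it one either invokes the Matsumoto--Tits theorem — any two reduced words for the same element of $W$ are linked by braid moves, which are consequences of the defining relations of $\widetilde{W}$, so any relation $s_{\alpha_1}\cdots s_{\alpha_k}=1$ reduces to the empty word modulo the Coxeter relations — or argues geometrically that the open chamber $C=\{v\in V:\langle v,\alpha\rangle>0\ \forall\,\alpha\in\Delta\}$ is a strict fundamental domain on which $W$ permutes the chambers simply transitively, and connectivity of the chamber-adjacency graph across codimension-one walls gives the presentation. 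I expect this last step to be the main obstacle: everything up to the surjection $\pi$ is elementary inner-product geometry, whereas upgrading ``$W$ is generated by $S$'' to ``$W$ is \emph{presented} by the Coxeter relations'' genuinely requires the Exchange/Deletion machinery (or the fundamental-domain argument), not merely a calculation.
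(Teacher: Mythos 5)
The paper itself gives no proof of this theorem: it is stated as a summary of results whose proofs are deferred to \cite{MR1066460}, so there is no in-text argument to compare yours against. Your proposal follows the standard route of that reference (generic functional $t$, positive system $\Phi^{+}$, simple system $\Delta$, pairwise non-positive inner products, linear independence, then generation and the presentation via the length function), and the overall architecture is right, including your correct identification that the genuinely hard content is upgrading ``generated by $S$'' to ``presented by the Coxeter relations,'' which does require the exchange/deletion condition or the simply transitive chamber action.

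One step in part (1) does not work as written. You define $\alpha\in\Phi^{+}$ to be simple when it is not a positive combination of two or more positive roots, and then induct on $\langle t,\alpha\rangle$, asserting that a decomposition $\alpha=\sum a_i\beta_i$ has $\langle t,\beta_i\rangle<\langle t,\alpha\rangle$. From $\langle t,\alpha\rangle=\sum a_i\langle t,\beta_i\rangle$ with all terms positive you only get $a_i\langle t,\beta_i\rangle<\langle t,\alpha\rangle$; if some $a_i<1$ this does not bound $\langle t,\beta_i\rangle$ by $\langle t,\alpha\rangle$, and coefficients less than $1$ genuinely occur for non-crystallographic systems such as $I_2(5)$ or $H_3$, to which this theorem must still apply, so the induction is not well-founded as stated. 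The standard repair is to define $\Delta$ instead as a subset of $\Phi^{+}$ minimal with respect to the property that every positive root is a non-negative combination of its elements (such a subset exists because $\Phi$ is finite); the non-negative-span property then holds by construction, one proves $\langle\alpha,\beta\rangle\le 0$ for distinct $\alpha,\beta\in\Delta$ by a contradiction against minimality, and your Lemma (b) then gives linear independence. With that substitution part (1) is complete, and your part (2) is a correct outline; as you acknowledge, the Matsumoto--Tits or fundamental-domain step carries the real weight and is only named, not carried out.
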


The set $\Delta$ in the above result is called a base of the root system $\Phi.$ For the finite reflection group $S_n= \langle (i,j) \mid i \neq j \rangle$, we have
$\Phi= \{ \epsilon_i - \epsilon_j \mid i \neq j \}$ and 
$\Delta=\{ \epsilon_i- \epsilon_{i+1} \mid 1 \le i \le {n-1}\}.$

We now proceed to prove that  every finite Coxeter group is a  reflection groups. The definition of a finite Coxeter group is as an abstract group. To show that $(W, S)$ is a finite reflection group, we first define a Euclidean space as follows: 

Define a finite dimensional real vector space $V$ by $V=span_\R\{\alpha_s \mid s \in S\}$.  Define a bilinear form on $V$ by 

\begin{eqnarray} 
	\label{eqn-Coxeter-bilinear}
B: V \times V \to \R; \,\,    \,\, B(\alpha_s, \alpha_s')  = -cos\frac{\pi}{m(s,s')}.
\end{eqnarray} 
We note that $B$ is a symmetric and non-degenerate bilinear form on $V$. Define $\sigma_s: V \to V$ by $$\sigma_s(v)= v - 2B(\alpha_s, v)\alpha_s.$$ Then $\sigma_s^2=1$, $\sigma_s \in \mathrm{GL}(V)$ and $B(\sigma_s v ,\sigma_s w )=B(v ,w )$ for all $s \in S$ and $v, w \in V$.  The following result shows that any finite Coxeter group is a finite reflection group. 
\begin{theorem}
	There exists a unique `faithful' homomorphism $\sigma: W \to \mathrm{GL}(V)$ sending $s \to \sigma_s$ such that $\sigma(W)$ is a reflection group.
\end{theorem}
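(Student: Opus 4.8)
The goal is to construct a faithful homomorphism $\sigma\colon W \to \mathrm{GL}(V)$ extending the assignment $s \mapsto \sigma_s$, and then to identify $\sigma(W)$ as a reflection group. The first task is to show that $\sigma$ is well-defined as a group homomorphism. Since $W$ is presented by generators $S$ and relations $(ss')^{m(s,s')}=1$, by the universal property of a group given by generators and relations it suffices to check that the operators $\sigma_s \in \mathrm{GL}(V)$ satisfy the same relations, i.e. that $(\sigma_s\sigma_{s'})^{m(s,s')}=1$ in $\mathrm{GL}(V)$ for all $s,s'\in S$. The case $s=s'$ is already recorded ($\sigma_s^2=1$). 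For $s\neq s'$ with $m=m(s,s')<\infty$, one restricts attention to the plane $V_{s,s'}=\mathrm{span}_\R\{\alpha_s,\alpha_{s'}\}$ (which is $\sigma_s$- and $\sigma_{s'}$-stable) and its $B$-orthogonal complement (on which both act trivially); on the plane, $B$ restricted is positive definite because $B(\alpha_s,\alpha_{s'})=-\cos(\pi/m)$, so $\sigma_s,\sigma_{s'}$ are genuine Euclidean reflections in two lines meeting at angle $\pi/m$, whence their product is a rotation by $2\pi/m$ of order exactly $m$. When $m(s,s')=\infty$ there is no relation to verify. This establishes existence of $\sigma$.

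Next is faithfulness, which is the heart of the argument. The standard route is to use the geometric representation together with the notion of the root system $\Phi = \{\,w(\alpha_s) : w\in W,\ s\in S\,\}$ and the associated partition into positive and negative roots relative to the simple system $\Delta=\{\alpha_s : s\in S\}$. The key combinatorial input is the \emph{exchange condition} / the characterization of the length function $\ell(w)$ as the number of positive roots sent to negative roots by $\sigma(w)$: one proves by induction on $\ell(w)$ that for a reduced expression $w=s_{i_1}\cdots s_{i_k}$, the element $\sigma(w)$ sends $\alpha_{s_{i_k}}$ to a negative root and more generally that $\ell(ws) > \ell(w)$ iff $\sigma(w)(\alpha_s)$ is a positive root. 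Granting this, if $\sigma(w)=\mathrm{id}$ then $w$ sends every positive root to a positive root, forcing $\ell(w)=0$, i.e. $w=e$; hence $\ker\sigma$ is trivial. Finally, since each $\sigma_s$ is a reflection (in the line $\R\alpha_s$, with respect to the form $B$), $\sigma(W)$ is by definition generated by reflections, and as $W$ is finite so is $\sigma(W)$; thus $\sigma(W)$ is a finite reflection group. Uniqueness of $\sigma$ is immediate since $S$ generates $W$ and the value on each generator is prescribed.

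\textbf{Main obstacle.} The genuinely substantive step is the faithfulness proof, specifically setting up the positive/negative root dichotomy and proving the length–inversion formula $\ell(ws)>\ell(w) \iff \sigma(w)(\alpha_s)>0$. This requires the ``root string'' lemma that each root $\sigma(w)(\alpha_s)$ is either a nonnegative or a nonpositive combination of the $\alpha_t$ — a fact that is not formal and uses the explicit form of $B$ together with an induction on length (and implicitly the deletion/exchange property of Coxeter groups). Everything else — well-definedness via the dihedral planar computation, the observation that $\sigma_s$ is a $B$-reflection, and finiteness of the image — is routine once this combinatorial core is in place. In the write-up I would either cite the relevant sections of \cite{MR1066460} for the exchange condition and the root-system partition, or develop just enough of it to extract the statement ``$\sigma(w)=\mathrm{id}\Rightarrow w=e$.''
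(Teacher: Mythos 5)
The paper itself gives no proof of this theorem: it is stated as a known result and the development is deferred to \cite{MR1066460}, so there is no in-paper argument to compare yours against. Measured against the standard proof (which is what the paper is implicitly invoking), your outline follows exactly the canonical route: well-definedness by checking the dihedral relations on the planes $\mathrm{span}_{\R}\{\alpha_s,\alpha_{s'}\}$, where the Gram matrix $\left(\begin{smallmatrix}1 & -\cos(\pi/m)\\ -\cos(\pi/m) & 1\end{smallmatrix}\right)$ has determinant $\sin^2(\pi/m)>0$ so the product $\sigma_s\sigma_{s'}$ is a rotation of order exactly $m$; faithfulness via the partition of $\Phi$ into positive and negative roots and the equivalence $\ell(ws)>\ell(w)\iff \sigma(w)(\alpha_s)>0$; and uniqueness because $S$ generates $W$. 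All of this is correct, and you have correctly located the crux.

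That said, what you have written is a proof plan rather than a proof: the entire weight of faithfulness rests on the length--inversion formula and the dichotomy that every root is a nonnegative or nonpositive combination of simple roots, and these are only described, not established. Since the theorem being proved is precisely the nontrivial content that the paper chose to cite rather than prove, ``cite \cite{MR1066460} for the exchange condition'' leaves the substantive part undone; to count as a proof you would need to carry out the induction on $\ell(w)$ that you sketch. One further small point worth making explicit: the paper's definition of a reflection group requires a finite group generated by reflections orthogonal with respect to a positive definite inner product, whereas your $\sigma_s$ are a priori only $B$-reflections. For finite $W$ one either observes that $B$ itself is positive definite (a fact the paper records separately) or averages an arbitrary inner product over $\sigma(W)$ to obtain an invariant positive definite form for which each $\sigma_s$, being an involution fixing a hyperplane and negating a line, is a genuine reflection. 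Your closing sentence asserts the conclusion but skips this step.
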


We have now proved that every finite Coxeter group is a finite reflection group and conversely. Now on wards, we will use these terms interchangeably. In the next section, we will prove the classification of finite Coxeter groups. 
\subsection{Classification of finite Coxeter groups}
We now proceed to prove the classification of finite Coxeter groups $(W, S)$. We first consider the rank two case. In this case, the only possible Coxeter matrices are  $\begin{pmatrix}
	1 & k\\
	k & 1
\end{pmatrix}$ for $2 \leq k$ with Coxeter graphs given by 
\hspace{5cm}
\begin{tikzpicture}
    \fill (0,0);  \fill (0.5,0) circle(2pt);
    \fill (2.0,0) circle(2pt);
    \end{tikzpicture} \hspace{1cm} for $k = 2$\\

$\vspace{0.5cm}
 \begin{tikzpicture}   
    \fill (0,-1.5) circle(2pt);
    \fill (1.5,-1.5) circle(2pt);
     \draw(0,-1.5) -- (1.5,-1.5)node[midway,above]{k};
 \end{tikzpicture}$  \hspace{1cm} for $k \geq 3$. 
 
 In this first case, that is $k = 2$, we obtain the reflection group $\Z/2\Z \times \Z/2\Z$ and for $k \geq 3$, we obtain the group $I_2(k)$. 
 
 We note that if the Coxeter graph of a Coxeter group $(W, S)$ is disconnected then there exists subgroups $(W_1, S_1)$ and $(W_2, S_2)$ such that $W \cong W_1 \times W_2$ and $W_{i}$'s are Coxeter groups. Hence for the purpose of the classification of the finite Coxeter groups, it is sufficient to assume that the Coxeter graph of the given Coxeter group is connected. A Coxeter group with a connected Coxeter graph is called an {\it irreducible Coxeter group}. 
 
 Let $W$ be an irreducible Coxeter group. We consider the matrix $A = (a_{s, s'})$, where $a_{s, s'} = - \cos (\pi/m(s, s'))$. This is the matrix associated with the Bilinear form defined in \Cref{eqn-Coxeter-bilinear}. We will call this associate matrix of the Coxeter group. 
 
 Recall that a matrix $A \in M_n(\mathbb R)$ is called positive definite if $x^tAx >0$ for all $x$ and is called positive semi-definite if $x^tAx \ge 0$ for all $x$. We note that $A$ is positive definite (semi-definite) if and only if  all eigenvalues are real and positive (non-negative). This is equivalent to all principle minors are positive (non-negative).

 The following result summarizes the necessary property of the associated matrix $A$ of the Coxeter group.  
 \begin{theorem}
     The associated matrix of a finite Coxeter group is positive definite. 
 \end{theorem}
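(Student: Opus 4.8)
The plan is to exploit the finiteness of $W$ to produce a $W$-invariant positive-definite inner product on $V$, and then to recognize the form $B$ as a positive multiple of it via the irreducibility of the geometric representation. First, a reduction: if the Coxeter graph of $(W,S)$ is disconnected then, as noted above, $W\cong W_1\times W_2$ with $S=S_1\sqcup S_2$ and $m(s,s')=2$ for $s\in S_1,\,s'\in S_2$, so the associated matrix is block-diagonal with the associated matrices of $W_1$ and $W_2$ as blocks; since a block-diagonal symmetric matrix is positive definite exactly when each block is, it suffices to treat the irreducible case, which I henceforth assume. By the previous theorem we have a faithful homomorphism $\sigma\colon W\to\mathrm{GL}(V)$ with $\sigma(s)=\sigma_s$, so $\sigma(W)$ is a \emph{finite} subgroup of $\mathrm{GL}(V)$. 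Starting from any positive-definite inner product $\langle\,,\,\rangle_0$ on $V$ and averaging, $\langle u,v\rangle:=\tfrac{1}{|W|}\sum_{w\in W}\langle\sigma(w)u,\sigma(w)v\rangle_0$ is again positive definite and now $W$-invariant; in particular every $\sigma_s$ lies in $O(V,\langle\,,\,\rangle)$.

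Next I would prove that, in the irreducible (connected) case, $\sigma\colon W\to\mathrm{GL}(V)$ is an irreducible representation. Let $U\subseteq V$ be a nonzero $W$-invariant subspace; its orthogonal complement $U^{\perp}$ for $\langle\,,\,\rangle$ is also $W$-invariant and $V=U\oplus U^{\perp}$. Each $\sigma_s$ is a reflection — the $-1$-eigenspace is $\R\alpha_s$ and the $+1$-eigenspace is $\{v:B(\alpha_s,v)=0\}$ — preserving this decomposition, so $\alpha_s$ must lie in $U$ or in $U^{\perp}$. If $s,t\in S$ are joined by an edge, i.e.\ $m(s,t)\ge 3$, then $B(\alpha_s,\alpha_t)=-\cos(\pi/m(s,t))\neq 0$, and the identity $\sigma_s(\alpha_t)=\alpha_t-2B(\alpha_s,\alpha_t)\alpha_s$ shows $\alpha_s$ and $\alpha_t$ cannot lie on opposite sides of the splitting (otherwise $\sigma_s(\alpha_t)-\alpha_t$ would be a nonzero vector of $U\cap U^{\perp}$). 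Connectedness of the graph then forces all $\alpha_s$ into $U$ or all into $U^{\perp}$; since they span $V$, this gives $U=V$ or $U=\{0\}$.

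Then I would compare the two forms. Define $T\in\mathrm{End}(V)$ by $B(u,v)=\langle Tu,v\rangle$, which is legitimate since $\langle\,,\,\rangle$ is nondegenerate. Symmetry of $B$ makes $T$ self-adjoint for $\langle\,,\,\rangle$, so by the real spectral theorem $V$ is the orthogonal direct sum of $T$-eigenspaces with real eigenvalues; since $B$ and $\langle\,,\,\rangle$ are both $W$-invariant, $T$ commutes with every $\sigma(w)$, so each eigenspace is $W$-invariant, and by the irreducibility just established there is only one, i.e.\ $T=c\cdot\mathrm{Id}$ for a single $c\in\R$ and $B=c\langle\,,\,\rangle$. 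Evaluating on a simple root, $B(\alpha_s,\alpha_s)=-\cos(\pi/m(s,s))=-\cos\pi=1>0$ while $\langle\alpha_s,\alpha_s\rangle>0$, so $c>0$; hence $B$, and with it its Gram matrix $A=(a_{s,s'})$ in the basis $\{\alpha_s\}$, is positive definite, and the reducible case follows from the block decomposition above.

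The one step with genuinely Coxeter-theoretic content, rather than generic finite-group representation theory, is the irreducibility in the second paragraph, and the subtlety there is that $B$ itself may be degenerate a priori: one must carry out the orthogonal splitting with the averaged positive-definite form $\langle\,,\,\rangle$ and use $B$ only to record that adjacent simple roots are non-orthogonal. (A different route would induct on $|S|$ through the finite parabolic subgroups and apply Sylvester's criterion, but that needs a separate argument to fix the sign of $\det A$, which is less transparent, so I would favor the approach above.)
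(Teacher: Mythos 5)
Your argument is correct, but it takes a genuinely different route from the one the paper has in mind. The paper disposes of the theorem in one line: since a finite Coxeter group has already been realized as a finite reflection group acting on a Euclidean space $V$ with its standard inner product, and the simple system $\Delta$ is a basis of $V$ consisting of unit roots with $\langle\alpha,\beta\rangle=\cos(\pi-\pi/m(\alpha,\beta))=-\cos(\pi/m(\alpha,\beta))$, the associated matrix is literally the Gram matrix of a positive definite inner product in the basis $\Delta$, hence positive definite. That argument is shorter but leans on the realization theorem together with the (unstated) computation of the dihedral angle between two simple roots, which itself requires knowing that $\langle s_\alpha,s_\beta\rangle$ is dihedral of order exactly $2m(\alpha,\beta)$ and that simple roots are chosen to form an obtuse angle. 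Your proof instead works entirely on the abstract geometric representation: average to get a $W$-invariant positive definite form, prove irreducibility in the connected case by the reflection/adjacency argument, and invoke the spectral theorem plus Schur to conclude $B=c\langle\,,\,\rangle$ with $c=B(\alpha_s,\alpha_s)=1>0$. This is the standard textbook proof (Humphreys, \S 6.4); it is longer but self-contained modulo the faithfulness theorem, it correctly handles the a priori possibility that $B$ is degenerate by doing the orthogonal splitting with the averaged form rather than with $B$, and the irreducibility statement it establishes is independently useful. All the individual steps check out: the $-1$-eigenspace of $\sigma_s$ is exactly $\mathbb{R}\alpha_s$, so $\alpha_s$ cannot split nontrivially across $U\oplus U^{\perp}$; adjacency forces $\alpha_s,\alpha_t$ to the same side; and the reduction to the connected case via block-diagonality is sound.
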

 This follows because the associated matrix represents standard inner product with respect to the basis $\Delta$. 
\begin{definition}(positive definite Coxeter graph) We say a Coxeter graph is positive definite if the associated matrix is positive definite. 
	
	\end{definition} 
	We have proved above that a finite Coxeter group $(W, S)$ is positive definite. As a first step towards the classification, we proceed to understand the positive definite graphs.  For this we define the notion of a subgraph. 
	\begin{definition} (subgraph) Let $\Gamma$ be a graph. A graph obtained by either removing certain vertices and the adjacent edges or obtained by decreasing certain labels of $\Gamma$, or a combination of both will be called a subgraph of $\Gamma$. 
	\end{definition}
	The following result is very useful in completing the classification. 
	
	\begin{theorem}
		Any subgraph of a positive type connected Coxeter graph is positive definite.
	\end{theorem}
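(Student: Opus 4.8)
The plan is to reduce to, and then analyze, the only case that carries content: a \emph{connected} graph $\Gamma$ whose associated matrix $A=(a_{ss'})$, with $a_{ss}=1$ and $a_{ss'}=-\cos(\pi/m(s,s'))\le 0$ for $s\ne s'$, is positive semidefinite but \emph{singular} (this is how I read ``positive type'' in the degenerate situation; if $A$ is already positive definite there is nothing to prove, and ``subgraph'' is understood to be a proper one, since $\Gamma$ is a subgraph of itself). Recall that an edge of $\Gamma$ is present precisely when $a_{ss'}<0$, so connectedness of $\Gamma$ is exactly irreducibility of the off-diagonal pattern of $A$. Throughout I would use the standard fact that for a symmetric positive semidefinite matrix, $x^{t}Ax=0$ if and only if $Ax=0$.

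The heart of the argument, and the step I expect to be the main obstacle, is to pin down the radical of $A$. The claim is that $\ker A$ is one-dimensional and spanned by a vector $v$ with all coordinates strictly positive. I would prove this by a Perron--Frobenius-type argument adapted to the sign pattern $a_{ss'}\le 0$. Given $0\ne w\in\ker A$, the inequality
\[
|w|^{t}A|w|=\sum_s w_s^2+\sum_{s\ne s'}a_{ss'}|w_s||w_{s'}|\ \le\ \sum_s w_s^2+\sum_{s\ne s'}a_{ss'}w_sw_{s'}=w^{t}Aw=0
\]
(valid because $a_{ss'}\le 0$ and $|w_sw_{s'}|\ge w_sw_{s'}$) together with positive semidefiniteness forces $|w|^{t}A|w|=0$, hence $|w|\in\ker A$. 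Reading off the $s$-th coordinate of $A|w|=0$ gives $|w_s|=\sum_{s'\ne s}\cos(\pi/m(s,s'))\,|w_{s'}|$, a sum of nonnegative terms; if some $|w_s|=0$ then every neighbour $s'$ of $s$ has $|w_{s'}|=0$, and connectedness propagates this to force $w=0$. Thus every nonzero kernel vector has strictly positive absolute value. One-dimensionality then follows: if $\dim\ker A\ge 2$ I can choose $w\in\ker A$ with $w\perp v$; since $v>0$ this $w$ changes sign, so $u:=|w|-w$ is a nonzero element of $\ker A$ possessing a zero coordinate, contradicting what was just shown.

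Granting this, upgrading ``semidefinite'' to ``definite'' on a proper subgraph $\Gamma'$ is comparatively routine, and I would do it by two monotonicity inequalities. Let $S'\subseteq S$ be the vertices of $\Gamma'$ and $A'$ its associated matrix; passing from $A$ to $A'$ deletes the rows and columns outside $S'$ and replaces each off-diagonal entry $a_{ss'}$ by some $a'_{ss'}$ with $a_{ss'}\le a'_{ss'}\le 0$ (decreasing a label makes the cosine smaller, i.e.\ the entry larger). For $x\in\mathbb{R}^{S'}$, extend it by zeros to $\tilde x\in\mathbb{R}^{S}$; then
\[
x^{t}A'x\ \ge\ |x|^{t}A'|x|\ \ge\ |\tilde x|^{t}A|\tilde x|\ \ge\ 0,
\]
where the first inequality is the sign-pattern estimate from above, the second uses $a'_{ss'}\ge a_{ss'}$ with $|x_s||x_{s'}|\ge 0$ (the diagonal entries are unchanged), and the third is positive semidefiniteness of $A$. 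If $x\ne 0$ but $x^{t}A'x\le 0$, all three inequalities are equalities; the equality $|\tilde x|^{t}A|\tilde x|=0$ forces $|\tilde x|\in\ker A=\mathbb{R}v$, so $|\tilde x|=cv$ with $c>0$, i.e.\ $\tilde x$ has \emph{all} coordinates nonzero. This already contradicts $\tilde x$ vanishing off $S'$ whenever a vertex was deleted ($S'\subsetneq S$); and if instead only labels were decreased, equality in the middle inequality forces $(a'_{pq}-a_{pq})|x_p||x_q|=0$ on every changed edge $\{p,q\}$, which is impossible since $a'_{pq}-a_{pq}>0$ and $|x_p||x_q|=c^2v_pv_q>0$. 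Either way $A'$ is positive definite, which is the assertion. The only genuinely delicate input is the strict positivity and uniqueness of the radical vector $v$; once that is in hand, definiteness of every proper subgraph is a direct consequence of the two comparison inequalities.
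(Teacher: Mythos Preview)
The paper does not include a proof of this theorem; it is stated without argument as a tool for the classification, with the paper deferring to Humphreys \cite{MR1066460} for details. Your proof is correct and is essentially the standard argument found there: the Perron--Frobenius-type analysis showing that, for a connected graph with nonpositive off-diagonal entries, the radical of the positive semidefinite form is at most one-dimensional and spanned by a strictly positive vector, followed by the two monotonicity comparisons $x^{t}A'x\ge |x|^{t}A'|x|\ge |\tilde x|^{t}A|\tilde x|\ge 0$ and the equality analysis.

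One small remark on interpretation: the paper only defines ``positive definite'' graphs, so if ``positive type'' is read that way, the kernel analysis is unnecessary---your inequality chain already gives $x^{t}A'x\ge |\tilde x|^{t}A|\tilde x|>0$ for $x\ne 0$ directly. Your more general reading (positive semidefinite, with proper subgraph) is the version actually required for the classification in the cited reference, and you handle it correctly. The one-dimensionality argument via $u=|w|-w$ is clean and complete.
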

Consider the family of all graphs given below. We call this to be the family of $\mathrm{Dynkin}^+$ graphs.

$A_n(n \ge 1)$ \hspace{0.4cm}
\begin{tikzpicture}
	\draw (0,0) -- (2,0);
	\draw[loosely dotted] (2,0) -- (3,0);
	\draw (3,0) -- (4,0);
	\fill (0,0) circle(2pt);
	\fill (0,0) circle(2pt);
	\fill (1,0) circle(2pt);
	\fill (2,0) circle(2pt);
	\fill (3,0) circle(2pt);
	\fill (4,0) circle(2pt);
\end{tikzpicture}
\vspace{0.3cm}

$B_n(n \ge 2)$ \hspace{0.4cm}
\begin{tikzpicture}
	\draw (-2,0) -- (0,0);
	\draw[loosely dotted] (0,0) -- (1,0);
	\draw (1,0) -- (2,0);
	\draw (2,0) -- (3,0)node[midway,above]{$4$};
	\fill (0,0) circle(2pt);
	\fill (-2,0) circle(2pt);
	\fill (-1,0) circle(2pt);
	\fill (1,0) circle(2pt);
	\fill (2,0) circle(2pt);
	\fill (3,0) circle(2pt);
\end{tikzpicture}
\vspace{0.3cm}

$D_n(n \ge 4)$ \hspace{0.7cm}
\begin{tikzpicture}
	\draw (-2,0) -- (0,0);
	\draw[loosely dotted] (0,0) -- (1,0);
	\draw (1,0) -- (2,0);
	\draw (2,0) -- (3,-0.5);
	\draw (2,0) -- (3,0.5);
	\fill (0,0) circle(2pt);
	\fill (-2,0) circle(2pt);
	\fill (-1,0) circle(2pt);
	\fill (1,0) circle(2pt);
	\fill (2,0) circle(2pt);
	\fill (3,-0.5) circle(2pt);
	\fill (3,0.5) circle(2pt);
\end{tikzpicture}
\vspace{0.3cm}

$E_6$
\hspace{2.3cm}
\begin{tikzpicture}
	\draw (0,0) -- (4,0);
	\draw (2,0) -- (2,1);
	\fill (0,0) circle(2pt);
	\fill (1,0) circle(2pt);
	\fill (2,0) circle(2pt);
	\fill (2,1) circle(2pt);
	\fill (3,0) circle(2pt);
	\fill (4,0) circle(2pt);
\end{tikzpicture}
\vspace{0.3cm}

$E_7$
\hspace{2.3cm}
\begin{tikzpicture}
	\draw (0,0) -- (5,0);
	\draw (2,0) -- (2,1);
	\fill (0,0) circle(2pt);
	\fill (1,0) circle(2pt);
	\fill (2,0) circle(2pt);
	\fill (2,1) circle(2pt);
	\fill (3,0) circle(2pt);
	\fill (4,0) circle(2pt);
	\fill (5,0) circle(2pt);
\end{tikzpicture}

\vspace{0.3cm}
$E_8$
\hspace{2.3cm}
\begin{tikzpicture}
	\draw (0,0) -- (6,0);
	\draw (2,0) -- (2,1);
	\fill (0,0) circle(2pt);
	\fill (1,0) circle(2pt);
	\fill (2,0) circle(2pt);
	\fill (2,1) circle(2pt);
	\fill (3,0) circle(2pt);
	\fill (4,0) circle(2pt);
	\fill (5,0) circle(2pt);
	\fill (6,0) circle(2pt);
\end{tikzpicture}
\vspace{0.3cm}

$F_4$
\hspace{2.3cm}
\begin{tikzpicture}
	\draw (0,0) -- (1,0);
	\draw (1,0) -- (2,0) node [midway,above]{$4$};
	\draw (2,0) -- (3,0);
	\fill (0,0) circle(2pt);
	\fill (1,0) circle(2pt);
	\fill (2,0) circle(2pt);
	\fill (3,0) circle(2pt);
\end{tikzpicture}
\vspace{0.3cm} 

$H_3$
\hspace{2.3cm}
\begin{tikzpicture}
	\draw (0,0) -- (1,0)node[midway,above]{$5$};
	\draw (1,0) -- (2,0) ;
	\fill (0,0) circle(2pt);
	\fill (1,0) circle(2pt);
	\fill (2,0) circle(2pt);
\end{tikzpicture}
\vspace{0.3cm}

$H_4$
\hspace{2.3cm}
\begin{tikzpicture}
	\draw (0,0) -- (1,0)node[midway,above]{$5$};
	\draw (1,0) -- (3,0) ;
	\fill (0,0) circle(2pt);
	\fill (1,0) circle(2pt);
	\fill (2,0) circle(2pt);
	\fill (3,0) circle(2pt);
\end{tikzpicture}
\vspace{0.3cm}

$I_2(m)$
\hspace{2.3cm}
\begin{tikzpicture}
	\draw (0,0) -- (1,0)node [midway,above]{$m$};
	\fill (0,0) circle(2pt);
	\fill (1,0) circle(2pt);
\end{tikzpicture}
\vspace{0.3cm}

\begin{theorem}
\begin{enumerate} 
\item The $\mathrm{Dynkin}^+$ family is closed under taking the subgraphs. 
\item All graphs in the $\mathrm{Dynkin}^+$  family are positive definite. 
\end{enumerate}  
\end{theorem}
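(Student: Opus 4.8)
The plan is to prove the two parts in tandem, since the closure statement (1) is precisely what powers the induction in (2). \emph{For part (1)}, I would first observe that every subgraph of a graph $\Gamma$ is obtained from $\Gamma$ by finitely many elementary moves of two kinds: deleting one vertex together with its incident edges, and lowering one edge-label by $1$ (a label $3$ lowered to $2$ meaning the edge disappears). So it suffices to check that each elementary move sends a member of the $\mathrm{Dynkin}^+$ list to a disjoint union of members of the list — reading the family as closed under disjoint union, equivalently checking that each connected component of the result is on the list. Lowering the unique label $\ge 4$ is immediate: $B_n\mapsto A_n$, $F_4\mapsto A_4$, $H_3\mapsto B_3$ (or $A_3$), $H_4\mapsto B_4$ (or $A_4$), $I_2(m)\mapsto I_2(m-1)$ (or $A_1\sqcup A_1$ when $m=3$). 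Deleting a vertex of a tree yields a forest, so only the shapes of the components need checking; for the infinite families this is a one-line induction (deleting a vertex of $A_n$ gives $A_j\sqcup A_{n-1-j}$; of $B_n$ gives $A_{n-1}$ or $A_j\sqcup B_{n-1-j}$ with $B_1=A_1$; of $D_n$ gives $A_{n-1}$, $A_j\sqcup D_{n-1-j}$, or $A_{n-3}\sqcup A_1\sqcup A_1$), and for the finitely many exceptional graphs $E_6,E_7,E_8,F_4,H_3,H_4,I_2(m)$ it is a finite inspection, using repeatedly that the $E$-series nests and that deleting a spine-end of $E_n$ gives $D_n$ while deleting a fork neighbour gives $A_{n-1}$.

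\emph{For part (2)} I would induct on the number $n$ of vertices; the case $n=1$ is the matrix $(1)$. Let $\Gamma$ have $n$ vertices and let $A=A_\Gamma$ be its associated matrix (if $\Gamma$ is disconnected, $A$ is block diagonal and is positive definite iff each block is, so we may assume $\Gamma$ connected). By part (1), deleting any vertex of $\Gamma$ gives a disjoint union of $\mathrm{Dynkin}^+$ graphs on $n-1$ vertices, whose associated matrix is block diagonal with blocks positive definite by the inductive hypothesis, hence is itself positive definite. Thus every $(n-1)\times(n-1)$ principal submatrix of $A$ is positive definite. By the Cauchy interlacing theorem, the second smallest eigenvalue of $A$ is at least the smallest eigenvalue of such a submatrix, hence $>0$; so $A$ has at most one non-positive eigenvalue, and therefore $A$ is positive definite if and only if $\det A>0$. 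It then remains to verify $\det A_\Gamma>0$ for every $\Gamma$ on the list.

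For the determinants: for $A_n$ and $B_n$ the matrix is tridiagonal with off-diagonal entries $-\tfrac12$ except the single entry $-\cos(\pi/4)$ in the $B_n$ case, and the determinant obeys the two-term recursion $d_n=d_{n-1}-\tfrac14 d_{n-2}$ (the modification $\cos^2(\pi/4)=\tfrac12$ entering once for $B_n$), giving $\det A_{A_n}=(n+1)/2^n$ and $\det A_{B_n}=1/2^{\,n-1}$. Expanding $D_n$ and the $E$-series along the branch vertex and feeding in the $A$-series values gives $\det A_{D_n}=1/2^{\,n-2}$, $\det A_{E_6}=3/2^6$, $\det A_{E_7}=1/2^6$, $\det A_{E_8}=1/2^8$, $\det A_{F_4}=1/2^4$. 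For the non-crystallographic graphs one computes directly $\det A_{I_2(m)}=\sin^2(\pi/m)>0$ for finite $m$, $\det A_{H_3}=(3-\sqrt5)/8>0$, and $\det A_{H_4}=(7-3\sqrt5)/32>0$. All are positive, which closes the induction. (For $A_n,B_n,D_n$ one may instead bypass the recursion entirely by exhibiting linearly independent vectors $\epsilon_i-\epsilon_{i+1}$, together with $\epsilon_n$ respectively $\epsilon_{n-1}+\epsilon_n$, whose normalized Gram matrix is $A_\Gamma$; a Gram matrix of linearly independent vectors is automatically positive definite.)

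I expect the principal difficulty to be bookkeeping rather than a single hard idea: in part (1) one must be certain the case analysis is exhaustive and that no vertex deletion ever produces a forbidden shape (a cycle, a vertex of degree $\ge 4$, a second branch vertex, or an edge-label in the wrong place or too large), and in part (2) one must carry the determinant evaluations for $E_8$, $F_4$, and $H_4$ through cleanly. The conceptual kernel — interlacing together with positivity of the determinant forces positive-definiteness — is only a couple of lines; everything else is verification.
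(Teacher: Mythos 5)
Your argument is correct and complete. The paper itself states this theorem without proof (it defers to Humphreys \cite{MR1066460}), so there is no in-text argument to compare against; but what you have written is essentially the standard proof from that reference: closure under the two elementary subgraph moves is a finite (if tedious) case check, and positive definiteness is reduced by induction to the positivity of $\det A_\Gamma$, which is then computed via the tridiagonal recursion $d_n = d_{n-1} - c^2 d_{n-2}$ and expansion at the branch vertex. I verified your determinant values ($ (n+1)/2^n$, $1/2^{n-1}$, $1/2^{n-2}$, $3/2^6$, $1/2^6$, $1/2^8$, $1/2^4$, $\sin^2(\pi/m)$, $(3-\sqrt5)/8$, $(7-3\sqrt5)/32$) and they are all right. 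Two minor remarks: the reduction ``all proper principal minors positive definite $+$ $\det A>0$ $\Rightarrow$ positive definite'' can be done with Sylvester's criterion on leading principal minors alone, so Cauchy interlacing, while perfectly valid, is slightly more machinery than needed; and in part (1) you are implicitly treating the $\mathrm{Dynkin}^+$ list as closed under disjoint union, which is the intended reading (the paper's classification theorem concerns connected graphs, and the associated matrix of a disjoint union is block diagonal), but it is worth saying explicitly since the theorem as stated speaks of ``the family.'' The Gram-matrix shortcut you mention for types $A_n$, $B_n$, $D_n$ is also correct and is how the paper justifies the earlier theorem that the associated matrix of a finite reflection group is positive definite.
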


We now give a family of non-positive graphs. This family is parallel to $\mathrm{Dynkin}^+$  family, and can be easily checked that the determinant of the associated matrices is zero, and hence they are not positive definite.

\begin{theorem}
The following graphs are  not positive definite.

  \vspace{0.7cm}
$\tilde{A_1}$
\hspace{2.3cm}
   \begin{tikzpicture}
 \draw (0,0) -- (1,0)node [midway,above]{$ \infty$};
      \fill (0,0) circle(2pt);
    \fill (1,0) circle(2pt);
    \end{tikzpicture}
  \vspace{0.7cm} 

$\tilde{A_n}$
  \hspace{2.3cm}
   \begin{tikzpicture}
   \draw (0,0) -- (1,0);
   \draw[loosely dotted] (1,0) -- (2,0);
 \draw (2,0) -- (3,0) ;
 \draw (0,0) -- (1.5,1);
 \draw (3,0) -- (1.5,1);
  \fill (0,0) circle(2pt);
    \fill (1,0) circle(2pt);
    \fill (2,0) circle(2pt);
   \fill (3,0) circle(2pt);
   \fill (1.5,1) circle(2pt);
     \end{tikzpicture} 
  \vspace{0.7cm}

$\tilde{B_2}=\tilde{C_2}$
  \hspace{1.3cm}
   \begin{tikzpicture}
   \draw (0,0) -- (1,0)node[midway,above]{$ 4$};
   \draw (1,0) -- (2,0)node[midway,above]{$ 4$};
 \fill (0,0) circle(2pt);
    \fill (1,0) circle(2pt);
    \fill (2,0) circle(2pt);
   \end{tikzpicture} 
  \vspace{0.7cm}

$\tilde{B_n}(n \ge 3)$
  \hspace{1.3cm}
   \begin{tikzpicture}
   \draw (0,0) -- (1,0);
   \draw (1,0) -- (2,0);
   \draw[loosely dotted] (2,0) -- (3,0);
   \draw (3,0) -- (4,0)node[midway,above]{$ 4$};
   \draw (0,0) -- (-1,-0.5);
   \draw (0,0) -- (-1,0.5);
   \fill (0,0) circle(2pt);
    \fill (1,0) circle(2pt);
    \fill (2,0) circle(2pt);
    \fill (3,0) circle(2pt);
    \fill (4,0) circle(2pt);
    \fill (-1,0.5) circle(2pt);
    \fill (-1,-0.5) circle(2pt);
   \end{tikzpicture}
  \vspace{0.7cm}

  $\tilde{C_n}(n \ge 3)$ 
   \hspace{1.3cm}
\begin{tikzpicture}
 \draw (-2,0) -- (-1,0)node[midway,above]{$4$};
  \draw (-1,0) -- (0,0);
     \draw[loosely dotted] (0,0) -- (1,0);
     \draw (1,0) -- (2,0);
     \draw (2,0) -- (3,0)node[midway,above]{$4$};
     \fill (0,0) circle(2pt);
    \fill (-2,0) circle(2pt);
    \fill (-1,0) circle(2pt);
     \fill (3,0) circle(2pt);
     \fill (2,0) circle(2pt);
     \fill (1,0) circle(2pt);
    \end{tikzpicture}
  \vspace{0.7cm}

$\tilde{D_n}(n \ge 3)$
\hspace{1.3cm}
   \begin{tikzpicture}
   \draw (0,0) -- (1,0);
   \draw (1,0) -- (2,0)[loosely dotted];
   \draw (2,0) -- (3,0);
   \draw (3,0) -- (4,-0.5);
   \draw (3,0) -- (4,0.5);
   \draw (0,0) -- (-1,-0.5);
   \draw (0,0) -- (-1,0.5);
   \fill (0,0) circle(2pt);
    \fill (1,0) circle(2pt);
    \fill (2,0) circle(2pt);
    \fill (3,0) circle(2pt);
    \fill (4,0.5) circle(2pt);
    \fill (4,-0.5) circle(2pt);
    \fill (-1,0.5) circle(2pt);
    \fill (-1,-0.5) circle(2pt);
   \end{tikzpicture}
  \vspace{0.7cm}

$\tilde{E_6}$
\hspace{2.3cm}
   \begin{tikzpicture}
 \draw (0,0) -- (4,0);
      \draw (2,0) -- (2,2);
    \fill (0,0) circle(2pt);
    \fill (1,0) circle(2pt);
    \fill (2,0) circle(2pt);
    \fill (2,1) circle(2pt);
    \fill (3,0) circle(2pt);
     \fill (4,0) circle(2pt);
      \fill (2,2) circle(2pt);
 \end{tikzpicture}
  \vspace{0.7cm}

$\tilde{E_7}$
\hspace{2.3cm}
   \begin{tikzpicture}
 \draw (-1,0) -- (5,0);
      \draw (2,0) -- (2,1);
      \fill (0,0) circle(2pt);
    \fill (-1,0) circle(2pt);
    \fill (1,0) circle(2pt);
    \fill (2,0) circle(2pt);
    \fill (2,1) circle(2pt);
    \fill (3,0) circle(2pt);
     \fill (4,0) circle(2pt);
     \fill (5,0) circle(2pt);
 \end{tikzpicture}
  \vspace{0.7cm}

$\tilde{E_8}$
\hspace{2.3cm}
   \begin{tikzpicture}
 \draw (0,0) -- (7,0);
      \draw (2,0) -- (2,1);
    \fill (0,0) circle(2pt);
    \fill (1,0) circle(2pt);
    \fill (2,0) circle(2pt);
    \fill (2,1) circle(2pt);
    \fill (3,0) circle(2pt);
     \fill (4,0) circle(2pt);
     \fill (5,0) circle(2pt);
     \fill (6,0) circle(2pt);
     \fill (7,0) circle(2pt);
 \end{tikzpicture}
  \vspace{0.7cm}

$\tilde{F_4}$
\hspace{2.3cm}
   \begin{tikzpicture}
 \draw (0,0) -- (2,0);
  \draw (2,0) -- (3,0) node [midway,above]{$4$};
   \draw (3,0) -- (4,0);
      \fill (0,0) circle(2pt);
    \fill (1,0) circle(2pt);
    \fill (2,0) circle(2pt);
   \fill (3,0) circle(2pt);
    \fill (4,0) circle(2pt);
     \end{tikzpicture}
  \vspace{0.7cm}

$\tilde{G_2}$
 \hspace{2.3cm}
   \begin{tikzpicture}
 \draw (0,0) -- (1,0);
  \draw (1,0) -- (2,0) node [midway,above]{$6$};
  \fill (0,0) circle(2pt);
    \fill (1,0) circle(2pt);
    \fill (2,0) circle(2pt);
  \end{tikzpicture}
  \vspace{0.7cm}  
\end{theorem}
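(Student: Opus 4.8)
The plan is to give, for each graph $\Gamma$ in the list, an explicit nonzero vector $v$ in the kernel of its associated matrix $A=(a_{s,s'})$ (where $a_{s,s}=1$ and $a_{s,s'}=-\cos(\pi/m(s,s'))$ for $s\ne s'$). This settles the claim at once: a positive definite symmetric matrix has all eigenvalues positive, hence strictly positive determinant, so $Av=0$ with $v\ne 0$ forces $\det A=0$ (the fact announced before the statement) and in particular shows $A$ is not positive definite, since $v^{t}Av=0$. Thus there is nothing to do beyond producing these kernel vectors and checking $Av=0$.

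The one conceptual point is to rewrite $Av=0$ in usable form. Writing $v=(n_s)_{s\in S}$ and letting $s'\sim s$ mean ``$s'$ is adjacent to $s$ in $\Gamma$'', the $s$-th coordinate of $Av$ vanishes precisely when
\[
n_s=\sum_{s'\sim s}\cos\!\big(\pi/m(s,s')\big)\,n_{s'},
\]
a \emph{local balance relation} at the vertex $s$. Only four values of $\cos(\pi/m)$ occur in the whole list: $\cos(\pi/3)=\tfrac12$, $\cos(\pi/4)=\tfrac{\sqrt2}{2}$, $\cos(\pi/6)=\tfrac{\sqrt3}{2}$, and $\cos(\pi/\infty)=\cos 0=1$ (the last only for $\tilde{A}_1$). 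So, for each $\Gamma$, it remains to write down a list of positive ``marks'' $(n_s)$ and to verify the balance relation at each \emph{type} of vertex occurring in $\Gamma$.

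For the sporadic graphs this is a finite arithmetic check: for $\tilde{A}_1$, $A$ has $1$ on the diagonal and $-1$ off it, with kernel vector $(1,1)$; for $\tilde{B}_2=\tilde{C}_2$ take $(1,\sqrt2,1)$; for $\tilde{G}_2$ take $(1,2,\sqrt3)$ (with the $m=6$ edge at the appropriate end); and $\tilde{F}_4,\tilde{E}_6,\tilde{E}_7,\tilde{E}_8$ are handled by their analogous short mark-lists, each verified vertex by vertex. For the infinite families I would give $n$-independent marks and check the finitely many vertex types that occur: on the $(n{+}1)$-cycle $\tilde{A}_n$ take $v=(1,\dots,1)$, where every vertex is bivalent with both edges unlabeled and the relation reads $1=\tfrac12+\tfrac12$; on $\tilde{D}_n$ take the double-fork vector $(1,1,2,\dots,2,1,1)$, where one checks a trivalent (fork) vertex and an interior bivalent spine vertex; on $\tilde{B}_n$ take $(1,1,2,\dots,2,\sqrt2)$ and on $\tilde{C}_n$ take $(1,\sqrt2,\dots,\sqrt2,1)$, the only special vertices being the fork or the $m=4$-endpoint, the bivalent vertex incident to an $m=4$ edge, and a generic spine vertex. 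In each family the verification does not depend on $n$.

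As this indicates, there is no real obstacle: the content is the reformulation $Av=0\iff$ ``marks balance'', after which every case reduces to arithmetic with $\tfrac12,\tfrac{\sqrt2}{2},\tfrac{\sqrt3}{2}$. The only care needed is bookkeeping -- matching each drawn vertex with the right coordinate of $v$, orienting the labeled edges correctly, and checking the relation at the few distinguished vertices (forks, endpoints of labeled edges, the trivalent vertex of the $\tilde{E}$- and $\tilde{D}$-type graphs). One may note in addition that all these mark vectors can be taken with strictly positive entries, which is what places these graphs exactly at the ``affine/zero type'' boundary; this positivity is not needed for the present statement. Alternatively, for the classical families one could run a determinant recursion along the spine and watch it collapse to $0$, but the explicit kernel vectors give a shorter and uniform argument.
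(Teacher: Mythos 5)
Your proposal is correct and follows essentially the same route as the paper, which simply asserts that the determinant of each associated matrix vanishes (hence the matrix cannot be positive definite); exhibiting an explicit null vector $v$ with $Av=0$, so that $v^{t}Av=0$, is just a concrete way of carrying out that check, and your balance relations and mark vectors are all verifiable as stated. In effect you have supplied the computation that the paper leaves as "easily checked."
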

We are now in a position to state the classification result for all irreducible Coxeter groups. 
\begin{theorem}
	Let $G$ be any irreducible Coxeter group. Then the Coxeter graph of $G$ is a $\mathrm{Dynkin}^+$  graph. Conversely, for every $\mathrm{Dynkin}^+$  graph $\Gamma$ there exists a Coxeter group $W$ with Coxeter graph $\Gamma$.  
\end{theorem}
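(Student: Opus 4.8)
I would prove the two implications separately: the forward direction reduces, via the results already in hand, to a purely combinatorial classification of connected positive definite Coxeter graphs, and the converse reduces to the single analytic fact that a Coxeter group whose bilinear form is positive definite is finite. For the forward direction, let $G$ be a finite irreducible Coxeter group with Coxeter graph $\Gamma$. Irreducibility means $\Gamma$ is connected, and by the theorem that the associated matrix of a finite Coxeter group is positive definite, $\Gamma$ is a connected positive definite Coxeter graph. By the theorem that every subgraph of a positive definite connected Coxeter graph is again positive definite, $\Gamma$ can contain no non-positive-definite graph as a subgraph; in particular it contains none of the affine diagrams $\tilde A_n,\tilde B_n,\tilde C_n,\tilde D_n,\tilde E_6,\tilde E_7,\tilde E_8,\tilde F_4,\tilde G_2$ of the previous theorem, nor (after a handful of direct determinant computations) any of a few further small graphs carrying a label $5$ in an interior position. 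I would then read off structural constraints on $\Gamma$ one at a time: it is a tree with no edge labelled $\infty$ (else it contains a cycle $\tilde A_n$, or $\tilde A_1$); every vertex has degree at most $3$ and at most one has degree $3$ (else a degree-$4$ star $\tilde D_4$ or a two-branch diagram $\tilde D_n$ appears); at most one edge has label $\ge 4$ and, once $|S|\ge 3$, no label exceeds $5$ (from $\tilde B_2$, $\tilde C_n$, $\tilde G_2$); a branch vertex forbids any label $\ge 4$ (from $\tilde B_n$); a label-$4$ edge must sit at the end of a path, or one step from it in a $4$-vertex path, and a label-$5$ edge must sit at the end of a path of at most $4$ vertices (from $\tilde F_4$ and the extra computations); and if $\Gamma$ has a branch vertex then one of its three arms has length $1$ while the other two have lengths at most $2$ and $4$ (from $\tilde E_6,\tilde E_7,\tilde E_8$). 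Running through these cases — branch vertex or not; highest label $3$, $4$ or $5$ — leaves exactly $A_n$, $B_n$, $D_n$, $E_6$, $E_7$, $E_8$, $F_4$, $H_3$, $H_4$ and $I_2(m)$ (with $I_2(6)=G_2$, $I_2(4)=B_2$, $I_2(3)=A_2$), i.e. precisely the $\mathrm{Dynkin}^+$ family, which establishes the first assertion.

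\textbf{Converse.} Given a $\mathrm{Dynkin}^+$ graph $\Gamma$, let $W=W_\Gamma$ be the Coxeter group with that presentation and let $(V,B)$, $\sigma\colon W\to\mathrm{GL}(V)$ be its geometric representation as in \Cref{eqn-Coxeter-bilinear}. By the theorem that every $\mathrm{Dynkin}^+$ graph is positive definite, $B$ is positive definite, so $(V,B)$ is a genuine Euclidean space and $\sigma(W)$ lies in the compact group $O(V,B)$. It remains to see that $W$ is finite. The fundamental chamber $C=\{v\in V:B(v,\alpha_s)>0\ \text{for all}\ s\in S\}$ is a non-empty open cone (since $B$ is non-degenerate and the $\alpha_s$ form a basis, one can solve $B(v,\alpha_s)=1$ for all $s$), its $W$-translates $\sigma(w)C$ are pairwise disjoint by the standard properties of the geometric representation, and $O(V,B)$ preserves surface measure on the unit sphere $S$; hence the sets $\sigma(w)C\cap S$ are pairwise disjoint open subsets of $S$, all of the same positive measure, so there can be only finitely many, and, $\sigma$ being faithful, $W$ is finite. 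Finally $W$ is irreducible because $\Gamma$ is connected, and its Coxeter graph is $\Gamma$ by construction. One could instead avoid the finiteness lemma by exhibiting a concrete finite reflection group realizing each graph — $S_n$ for $A_{n-1}$, the hyperoctahedral group for $B_n$, its even-sign-change subgroup for $D_n$, the dihedral group of order $2m$ for $I_2(m)$, and the symmetry groups of the $24$-cell, the icosahedron, the $600$-cell and the $E_8$ root polytope for $F_4$, $H_3$, $H_4$ and $E_6,E_7,E_8$ — and checking its Coxeter graph directly.

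\textbf{Main obstacle.} The heart of the argument is the finite case analysis in the forward direction: converting the forbidden-subgraph list into the exact $\mathrm{Dynkin}^+$ list requires carefully bounding the arm lengths around a branch vertex and the location of a label-$\ge 4$ edge, and one must also notice that the affine family alone does not control label-$5$ edges, so a few extra small determinants (for instance, a path of five vertices with an end edge labelled $5$, or a branch vertex together with a label-$5$ edge) have to be computed by hand. On the converse side the only non-formal step is the implication ``$B$ positive definite $\Rightarrow$ $W$ finite'', which — in contrast to the implication already proved in the text — needs the chamber/measure argument sketched above (together with the disjointness of the $W$-translates of $C$, itself part of the basic theory of the geometric representation).
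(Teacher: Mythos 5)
Your proposal is correct and follows exactly the route the paper sets up: the preceding theorems (positive definiteness of the associated matrix, closure of positive definiteness under subgraphs, non-positive-definiteness of the affine diagrams, and positive definiteness of the $\mathrm{Dynkin}^+$ family) are precisely the ingredients for your forbidden-subgraph case analysis, and the paper itself states the classification without writing out that analysis or the converse, deferring to \cite{MR1066460}. You supply more detail than the text does, and you correctly flag the two points the scaffolding alone does not cover — the extra non-affine determinant computations needed to bound label-$5$ edges, and the fact that ``$B$ positive definite $\Rightarrow W$ finite'' (or, alternatively, exhibiting concrete reflection groups for each diagram, which is what the paper gestures at) is genuinely needed for the converse.
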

The Dynkin diagram $A_n$ corresponds to the Symmetric group $S_{n+1}$, $B_n$ corresponds to the Hyperoctahedral groups and $I_2(m)$ correspond to $\mathrm{Sym}^{m-gon}.$ We now give Coxeter groups corresponding to the Dynkin diagram $D_n$. For Coxeter groups corresponding to remaining Dynking  diagrams, i.e. $E_6, E_7, E_8, F_4, H_3$ and $H_4$, see \cite{MR1066460}. 
\subsection{Groups of type $D_n$}
We have 
\[
(\mathbb{Z}/2\mathbb{Z})^n \cong \{\left(\begin{smallmatrix}
    a_1 & 0 & \dots & 0\\
    0 & a_2 & \dots & 0\\
    \dots & \dots & \dots & \dots\\
    0 & 0 & \dots & a_n
\end{smallmatrix}\right) \mid a_i\in \{\pm 1\}\}
\]
Define a map $\mathrm{det} \colon (\mathbb{Z}/2\mathbb{Z})^n \longrightarrow \{\pm 1\}$. Let $K=\mathrm{Ker}(\mathrm{det})$. 
\bigskip
\begin{definition}[The group $D_n$] For an integer $n\geq 4$, the hyperoctahedral group $D_n$ is defined as the index $2$ subgroup of $B_n$ defined by $K \rtimes S_n$. A presentation of this group is as follows:

\begin{tabular}{ccc}
$D_n$ & = & $\langle x_1,x_2,\ldots,x_n \mid x_i^2=1,\,\, 1 \le i \le n ;\,\, (x_ix_{i+1})^3=1,\,\,  2 \le i \le n-1;$  \\ 
& & $(x_ix_j)^2=1, \,\, |i-j| \neq 1 \text{ and } (i,j) \neq (1,3);  \,\, (x_1x_2)^2=(x_1x_3)^3=1 \rangle.$
\end{tabular} 

It is easy to that a Coxeter graph of $D_n$ with the above presentation is as follows: 

\begin{tikzpicture}
	\draw(-4,0) -- (-4,1);
	\draw (-6,0) -- (0,0);
	\draw[loosely dotted] (0,0) -- (4,0);
	\draw (4,0) -- (6,0);
	\fill (-6,0) circle(2pt)node[anchor=north]{$x_2$};
	\fill (-4,0) circle(2pt)node[anchor=north]{$x_3$};
	\fill (-4,1) circle(2pt)node[anchor=south]{$x_1$};
	\fill (-2,0) circle(2pt)node[anchor=north]{$x_4$};
	\fill (0,0) circle(2pt)node[anchor=north]{$x_5$};
	\fill (4,0) circle(2pt)node[anchor=north]{$x_{n-1}$};
	\fill (6,0) circle(2pt)node[anchor=north]{$x_{n}$};
\end{tikzpicture}

\end{definition}

\section{Representation theory of finite Coxeter groups} 
\subsection{Basics of representation theory}

    \begin{definition}(Representation of a group)
        Let $G$ be a finite group, $V$ a finite dimensional complex vector space. A complex \textbf{representation} of $G$ is a group homomorphism $\phi\colon G\to \gl (V)$. 
    \end{definition}
We denote a representation $\phi: G \rightarrow \mathrm{GL}(V)$ of $G$ by $(\phi, V)$. We will also denote this either by $\phi$ or $V$, whenever the meaning is clear from the context.  The integer $\dim V$ is called the \textbf{degree} of the representation $\phi$.
We now give a few basic example of the representations of a finite group. 
\begin{example}
\label{example-trivial}
     For any group $G$, the map $\phi : G \rightarrow \mathbb C^{\star}$, given by $\phi(g) = 1$ for all $g$. 
\end{example}
This is a one dimension representation of $G$, called the {\bf trivial representation} of $G$. 
\begin{example}
\label{example:cyclic}
    Let $C_n$ be a cyclic group with $n$ elements. Let $\phi : C_n \rightarrow \mathbb C^{\times}$ be a homomorphism, then 
\[ \phi(x)^n = 1 \,\,\mathrm{for\,\, all}\,\, x \in C_n. \] 
Let $\omega$ be a generator of $C_n$ and $\zeta_n$ be a fixed $n^{th}$ primitive root of $1$. 
Then $\phi$ is completely determined by $\phi(\omega)$. Let $\phi_i(\omega) = \zeta_n^i$  for $1 \leq i \leq n$. Thus the $\phi_i$ are distinct one dimensional representations of $C_n$. 
\end{example} 
\begin{example} 
\label{example-three-dimensional-rep-S3}
Let $S_3$ be the symmetric group of degree three.  
Define $\phi : S_3 \rightarrow \mathrm{GL}_3(\mathbb C)$ by 
\[(1) \rightarrow \left[ \begin{matrix} 1 & 0 & 0 \\ 0 & 1 & 0 \\ 0 & 0 & 1 \end{matrix} \right], (12) \rightarrow 
\left[ \begin{matrix} 0 & 1 & 0 \\ 1 & 0 & 0 \\ 0 & 0 & 1 \end{matrix} \right] \]
\[(13) \rightarrow \left[ \begin{matrix} 0 & 0 & 1 \\ 0 & 1 & 0 \\ 1 & 0 & 0 \end{matrix} \right], (23) \rightarrow \left[ \begin{matrix} 1 & 0 & 0 \\ 0 & 0 & 1 \\ 0 & 1 & 0 \end{matrix} \right]\]
\[(123) \rightarrow \left[ \begin{matrix} 0 & 0 & 1 \\ 1 & 0 & 0 \\ 0 & 1 & 0 \end{matrix} \right], (132) \rightarrow \left[ \begin{matrix} 0 & 1 & 0 \\ 0 & 0 & 1 \\ 1 & 0 & 0 \end{matrix} \right]. \] 
\end{example} 
It is easy to check that $\phi$ is a representation of $S_3$ and it has dimension three. The above example is in fact obtained from a general phenomenon by considering the action of a group $G$ on a finite set $X$. We discuss this process of building new representations in the following example:  

\begin{example} 
(Permutation representation) 
\label{example-permutation-representation}Suppose $G$ acts on finite set $X$, that is for each $s \in G$, there is given a permutation $x \mapsto sx$ of $X$ satisfying
\[
1 x = x, s(tx) = (st)x , \quad s,t \in G, x \in X. 
\]

Let $V$ be complex vector space with basis $(e_x)_{x \in X}$. For $s \in G$, let 
\[
\begin{split} \rho: G \rightarrow \mathrm{GL}(V); \\ \rho(s): e_x \mapsto e_{sx}. \end{split} \]
 The $\rho$ is a representation of $G$ with $\mathrm{dim}(\rho) = |X|$.
\end{example} 
The following specific  permutation representation plays a very important role in the representation theory of groups. 
\begin{example}
(Regular Representation) If $V$ is space with basis $(e_g)_{g \in G}$, then above action is called regular representation of $G$.
\end{example} In the area of representation theory of finite groups, we are interested in determining all representations of a finite group $G$. We now discuss some tools that help us in this endeavor.

Let $(\phi, V)$ be a representation of $G$. A subspace $W$ of $V$ is called \(\boldsymbol{G}\)-\textbf{invariant} if $\phi(g) W = W$ for every $g\in G$. In this case, $\phi$ restricts to $W$ and gives rise to a homomorphism $\psi\colon G\to\gl(W)$. Such a representation is called a \textbf{sub-representation} of $(\phi, V)$. 
    \begin{definition}[Irreducibility]
        A representation $\phi\colon G\to\gl(V)$ is called \textbf{irreducible} if it admits no proper sub-representations.  
    \end{definition}
    In other words, $(\phi, V)$ is irreducible if and only if it does not have proper invariant subspaces. Note that any one dimensional representation of $G$ is irreducible. 
    \begin{definition}[Equivalence or isomorphism of representations]
       Two representations $(\phi_{1}, V_{1})$ and $(\phi_{2}, V_{2})$ are said to be equivalent or isomorphic, denoted $(\phi_1, V_1) \cong (\phi_2, V_2)$, if there exists an isomorphism $T : V_{1} \mapsto V_{2}$ such that 
$ \phi_{2}(g)T = T\phi_{1}(g)$ for all $g \in G $. 
    \end{definition}
In matrix notations, if $\dim(V_1) = \dim(V_2) = n$. Then $(\phi_1, V_1) \cong (\phi_2, V_2)$, if there exists $X \in \mathrm{GL}_n(\mathbb C)$ such that \[ X \phi_1(g) X^{-1} = \phi_2(g), \]
for all $g \in G.$
We now give some examples to clarify this notion of equivalence. 
\begin{example}
    Consider $\phi_1 : S_2 \rightarrow \mathrm{GL}_2(\mathbb C)$ by 
\[ (1) \mapsto \left[ \begin{smallmatrix} 1 & 0 \\ 0 & 1 \end{smallmatrix} \right], (12) \mapsto 
\left[ \begin{smallmatrix} 0 & 1 \\ 1 & 0 \end{smallmatrix} \right]. \] 
and $\phi_2 : S_2 \rightarrow \mathrm{GL}_2(\mathbb C)$ by 
\[ (1) \mapsto \left[ \begin{smallmatrix} 1 & 0 \\ 0 & 1 \end{smallmatrix} \right], (12) \mapsto \left[ \begin{smallmatrix} 1 & 0 \\ 0 & -1 \end{smallmatrix} \right]. \] 
Then $\phi_1 \cong \phi_2$. Indeed  $X = \left[ \begin{smallmatrix} 1 & 1 \\ 1 & -1 \end{smallmatrix} \right]$ satisfies 
\[ X \left[ \begin{smallmatrix} 0 & 1 \\ 1 & 0 \end{smallmatrix} \right] X^{-1}  = \left[ \begin{smallmatrix} 1 & 0 \\ 0 & -1 \end{smallmatrix} \right].\]
\end{example}
\begin{example}
    Consider one dimensional representations $\phi_i$ and $\phi_j$ of cyclic group $C_n$ given by 
\[ \phi_i(\omega) = \zeta_n^i \,, \quad \phi_j(\omega) = \zeta_n^j \]
where $\omega$ is generator of $C_n$.We claim that for $i \neq j$, $\phi_i \ncong \phi_j$. We note that if $f : \mathbb C \rightarrow \mathbb C$ is an isomorphism then $f(x) = \lambda x$ for some $\lambda \in \mathbb C^{\star}$. By definition, $\phi_i \cong \phi_j$ if and only if the following diagram commutes. 
\[ 
\xymatrix{
\mathbb C \ar[d]^{\lambda} \ar[r]^{\zeta_n^i} & \mathbb C \ar[d]^{\lambda} \\
\mathbb C \ar[r]^{\zeta_n^j} & \mathbb C }
\]
Therefore, we must have $\lambda \zeta_n^i = \zeta_n^j \lambda$. This is not possible for $i \neq j$. 

\end{example}
\begin{definition}($G$-linear map) Let $(\phi_1, V_1)$ and $(\phi_2, V_2)$ be two representations of finite group $G$. Then a map $T: V_1 \rightarrow V_2$ is called $G$-linear if
\begin{enumerate}
\item $T$ is $\mathbb C$-linear. 
\item $T \circ \phi_1(g) = \phi_2(g) \circ T$ for all $g \in G$. 
\end{enumerate}
\end{definition}
\begin{lemma} The kernel and image of $G$-linear map are $G$-invariant subspaces. 
\end{lemma}

\begin{proof}
 Let $(\phi_1, V_1)$ and $(\phi_2, V_2)$ be representations of $G$. Let $T: V_1 \rightarrow V_2$ be a $G$-linear map.
 Let $W_1 \subseteq V_1$ be the kernel of $T$. We prove that $\phi_1(g)v \in W_1$ for all $g \in G$ and $v \in W_1$. We note that $T (\phi_1(g) v) = \phi_2(g) (T(v)) = 0 $. Therefore $\phi_1(g) v \in W_1 $ for all $g\in G$ and $v \in W_1$. A similar argument proves that the image of $T$ is a sub-representation of $(\phi_2, V_2)$. 
 \end{proof} 
 Another useful tool to construct new representations from the known ones is of the direct sum of representations defined below: 
\begin{definition} (Direct Sum of representations) If $(\phi , V)$ and $(\psi, W)$ be two representations of group $G$, then $(\phi \oplus \psi, V \oplus W)$ given by 
\[ [(\phi \oplus \psi)(g)](v,w) = (\phi(g)v, \psi(g)w) \] 
is a representation of $G$, called the direct sum of $\phi$ and $\psi.$ 
\end{definition}

Note that $\mathrm{dim}(\phi \oplus \psi) = \mathrm{dim}(\phi) + \mathrm{dim}(\psi)$.  
 In terms of matrices, the matrix of $(\phi \oplus \psi)(g)$ is given by  
\[
(\phi \oplus \psi)(g) = \left[ \begin{smallmatrix} \phi(g) & 0 \\ 0 & \psi(g) \end{smallmatrix} \right] 
\]
for all $g \in G.$

For the readers, who are familiar with the language of modules over rings and algebras, representation of a finite group is analogous to studying the modules of certain finite dimensional algebras, called Group algebras of $G$. We briefly discuss this here for the reader's convenience.  

{\bf Group Algebra}
Let $G$ be a finite group and $F$ a field.  
The \emph{group algebra} $F[G]$ is the $F$-vector space with basis $\{g \mid g \in G\}$, i.e.
\[
F[G] = \left\{ \sum_{g \in G} a_g g \;\middle|\; a_g \in F \right\}.
\]
Multiplication is defined by extending the group operation linearly:
\[
\Big( \sum a_g g \Big)\Big( \sum b_h h \Big) = \sum_{g,h} a_g b_h (gh).
\]
Thus $F[G]$ is both a vector space (of dimension $|G|$) and an algebra. 

We note that a vector space $V$ is a complex representation of $G$ is the same as giving $V$ the structure of a \emph{left $\mathbb C[G]$-module} via
\[
\Big( \sum a_g g \Big) \cdot v = \sum a_g \rho(g)(v).
\]
Hence studying representations of $G$ is equivalent to studying $\mathbb C[G]$-modules. Hence representations of $G$ can also be studied using this language. 
The following result relates the notion of irreducible representations and the direct sum of representations.
\begin{proposition} Let $(\phi, V)$ be a complex representation of finite group $G$. The following are equivalent:
\begin{enumerate}
\item $(\phi, V)$ is irreducible. 
\item $(\phi, V)$ can not be written as direct sum of two proper sub-representations.
\end{enumerate}
 
\end{proposition}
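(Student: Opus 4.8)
The plan is to prove the two implications separately, handling the easy direction directly and reserving an averaging argument for the substantive one.

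For $(1)\Rightarrow(2)$, I would argue by contrapositive: if $(\phi,V)$ could be written as $V = V_1\oplus V_2$ with $(\phi_1,V_1)$ and $(\phi_2,V_2)$ proper sub-representations, then $V_1$ is a $G$-invariant subspace with $0\neq V_1\neq V$, so $(\phi,V)$ has a proper sub-representation and is not irreducible. This step is immediate from the definitions of direct sum and of sub-representation already recorded above, and requires no calculation.

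For $(2)\Rightarrow(1)$, again by contrapositive, suppose $(\phi,V)$ is not irreducible, so it has a proper nonzero $G$-invariant subspace $W$. I would then construct a $G$-invariant complement to $W$ by the standard averaging trick. Choose any vector-space complement $U$ of $W$ in $V$, and let $p\colon V\to V$ be the linear projection onto $W$ along $U$, so $p^2=p$, $\operatorname{im}p=W$, and $p|_W=\mathrm{id}_W$. Define
\[
\tilde p \;=\; \frac{1}{|G|}\sum_{g\in G}\phi(g)\,p\,\phi(g)^{-1}.
\]
I would then check three things: (i) $\tilde p$ is $G$-linear, i.e.\ $\tilde p\circ\phi(h)=\phi(h)\circ\tilde p$ for all $h\in G$, which follows by reindexing the sum $g\mapsto hg$; (ii) $\operatorname{im}\tilde p\subseteq W$, since $W$ is $G$-invariant so each summand $\phi(g)p\phi(g)^{-1}$ has image in $W$; and (iii) $\tilde p|_W=\mathrm{id}_W$, because for $w\in W$ we have $\phi(g)^{-1}w\in W$, hence $p\,\phi(g)^{-1}w=\phi(g)^{-1}w$, hence $\phi(g)p\phi(g)^{-1}w=w$, and averaging over the $|G|$ terms returns $w$. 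Together (ii) and (iii) give $\tilde p^2=\tilde p$ with $\operatorname{im}\tilde p=W$, so $V=\operatorname{im}\tilde p\oplus\ker\tilde p=W\oplus\ker\tilde p$. Since $\tilde p$ is $G$-linear, $\ker\tilde p$ is a $G$-invariant subspace by the lemma proved above, so this is a decomposition into sub-representations; and both summands are proper because $0\neq W\neq V$. Hence $(\phi,V)$ is a direct sum of two proper sub-representations, as desired.

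The only genuinely non-formal ingredient is the averaging construction, so that is where I expect the main work to lie: verifying that $\tilde p$ is simultaneously $G$-equivariant and a projection onto $W$. It is worth emphasizing that this step is exactly where the hypothesis of working over $\mathbb{C}$ (more precisely, that $|G|$ is invertible in the field) is used, via the factor $1/|G|$; this is the content of Maschke's theorem in the form "every sub-representation admits a $G$-invariant complement."
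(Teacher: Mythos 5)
Your proof is correct and follows essentially the same route as the paper: the forward implication is immediate, and the reverse is handled by averaging a vector-space projection onto the invariant subspace $W$ to obtain a $G$-linear projection whose kernel is the required invariant complement. You simply spell out the verification that the averaged map is a projection onto $W$, which the paper leaves implicit.
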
 
\begin{proof} Here $(1) \implies (2)$ is straight forward. For the other side implication, we show that there is a complimentary invariant subspace $W'$ of a proper subspace $W$ of $V$ such that $
V = W \oplus W'.$
 Let $U$ be an arbitrary complement of $W$ in $V$ and let 
\[
\pi_0 : V \rightarrow W
\]
be the projection given by the direct sum decomposition $V = W \oplus U$. We average the map $\pi_0$ over $G$, that is define an onto map $\pi : V \rightarrow W$ by,
\[
\pi(v) = \frac{1}{|G|} \sum_{g \in G} \phi(g) (\pi_0 (\phi(g)^{-1} v)).
\]
Then $\pi$ is a $G$-linear. Its kernel, say $W'$, is the required $G$-invariant complement of $W$.  
\end{proof} 
From above, we directly obtain the following result:  
\begin{theorem}(Maschke) Every complex representation of finite group $G$ is a direct sum of irreducible representations of $G$. 
\end{theorem}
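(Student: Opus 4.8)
The plan is to deduce Maschke's theorem from the preceding proposition by induction on $\dim V$. The proposition we have just established says precisely that a representation fails to be irreducible exactly when it splits as a direct sum of two proper sub-representations, so the inductive step is essentially free once the base case is in place.

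First I would set up the induction on $n = \dim V$. If $n = 0$, then $V$ is the empty direct sum and there is nothing to prove; if $n = 1$, then $(\phi,V)$ is automatically irreducible (as noted right after the definition of irreducibility), so it is a direct sum of one irreducible, namely itself. This handles the base case.

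Next, for the inductive step, suppose the result holds for all complex representations of $G$ of dimension less than $n$, and let $(\phi,V)$ have dimension $n \geq 1$. If $(\phi,V)$ is irreducible, we are done. Otherwise, by the proposition above, we may write $V = W \oplus W'$ where $W$ and $W'$ are proper $G$-invariant subspaces, hence sub-representations $(\phi|_W, W)$ and $(\phi|_{W'}, W')$ of $(\phi,V)$. Since $W$ and $W'$ are proper and $V = W \oplus W'$, both have dimension strictly less than $n$. Applying the induction hypothesis to each, we obtain decompositions of $W$ and $W'$ into irreducible sub-representations, and concatenating these exhibits $V$ as a direct sum of irreducible representations of $G$. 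This completes the induction and hence the proof.

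The only genuinely substantive point here is the averaging argument producing a $G$-invariant complement, but that is exactly the content of the proposition we are allowed to invoke (its proof uses the $\frac{1}{|G|}\sum_g \phi(g)\pi_0\phi(g)^{-1}$ trick, which requires $|G|$ to be invertible in $\mathbb{C}$, i.e. finiteness of $G$ and characteristic zero). So there is no real obstacle remaining; the work is purely the bookkeeping of the induction. I would simply remark that the whole argument rests on finiteness of $G$, to foreshadow that Maschke's theorem can fail in positive characteristic.
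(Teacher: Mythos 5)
Your argument is correct and matches the paper's intent: the paper derives Maschke's theorem directly from the preceding proposition (whose proof contains the averaging trick producing a $G$-invariant complement), and your induction on $\dim V$ simply makes explicit the bookkeeping the paper leaves implicit with the phrase ``From above, we directly obtain.'' No gaps.
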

This reduces our question of determining all representations of $G$ to the following:
\begin{question} What are all the finite-dimensional inequivalent irreducible complex representations of a given finite group $G$? 
\end{question}
We use $\mathrm{Irr}(G)$ to denote a complete set of in-equivalent irreducible representations of $G$. 
Let us answer this question to determine $\mathrm{Irr}(C_n)$ for the $C_n$ of order $n$. Let $\phi : C_n \rightarrow \mathrm{GL}_m(\mathbb C)$ be a  homomorphism. 
It is easy to observe that every finite order complex matrix is diagonalizable. 
Therefore, we note that the matrices $\phi(x)$ for $x \in C_n$ are pairwise commuting and diagonalizable. Hence $\phi(x)$ are simultaneously diagonalizable for all $x \in C_n$. 
Therefore $\phi$ is easily seen to be a direct sum of one dimensional representations of $C_n$. This proves that every irreducible representation of $C_n$ is one dimensional. Further, we have determined all of these in Example~\ref{example:cyclic}. In this case, we will have
\[
\phi \cong \oplus_{i} \phi_i^{\oplus m_i},
\]
for $1 \leq i \leq n$. The integer $m_i$ is called the {\bf multiplicity} of $\phi_i$ in $\phi$. In general it is defined as below:
\begin{definition}(Multiplicity) 
Let $V$ be a representation of a group $G$ such that
\[
V \cong \oplus V_i^{m_i}, 
\]
where $V_i$'s are pairwise in-equivalent irreducible  representations of $G$. Then $m_i$ is called the multiplicity of $V_i$ in $V$. It is denoted by $\langle V, V_i \rangle.$ Note that $m_i = \dim_{\mathbb C} \mathrm{Hom}_G(V, V_i).$ 
\end{definition}

The following result is very useful to determine if a representation is irreducible or not and also whether any two irreducible representations are equivalent or not. 
\begin{lemma} (Schur's Lemma) Let $\phi_1 : G \rightarrow \mathrm{GL}(V_1)$ and $\phi_2 : G \rightarrow \mathrm{GL}(V_2)$ be two irreducible representations of $G$, and let $T$ be a linear mapping of $V_1$ into $V_2$ such that $\phi_2(g) \circ T = T \circ \phi_1(g)$ for all $g \in G$. Then :
\begin{enumerate}
\item If $\phi_1$ and $\phi_2$ are not isomorphic, we have $T = 0$.
\item If $V_1 = V_2$ and $\phi_1 = \phi_2$, then $T(x) = \lambda x$ for $x \in V$, for some scalar $\lambda \in \mathbb C$. 
\end{enumerate} 
\end{lemma}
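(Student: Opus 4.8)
The plan is to prove Schur's Lemma by invoking the previous lemma (kernel and image of a $G$-linear map are $G$-invariant) together with the defining property of irreducibility, and then for part (2) additionally use that $\C$ is algebraically closed so that $T$ has an eigenvalue.

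For part (1), I would argue as follows. The hypothesis says precisely that $T\colon V_1 \to V_2$ is a $G$-linear map. By the lemma proved earlier, $\ker T$ is a $G$-invariant subspace of $V_1$. Since $(\phi_1, V_1)$ is irreducible, $\ker T$ is either $0$ or all of $V_1$. Similarly $\operatorname{im} T$ is a $G$-invariant subspace of $V_2$, so by irreducibility of $(\phi_2, V_2)$ it is either $0$ or all of $V_2$. If $T \neq 0$ then $\ker T \neq V_1$, forcing $\ker T = 0$, and $\operatorname{im} T \neq 0$, forcing $\operatorname{im} T = V_2$; hence $T$ is an isomorphism intertwining $\phi_1$ and $\phi_2$, contradicting the assumption that they are not isomorphic. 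Therefore $T = 0$.

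For part (2), where $V_1 = V_2 = V$ and $\phi_1 = \phi_2 = \phi$, I would pick an eigenvalue $\lambda \in \C$ of $T$ (which exists since $V$ is a nonzero finite-dimensional complex vector space and the characteristic polynomial of $T$ has a root in $\C$). Then consider $T' = T - \lambda \cdot \mathrm{id}_V$. This $T'$ is again $G$-linear, since $\mathrm{id}_V$ commutes with every $\phi(g)$ and $G$-linear maps form a vector space. Now $\ker T'$ is a $G$-invariant subspace that is nonzero, because $\lambda$ is an eigenvalue. By irreducibility of $(\phi, V)$, we conclude $\ker T' = V$, i.e.\ $T' = 0$, so $T = \lambda \cdot \mathrm{id}_V$, which is exactly the statement $T(x) = \lambda x$ for all $x \in V$.

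The only mild subtlety — and the step I would flag as the ``main obstacle,'' though it is really just a point to state carefully — is the use of algebraic closedness of $\C$ in part (2): one must ensure $V$ is nonzero and finite-dimensional (both standing assumptions in the paper's definition of representation) so that an eigenvalue is guaranteed to exist. Everything else is a direct application of the $G$-invariance lemma and the definition of irreducibility, so no lengthy computation is needed.
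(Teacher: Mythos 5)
Your proof is correct and follows essentially the same route as the paper: part (1) via the $G$-invariance of kernel and image plus irreducibility, and part (2) by subtracting an eigenvalue and noting the resulting $G$-linear map has nonzero kernel. The only cosmetic difference is that the paper deduces $T'=0$ by citing part (1), whereas you argue directly that $\ker T'$ is a nonzero invariant subspace and hence all of $V$.
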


 \begin{proof}
  Suppose $T: V_1 \rightarrow V_2$ such that $T \neq 0$. The $G$-linearity of $T$ implies that both kernel and image of $T$ are $G$-invariant subspaces. The irreducibility of $\phi_1$ and $\phi_2$ implies $T$ is an isomorphism. 

We now assume that $(\phi_1, V_1) = (\phi_2, V_2)$.  Let $\lambda$ be an eigenvalue of $T$ (recall field is $\mathbb C$). 
Let $T' = T -\lambda$. Then $Ker(T') \neq 0$. 
 We note that $T'$ is also a $G$-linear map. 
 By (1), we must have $T' = 0$. That is $T(x) = \lambda(x)$ for all $x \in V_1$. 
 \end{proof}
 
    \begin{definition}[Character]
        Let $\rho\colon G\to\gl(V)$ be a finite dimensional representation. The \textbf{character} $\chi_\rho$ of $\rho$ is the complex valued function $\chi_\rho\colon G\to\mathbb C$ defined by $\chi_\rho(g)=\tr(\rho(g))$ for every $g\in G$. 
    \end{definition}
If $\rho$ is an irreducible representation of $G$, then its character $\chi_\rho$ is called an irreducible character of $G$. It is easy to see that $\chi_{\rho} = \chi_{\rho'}$ for $\rho \cong \rho'.$ It turns out that converse is also true. As conjugate matrices have the same trace, it follows that $\chi_\rho$ is constant on a conjugacy class of $G$. Such a function is called a \textbf{class function} of $G$. 
 Let $\mathbb C^{\mathrm{class}}(G)$ be the space of all class functions of $G$. For $f, f' \in \mathbb C^{\mathrm{class}}(G),$ define
 \[
 \langle f, f' \rangle = \frac{1}{|G|}\sum_{g \in G} f(g) \overline{f'(g)}.
 \]
Then it is easy to see that $\mathbb C^{\mathrm{class}}(G)$ is an inner product space with respect to the inner product. The following results, whose proof we skip, are highly beneficial result to determine all irreducible representations of a finite group. For proofs, see~\cite{MR450380}. 
 \begin{proposition}
\label{prop:identify-irreucible}   \begin{enumerate}
         \item The set $\{\chi_\rho \mid \rho \in \mathrm{Irr}(G) \}$  is an orthonormal basis of $\mathbb{C}^{\mathrm{class}}(G).$ 
         \item A representation $\rho$ is irreducible if and only if $\langle \chi_\rho, \chi_\rho \rangle = 1.$ 
         \item For representations $\rho$, $\rho'$ (not necessarily irreducible) of $G$, we have $\chi_{\rho} = \chi_{\rho'}$ if and only if $\rho \cong \rho'.$
     \end{enumerate}
 \end{proposition}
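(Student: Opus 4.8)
The plan is to reduce all three assertions to the \emph{Schur orthogonality relations} for matrix coefficients. First I would observe that every complex representation $(\rho,V)$ of $G$ may be assumed unitary: averaging an arbitrary Hermitian inner product on $V$ over $G$ produces a $G$-invariant one, with respect to which each $\rho(g)$ is unitary, so the matrix entries satisfy $\overline{r_{ij}(g)}=r_{ji}(g^{-1})$. Next, for irreducible $(\phi_1,V_1)$ and $(\phi_2,V_2)$ and \emph{any} linear map $h\colon V_1\to V_2$, I would check that the averaged map $\widetilde h=\frac{1}{|G|}\sum_{g\in G}\phi_2(g)\,h\,\phi_1(g)^{-1}$ is $G$-linear; Schur's Lemma then forces $\widetilde h=0$ when $\phi_1\not\cong\phi_2$, and $\widetilde h=\frac{\tr(h)}{\dim V_1}\,\mathrm{id}$ when $(\phi_1,V_1)=(\phi_2,V_2)$. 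Letting $h$ range over the elementary matrix units and reading off entries gives, for irreducible unitary $\rho,\rho'$,
\[
\frac{1}{|G|}\sum_{g\in G} r_{ij}(g)\,\overline{r'_{kl}(g)}=\begin{cases}\dfrac{1}{\dim\rho}\,\delta_{ik}\delta_{jl}&\text{if }\rho=\rho',\\[4pt] 0&\text{if }\rho\not\cong\rho'.\end{cases}
\]

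Summing over $i=j$ and $k=l$ and using $\chi_\rho(g)=\sum_i r_{ii}(g)$ then yields $\langle\chi_\rho,\chi_{\rho'}\rangle=\delta_{\rho,\rho'}$ for $\rho,\rho'\in\mathrm{Irr}(G)$; this is the orthonormality in $(1)$ and, in particular, shows the irreducible characters are linearly independent in $\mathbb C^{\mathrm{class}}(G)$. Parts $(2)$ and $(3)$ should then follow formally from Maschke's Theorem: writing $\rho\cong\bigoplus_i\rho_i^{\oplus m_i}$ with the $\rho_i\in\mathrm{Irr}(G)$ pairwise inequivalent, additivity of the trace gives $\chi_\rho=\sum_i m_i\chi_{\rho_i}$, so $\langle\chi_\rho,\chi_\rho\rangle=\sum_i m_i^2$, which equals $1$ exactly when one $m_i$ is $1$ and the rest are $0$, i.e.\ exactly when $\rho$ is irreducible; and since $m_i=\langle\chi_\rho,\chi_{\rho_i}\rangle$ is recovered from $\chi_\rho$, the equality $\chi_\rho=\chi_{\rho'}$ forces all multiplicities to agree, hence $\rho\cong\rho'$ (the converse being immediate, as conjugate matrices share a trace).

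The remaining, and hardest, point is the spanning half of $(1)$: I must show that a class function $f$ with $\langle f,\chi_\rho\rangle=0$ for every $\rho\in\mathrm{Irr}(G)$ is identically zero. For this I would, given any representation $(\rho,V)$, form $T_f=\sum_{g\in G}\overline{f(g)}\,\rho(g)\in\mathrm{End}(V)$; since $f$ is a class function, the substitution $g\mapsto h^{-1}gh$ shows $\rho(h)T_f\rho(h)^{-1}=T_f$, so $T_f$ is $G$-linear, and when $\rho$ is irreducible Schur's Lemma gives $T_f=\lambda\,\mathrm{id}$ with $\lambda\dim V=\tr(T_f)=\sum_g\overline{f(g)}\chi_\rho(g)=|G|\,\overline{\langle f,\chi_\rho\rangle}=0$. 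Hence $T_f$ vanishes on every irreducible representation, and therefore, by Maschke, on every representation; applying $T_f=0$ in the regular representation $\mathbb C[G]$ to the basis vector $e_1$ gives $\sum_{g\in G}\overline{f(g)}\,e_g=0$, forcing $f\equiv 0$ because $\{e_g\}$ is a basis. Thus $\mathrm{span}\{\chi_\rho\mid\rho\in\mathrm{Irr}(G)\}$ has trivial orthogonal complement and $(1)$ is complete.

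I expect this spanning statement to be the main obstacle: orthonormality and the deductions $(2)$ and $(3)$ are essentially bookkeeping once the matrix-coefficient relations are in hand, whereas surjectivity onto $\mathbb C^{\mathrm{class}}(G)$ genuinely requires exhibiting a representation — the regular representation $\mathbb C[G]$ — faithful enough to conclude $f=0$ from $T_f=0$. A secondary source of friction is the careful handling of complex conjugates and of inverses of group elements, which is precisely what the initial reduction to unitary representations is designed to absorb.
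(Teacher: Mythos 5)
The paper gives no proof of this proposition: it explicitly skips it and refers the reader to Serre \cite{MR450380}. Your sketch is precisely the standard argument from that reference --- unitarization, Schur orthogonality of matrix coefficients, the multiplicity computation $\langle\chi_\rho,\chi_\rho\rangle=\sum_i m_i^2$ for (2) and (3), and the operator $T_f$ evaluated on the regular representation for the spanning half of (1) --- and it is correct as stated.
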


 \begin{theorem}
 \label{thm:number-irr-conj} The number of inequivalent irreducible representations of a finite group is equal to the number of its conjugacy classes.
\end{theorem}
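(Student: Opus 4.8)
The plan is to deduce this directly from Proposition~\ref{prop:identify-irreucible}, which is where the substantive content lies; what remains is only a dimension count for the space of class functions. Concretely, I would argue that both the number of irreducible representations and the number of conjugacy classes equal $\dim_{\mathbb C}\mathbb C^{\mathrm{class}}(G)$.

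First I would invoke part~(1) of Proposition~\ref{prop:identify-irreucible}: the family $\{\chi_\rho \mid \rho \in \mathrm{Irr}(G)\}$ is an orthonormal basis of the inner product space $\mathbb C^{\mathrm{class}}(G)$. In particular these characters are linearly independent and span, so
\[
|\mathrm{Irr}(G)| = \dim_{\mathbb C}\mathbb C^{\mathrm{class}}(G).
\]
(I am implicitly using part~(3) of the same proposition, that inequivalent irreducibles have distinct characters, so that the index set really has cardinality $|\mathrm{Irr}(G)|$ and no collapsing occurs.)

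Second I would compute $\dim_{\mathbb C}\mathbb C^{\mathrm{class}}(G)$ by exhibiting an explicit basis. Let $C_1,\dots,C_r$ be the distinct conjugacy classes of $G$ and let $\mathbf 1_{C_i}\colon G \to \mathbb C$ be the indicator function of $C_i$. Since conjugation maps each $C_i$ onto itself, each $\mathbf 1_{C_i}$ is a class function. Conversely, an arbitrary $f \in \mathbb C^{\mathrm{class}}(G)$ is constant on each $C_i$, with value $\lambda_i$ say, whence $f = \sum_{i=1}^{r}\lambda_i\mathbf 1_{C_i}$; and the $\mathbf 1_{C_i}$ have pairwise disjoint supports, so they are linearly independent. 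Thus $\{\mathbf 1_{C_1},\dots,\mathbf 1_{C_r}\}$ is a basis of $\mathbb C^{\mathrm{class}}(G)$, giving $\dim_{\mathbb C}\mathbb C^{\mathrm{class}}(G) = r$. Comparing with the displayed identity yields $|\mathrm{Irr}(G)| = r$, which is the assertion.

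The only real obstacle is Proposition~\ref{prop:identify-irreucible}(1) itself — not merely the orthonormality of irreducible characters but their \emph{completeness}, i.e.\ that no nonzero class function is orthogonal to every irreducible character. Since we are allowed to assume that proposition, the present theorem is essentially a corollary, and the argument above is just the bookkeeping that converts ``the irreducible characters form a basis'' into ``their number equals that of the conjugacy classes.'' (An alternative route, which I would mention only in passing, runs through the Artin--Wedderburn decomposition $\mathbb C[G] \cong \prod_i M_{n_i}(\mathbb C)$: the number of matrix factors is $|\mathrm{Irr}(G)|$, while the class sums form a basis of the center $Z(\mathbb C[G])$, whose dimension is the number of conjugacy classes; but this just relocates the same analytic input.)
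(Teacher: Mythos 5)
Your proof is correct. The paper itself gives no proof of this theorem (it defers to the reference \cite{MR450380}), and your argument --- combining \Cref{prop:identify-irreucible}(1) with the observation that the indicator functions of the conjugacy classes form a basis of $\mathbb C^{\mathrm{class}}(G)$ --- is precisely the standard derivation found there, so it fills the gap in exactly the intended way.
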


\begin{theorem}
\label{thm:dimension-square}If $\{\rho_1, \rho_2, \ldots, \rho_t\}$ is a complete set of all in-equivalent irreducible representations of group $G$, then
\[
|G| = \sum_{i=1}^t \mathrm{dim}(\rho_{i})^2. 
\] 
\end{theorem}
Let us use the above results to describe $\mathrm{Irr}(S_3)$. The group $S_3$ has three conjugacy classes and the only way to write $6$ as the sum of three squares is:
\[
6 = 1 + 1 + 2^2.
\]
Hence $S_3$ must have two one dimensional representations and one two dimensional irreducible representation. Trivial representations, defined in \Cref{example-trivial}, gives a representation of $S_3$ of dimension one. Another one dimensional representation of $S_3$ is obtained by considering the $\mathrm{sign}$ function from $S_3$ to $\{ \pm 1\}.$ To obtain the remaining two dimensional representation of $S_3$, we consider the representation $\phi$ discussed in \Cref{example-three-dimensional-rep-S3}. Observe that $\phi((12))$ and $\phi((23))$ do not commute, hence $\phi$ cannot be a direct sum of one dimensional representations of $S_3$. Hence $\phi = \phi_1 \oplus \phi_2,$ for some one-dimensional representation $\phi_1$ and a two-dimensional representation $\phi_2.$ Hence we obtain our remaining two-dimensional irreducible representation of $S_3$. 

Infact, let us  find this representation explicitly. Notice that the representation $\phi$ is obtained by considering the permutation action of $S_3$ on $\{ 1, 2, 3\}$ by $\sigma(e_x) = e_{\sigma x}$ as discussed in \Cref{example-permutation-representation}. Let us denote this representation by $(\phi, V)$. Consider $W = \mathrm{Span}_{\mathbb C}(e_1 + e_2 + e_3)$. Then $W$ is an invariant subspace of $V$. The quotient space $W' = V/W$ is also a representation of dimension $2$ with specific action given by 
\[
\phi_2: G \rightarrow \mathrm{GL}(W'); \,\, \phi_2((12)) = \begin{pmatrix} 0 & 1 \\ 1 & 0 \end{pmatrix},\,\,  \phi_2((23)) =  \begin{pmatrix} -1 & 0 \\ -1 & 1 \end{pmatrix}.
\]
Since $\phi_2$ has dimension two and $\phi_2((12))$ and $\phi_2((23))$ do not commute, so $\phi_2$ must be the two dimensional irreducible representation of $S_3$. We show the irreducibility of $\phi_2$ also by evaluating its character value. We note that $\chi_{\phi_2}$ is given by $\chi_{\phi_2}(e) = 2$, $\chi_{\phi_2}(12) = \chi_{\phi_2}(23) = \chi_{\phi_2}(13) = 0,$ and $\chi_{\phi_2}(123) = -1, \chi_{\phi_2}(132) = -1.$ 
Therefore,
\[
\langle \chi_{\phi_2}, \chi_{\phi_2} \rangle = \frac{1}{6}(4 + 0 + 2(-1)(-1)) = 1. 
\]
By \Cref{prop:identify-irreucible}, the representation $\phi_2$ is an irreducible representation of $S_3.$  

We will now focus on the representations of infinite families of Coxeter groups. Recall these groups are given as below:  
\begin{itemize}
\item $A_n$: These are the symmetric groups with $A_n = S_{n+1}.$ 
\item $B_n$ These are hyper-octahedral groups given by $B_n =(\Z/2\Z)^n \rtimes S_n$.
\item $D_n=$: A specific index two subgroup of $B_n$, usually called as even-signed permutation group on 
$n$ letters.
\item $I_2(m)$: The Dihedral groups of order $2m$. 
\end{itemize}

\subsection{A construction of representations of $S_n$}
  
Consider the symmetric group on $n$ letters $S_n$. 
\begin{definition}
[Partition of a Positive Integer]
A \textbf{partition} of a positive integer $n$ is a finite sequence of positive integers
\[
\lambda = (\lambda_1, \lambda_2, \dots, \lambda_k)
\]
such that:
\begin{enumerate}
  \item $\lambda_1 \geq \lambda_2 \geq \cdots \geq \lambda_k > 0$, and
  \item $\lambda_1 + \lambda_2 + \cdots + \lambda_k = n$.
\end{enumerate}

\end{definition}
We will use $\lambda \vdash n$ to denote that $\lambda$ is a partition of $n$.  For any partition $\lambda=(\lambda_1,\lambda_2,..,\lambda_k) \vdash n$, all permutations in $S_n$ of cycle type $\lambda$ form a conjugacy class. The following is well known regarding the Symmetric group $S_n$. 

\begin{theorem}
    The set of all conjugacy classes of $S_n$ is in natural bijection with the set of all partitions of $n$.
\end{theorem}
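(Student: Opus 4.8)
The plan is to establish the bijection between conjugacy classes of $S_n$ and partitions of $n$ by exhibiting a well-defined map in each direction and checking they are mutually inverse. The underlying principle is the standard fact that two permutations in $S_n$ are conjugate if and only if they have the same cycle type, which in turn rests on the behaviour of cycle decompositions under conjugation.

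First I would recall that every permutation $\sigma \in S_n$ has an essentially unique decomposition into disjoint cycles (including fixed points as $1$-cycles), and that the multiset of cycle lengths, written in weakly decreasing order, is a partition of $n$ since the cycle lengths sum to $n$. This assigns to each $\sigma$ its \emph{cycle type} $\lambda(\sigma) \vdash n$, giving a map from $S_n$ to the set of partitions of $n$. The next step is the key computational lemma: for any $\tau \in S_n$ and any cycle $(a_1\, a_2\, \cdots\, a_r)$, one has $\tau (a_1\, a_2\, \cdots\, a_r) \tau^{-1} = (\tau(a_1)\, \tau(a_2)\, \cdots\, \tau(a_r))$. Applying this to each cycle in the disjoint cycle decomposition of $\sigma$ shows that $\tau \sigma \tau^{-1}$ has the same cycle type as $\sigma$, so $\lambda$ is constant on each conjugacy class and hence descends to a well-defined map from the set of conjugacy classes to the set of partitions of $n$.

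Then I would show this induced map is surjective and injective. Surjectivity is immediate: given $\lambda = (\lambda_1, \dots, \lambda_k) \vdash n$, write down the explicit permutation that is a single $\lambda_1$-cycle on $\{1, \dots, \lambda_1\}$, a single $\lambda_2$-cycle on the next $\lambda_2$ integers, and so on; its cycle type is $\lambda$. For injectivity, suppose $\sigma$ and $\sigma'$ have the same cycle type. Line up their disjoint cycle decompositions so that the $i$-th cycle of $\sigma$ and the $i$-th cycle of $\sigma'$ have equal length, and define $\tau \in S_n$ by sending the entries of each cycle of $\sigma$, in order, to the entries of the corresponding cycle of $\sigma'$, in order; this $\tau$ is a well-defined bijection of $\{1, \dots, n\}$ because the cycles partition $\{1, \dots, n\}$, and by the conjugation formula $\tau \sigma \tau^{-1} = \sigma'$. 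Hence $\sigma$ and $\sigma'$ lie in the same conjugacy class, and the map is a bijection.

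The only genuinely delicate point — the ``main obstacle,'' such as it is — is bookkeeping: making precise the uniqueness of the disjoint cycle decomposition (up to cyclic rotation within each cycle and reordering of the cycles) and ensuring that the permutation $\tau$ constructed for injectivity is genuinely well-defined regardless of these ambiguities. This is handled by noting that rotating a cycle $(a_1\, \cdots\, a_r)$ or reordering disjoint cycles does not change the permutation, and that the conjugation formula is compatible with these operations, so any choice of alignment yields a valid conjugating element. Everything else is routine, and the bijection is exactly the correspondence asserted in the theorem.
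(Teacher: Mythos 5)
Your proof is correct and complete: it is the standard argument via invariance of cycle type under conjugation, with the conjugation formula $\tau(a_1\,\cdots\,a_r)\tau^{-1}=(\tau(a_1)\,\cdots\,\tau(a_r))$ as the key lemma, and explicit constructions for surjectivity and injectivity. The paper itself states this result as well known and gives no proof, so there is nothing to compare against; your write-up, including the remark that any choice of cycle alignment yields a valid conjugating element, fills that gap correctly.
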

Motivated by \Cref{thm:number-irr-conj}, we will construct in-equivalent irreducible representations of $S_n$ parametrized by the partitions of $n$. 

\begin{definition}[Young Diagrams]
    Given a partition $\lambda=(\lambda_1,\lambda_2,..,\lambda_k) \vdash n$, a Young diagram $T_{\lambda}$ of shape $\lambda$ is left and top alligned shape of empty boxes such that $i^{th}$ row has $\lambda_i$ boxes.
\end{definition} 

\noindent{\bf Examples : } 
\\

\begin{center}
\begin{tabular}{c@{\hskip 2cm}c}
\ydiagram{5,3,1} & \ydiagram{4,2,2} \\
 $T_{(5,3,1)}$ & $T_{(4,2,2)}.$
\end{tabular}
\end{center}
\begin{definition}[Young Tableaux] Given a partition $\lambda=(\lambda_1,\lambda_2,..,\lambda_k) \vdash n$, a filling of $T_\lambda$ with numbers from $1$ to $n$ without repetition is called a young tableaux of shape $\lambda$. 
    
\end{definition}

\noindent{\bf Examples : } \begin{center}
\begin{tabular}{c@{\hskip 2cm}c@{\hskip 2cm}c}
\begin{ytableau}
1 & 2 & 4 & 7 & 9 \\
3 & 5 & 8 \\
6
\end{ytableau}
&
\begin{ytableau}
1 & 3 & 5 & 7 & 10 \\
2 & 4 & 8 \\
6
\end{ytableau}
&
\begin{ytableau}
1 & 3 & 3 & 7 & 9 \\
4 & 5 & 8 \\
6
\end{ytableau}
\\[10pt]
Young Tableau & Young Tableau & \textbf{Not a Young Tableau }
\end{tabular}
\end{center}

It is easy to observe that the number of the young tableaux to shape  $\lambda$ for  $\lambda \vdash n$ is $n!$. 
For $\lambda \vdash n, \sigma \in S_n $ the young tableau $T_\lambda(\sigma)$ of shape $\lambda$ associated to $\sigma \in S_n$ is the diagram $T_\lambda$ filled with entries $\sigma(1) \dots \sigma(n)$ moving in rows each from left to right and down the columns from up to down. 
\bigskip

\noindent{\bf Examples : } Let id be the identity permutation and $\sigma=(1 5 8 6)(2 3 4 7)$ in $S_9$. Then
\begin{center}
\begin{tabular}{c@{\hskip 2cm}c}
$T_{(5,3,1)}(\mathrm{id}) =$ 
\ytableausetup{boxsize=normal}
\begin{ytableau}
1 & 2 & 3 & 4 & 5 \\
6 & 7 & 8 \\
9
\end{ytableau}
&
$T_{(5,3,1)}(\sigma) =$
\begin{ytableau}
5 & 3 & 4 & 7 & 8 \\
1 & 2 & 6 \\
9
\end{ytableau}
\end{tabular}
\end{center}
The group $S_n$ acts on the set of young tableaux of shape $\lambda \vdash n$ via 
\[
\tau(T_\lambda(\sigma))=T_\lambda(\tau \sigma). 
\]
\begin{definition}(Row(Column) Group of $T_{\lambda}(\sigma)$) Given $T_\lambda(\sigma)$, its row group $R_\lambda(\sigma)$ (column group $C_\lambda(\sigma)$) is the set of partitions of $S_n$ that may permute the entries of each row(column) of $T_\sigma$ but do not allow cross row (column) permutation of the entries of $T_\lambda(\sigma)$.  
\[
R_\lambda(\theta)=\{\sigma \in S_n \mid \sigma \text{ stabilizes the rows of } T_\lambda(\theta) \text{ ( not necessarily entrywise)}\}
\]
\end{definition}
\noindent{\bf Examples : }\begin{enumerate}
    \item For
$T_{(5,3,1)}(\text{id})$, we have  $$R_{(5,3,1)}(\text{id})=S_{\{1,2,3,4,5\}}\times S_{\{6,7,8\}}, \,\,  C_{(5, 3,1)}(\text{id}) = S_{1, 6, 9} \times S_{2,7} \times S_{3, 8}.$$
\item For $\sigma=(1586)(2347)\in S_9$ and $T_{(5,3,1)}(\sigma)$, we have $$R_{(5,3,1)}(\sigma)=S_{\{3,4,5,7,8\}}\times S_{\{1,2,6\}}, \,\, C_{(5,3,1)}(\sigma)= S_{1, 5, 9} \times S_{2, 3} \times S_{4,6}.$$ 
\end{enumerate}

For $\lambda \vdash n,$ let $R_\lambda$ and $C_\lambda$ denote the row group and column group of $T_\lambda(\text{id})$.  Let 
\[
a_\lambda= \underset{\sigma\in R_\lambda}{\Sigma} \sigma \quad \text{and} \quad b_\lambda= \underset{\tau\in C_\lambda}{\Sigma}  sgn(\tau) \tau  \quad
\]
 Let $c_\lambda=a_\lambda b_\lambda$. Define $V_\lambda= \mathbb{C}[S_n]c_\lambda$.
 \bigskip
\begin{theorem}
\label{thm:Sn-construction-1}    
    Let $\lambda \vdash n$ and $V_\lambda$ as defined above. Then
    \begin{enumerate}
        \item For $\lambda \vdash n$, $V_\lambda$ is an irreducible representation of $S_n$. 
        \item $V_\lambda \not \cong V_\mu$ for $\lambda \neq \mu$.
        \item Any irreducible representation of $S_n$ is isomorphic to $V_\lambda$ for some $\lambda \vdash n$.
    \end{enumerate}
\end{theorem}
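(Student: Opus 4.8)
The plan is to realize each $V_\lambda$ as a left ideal $\mathbb{C}[S_n]\,c_\lambda = \mathbb{C}[S_n]\,e_\lambda$ cut out by an explicit idempotent $e_\lambda$ proportional to the Young symmetrizer $c_\lambda=a_\lambda b_\lambda$, and then deduce all three assertions from two algebraic facts: $c_\lambda\,\mathbb{C}[S_n]\,c_\lambda=\mathbb{C}\,c_\lambda$, and $c_\lambda\,\mathbb{C}[S_n]\,c_\mu=0$ for $\lambda\neq\mu$. \emph{Step 1 (elementary identities).} Since right translation by a fixed $p_0\in R_\lambda$ permutes $R_\lambda$, one gets $p\,a_\lambda=a_\lambda=a_\lambda\,p$ for all $p\in R_\lambda$, and likewise $(\mathrm{sgn}\,q)\,q\,b_\lambda=b_\lambda=b_\lambda\,(\mathrm{sgn}\,q)\,q$ for all $q\in C_\lambda$; hence $p\,c_\lambda\,q=(\mathrm{sgn}\,q)\,c_\lambda$. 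Moreover a permutation stabilising every row and every column of $T_\lambda(\mathrm{id})$ setwise is trivial, so $R_\lambda\cap C_\lambda=\{1\}$; therefore the map $R_\lambda\times C_\lambda\to S_n$, $(p,q)\mapsto pq$, is injective, and the coefficient of $1$ in $c_\lambda$ equals $1$, so $c_\lambda\neq 0$.

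\emph{Step 2 (the combinatorial heart).} The inputs I expect to cost real work are: (i) for $g\in S_n$, one has $g\notin R_\lambda C_\lambda$ if and only if there are two integers lying in a common row of $T_\lambda(\mathrm{id})$ and in a common column of $T_\lambda(g)$; and (ii) the dominance lemma: if $T$ is a $\lambda$-tableau and $S$ a $\mu$-tableau such that no two integers in a common row of $S$ lie in a common column of $T$, then $\lambda$ dominates $\mu$. From (i) I would prove the \emph{von Neumann lemma}: if $x=\sum_g x_g\,g$ satisfies $p\,x\,q=(\mathrm{sgn}\,q)\,x$ for all $p\in R_\lambda,\,q\in C_\lambda$, then $x\in\mathbb{C}\,c_\lambda$. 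Comparing coefficients gives $x_{pgq}=(\mathrm{sgn}\,q)\,x_g$; if $g\notin R_\lambda C_\lambda$, take the transposition $t=(i\,j)$ of two integers in a common row of $T_\lambda(\mathrm{id})$ and a common column of $T_\lambda(g)$, and put $p=t\in R_\lambda$, $q=g^{-1}tg\in C_\lambda$, so $pgq=g$ and $\mathrm{sgn}\,q=-1$, forcing $x_g=-x_g=0$; while on $R_\lambda C_\lambda$ the coefficients are all determined by $x_1$ by injectivity of $(p,q)\mapsto pq$, yielding $x=x_1\,c_\lambda$. Applying this to $x=c_\lambda\,y\,c_\lambda$ (which transforms correctly by Step 1) gives $c_\lambda\,y\,c_\lambda=\kappa(y)\,c_\lambda$ for a linear functional $\kappa$; with $y=1$, $c_\lambda^2=n_\lambda\,c_\lambda$ where $n_\lambda:=\kappa(1)$.

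\emph{Step 3 (nonvanishing and irreducibility).} To see $n_\lambda\neq 0$, compute the trace of $F\colon\mathbb{C}[S_n]\to\mathbb{C}[S_n]$, $F(y)=y\,c_\lambda$, two ways: right translation by $g$ has trace $n!$ if $g=1$ and $0$ otherwise, so $\mathrm{tr}(F)=n!$ since the coefficient of $1$ in $c_\lambda$ is $1$; on the other hand $F^2=n_\lambda F$, so if $n_\lambda=0$ then $F$ is nilpotent and $\mathrm{tr}(F)=0$, a contradiction. Hence $e_\lambda:=n_\lambda^{-1}c_\lambda$ is an idempotent, $V_\lambda=\mathbb{C}[S_n]\,e_\lambda\ni e_\lambda\neq 0$, and $\Hom_{S_n}(V_\lambda,V_\lambda)\cong e_\lambda\,\mathbb{C}[S_n]\,e_\lambda=\mathbb{C}\,e_\lambda$ is one-dimensional. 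By Maschke $V_\lambda$ is completely reducible, and a completely reducible module with one-dimensional endomorphism algebra is irreducible; this proves (1).

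\emph{Step 4 (distinctness and completeness).} For (2), given $\lambda\neq\mu$, relabel so that $\lambda$ does not dominate $\mu$. For every $g\in S_n$ we have $b_\lambda\,g\,a_\mu=b_\lambda\,(g a_\mu g^{-1})\,g$; by the contrapositive of the dominance lemma applied to $T=T_\lambda(\mathrm{id})$ and $S=T_\mu(g)$, there is a transposition $t$ lying in $C_\lambda$ and in the row group of $T_\mu(g)$, so $b_\lambda\,t=-b_\lambda$ while $t$ fixes $g a_\mu g^{-1}$ on the left, whence $b_\lambda\,(g a_\mu g^{-1})=-b_\lambda\,(g a_\mu g^{-1})=0$. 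By linearity $b_\lambda\,\mathbb{C}[S_n]\,a_\mu=0$, so $c_\lambda\,\mathbb{C}[S_n]\,c_\mu=a_\lambda\,(b_\lambda\,\mathbb{C}[S_n]\,a_\mu)\,b_\mu=0$, hence $\Hom_{S_n}(V_\lambda,V_\mu)\cong e_\lambda\,\mathbb{C}[S_n]\,e_\mu=0$; two isomorphic irreducibles would admit a nonzero homomorphism, so $V_\lambda\not\cong V_\mu$. Finally (3) is a count: the partitions of $n$ index the conjugacy classes of $S_n$, so by \Cref{thm:number-irr-conj} their number equals $|\mathrm{Irr}(S_n)|$; since (1)--(2) exhibit exactly that many pairwise non-isomorphic irreducible representations, every irreducible representation of $S_n$ is isomorphic to some $V_\lambda$. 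The main obstacle is Step 2: the idempotent bookkeeping is routine, but the two tableau statements (i) and (ii) — controlling when two fillings share a row/column pair — are where the combinatorics of partitions genuinely enters, and they underlie both irreducibility and mutual inequivalence.
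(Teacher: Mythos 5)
Your argument is correct in outline, but it cannot be compared step-by-step with the paper, because the paper offers no proof of this theorem at all: the text reads ``For a proof of this result, see.'' and defers to the literature. What you have written is the standard Young-symmetrizer proof (essentially Fulton--Harris, \S 4.2, going back to Frobenius and von Neumann), and every step you spell out is sound: the identities $p\,c_\lambda\,q=(\mathrm{sgn}\,q)\,c_\lambda$, the injectivity of $(p,q)\mapsto pq$ from $R_\lambda\cap C_\lambda=\{1\}$, the von Neumann lemma forcing $c_\lambda\,y\,c_\lambda\in\mathbb{C}\,c_\lambda$, the trace computation $\mathrm{tr}(F)=n!$ versus nilpotence to get $n_\lambda\neq 0$, the identification $\mathrm{Hom}_{S_n}(\mathbb{C}[S_n]e_\lambda,\mathbb{C}[S_n]e_\mu)\cong e_\lambda\,\mathbb{C}[S_n]\,e_\mu$, and the counting argument for completeness via \Cref{thm:number-irr-conj}. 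The only caveat is the one you yourself flag: the two combinatorial inputs --- (i) the dichotomy ``$g\in R_\lambda C_\lambda$ or some pair shares a row of $T_\lambda(\mathrm{id})$ and a column of $T_\lambda(g)$,'' and (ii) the dominance lemma --- are stated but not proved, and they are where the actual work lies; (i) in particular needs a careful column-by-column argument. Also note that for part (2) your vanishing $b_\lambda\,\mathbb{C}[S_n]\,a_\mu=0$ is established only after relabelling so that $\lambda$ does not dominate $\mu$, which gives $\mathrm{Hom}(V_\lambda,V_\mu)=0$ in one direction only; this suffices, since isomorphic irreducibles would force a nonzero homomorphism in both directions, but it is worth saying explicitly that you are not claiming $c_\lambda\,\mathbb{C}[S_n]\,c_\mu=0$ symmetrically. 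With (i) and (ii) supplied, this is a complete and correct proof, and it usefully fills a gap the paper leaves open.
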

For a proof of this result, see. We now illustrate the above theorem for the example of $S_3$. 
\begin{example}(Example of $S_3$) 
For $n = 3,$ we have three partitions:
\[
 (3), \,\, (2,1), \,\, (1,1,1). 
\]
We now explain representations for each of these partitions. 

\noindent {\bf $\lambda = (3)$:} We have $R_{(3)} = S_3$ and $C_{(3)} = (e).$ Therefore 
\[
c_{(3)} = \sum_{\sigma \in S_3}\sigma \in \mathbb C[S_3]. 
\]
for this case. We note that $\tau c_{(3)} = c_{(3)}.$ Hence $V_{(3)}$ is one dimensional representation of $S_3$ generated by $c_{(3)}.$ 

\noindent {\bf $\lambda = (1,1,1)$:} For this case, $R_{(1,1,1)} = (e)$ and $C_{(1,1,1)} = S_3.$ Hence
\[
c_{(1,1,1)} = \sum_{\sigma \in S_3} \mathrm{sgn}(\sigma) \sigma. 
\]
We note that $\tau c_{(1,1,1)} = \mathrm{sgn}(\tau) c_{(1,1,1)}.$ Hence $V_{(1,1,1)}$ is the non-trivial one dimensional  representation of $S_3$.

\noindent{ $\lambda = (2,1)$:} For this case $R_{(2,1)} = S_{1,2}$ and $C_{(2,1)} = S_{1,3}$. Hence we have $a_{(2,1)} = e + (12)$ and $b_{(2,1)} = e-(13).$ Hence
\[
c_{(2,1)} = e + (12) - (13) - (12)(13).   
\]
We note that $(12)c_{(12)} = c_{(12)}$ and $(13)c_{(12)} = --e + (13) + (13)(12) - (23).$ Since $S_3$ is generated by $(12)$ and $(13)$, we obtain that $V_{(21)}$ is a two dimensional representation space. Let $\rho: S_3 \rightarrow \mathrm{GL}(V_{(2,1)}$ be the corresponding representation of $S_3$ on $V_{(2,1)}$.
The matrices of $(12)$ and $(13)$ with respect to the basis $\{ c_{(2,1)}, (13)c_{(2,1)}\}$ are given by:
$\rho((12)) = \begin{bmatrix}
    0 & -1 \\ 1 & -1
\end{bmatrix}$
and $\rho((13)) = \begin{bmatrix}
    0 & 1 \\ 1 & 0
\end{bmatrix}.$ By considering the character values of this representation, we observe that it agrees with the already constructed character of $S_3$ in discussion following \Cref{thm:dimension-square}. 
\end{example}
This example illustrates that although \Cref{thm:Sn-construction-1}, gives a construction of representations of $S_n$, it is still difficult to work with it. For this reason, we find many different constructions of representations of $S_n$ to cater different problems in hand. For completeness, we include below the expression of the dimension of $V_{\lambda}$'s. See~\cite{MR1824028} for a proof.  
\begin{definition} 
 {\bf Hook number of a box of  a young diagram: } For $\lambda \vdash n$, consider the young diagram $T_\lambda$. The hook number of any box of $T_\lambda$ is the number of boxes exactly on the right and exactly below the box including the box itself. 
  \end{definition} 
  In the next example, we write the hook numbers of each box in the box itself. 
 
   \begin{center} 
  \begin{tabular}{c@{\hskip 2cm}c}
  \begin{ytableau}
        7&6&4&2&1\\
        4&3&1\\
        2&1
    \end{ytableau}
  \end{tabular}
  \end{center}  
 \bigskip
  Let $h_\lambda$  be the products of all hook numbers of $T_\lambda$. In the above example $h_\lambda = 7.6.4.4.3.2.2.1.1$ Then 
   \[
   \text{dim} (V_\lambda)=\frac{n !}{h_\lambda}.
   \]
   This is called the {\bf hook length formula}.

\subsection{Representation theory of Hyperoctahedral groups} 
In this section, we discuss the representation theory of finite Coxeter groups of type $B_n$. These groups are also called the Hyperoctahedral groups. Recall that for an integer $n\geq 2$, the hyperoctahedral group $B_n$ is defined as the semi-direct product $(\mathbb{Z}/2\mathbb{Z})^n \rtimes S_n$, where the action of $S_n$ on $(\mathbb{Z}/2\mathbb{Z})^n $ is given as follows:
\[
\sigma . (a_1, \cdots, a_n)=(a_{\sigma(1)}, \cdots, a_{\sigma(n)}) \quad \forall \quad \sigma \in S_n , a_i \in \{\pm 1\}
\]
Since $(\mathbb{Z}/2\mathbb{Z})^n$ is a normal abelian subgroup of $B_n$, we can apply Wigner-Mackey's little group method for constructing the irreducible representations of $B_n$.

To describe this method, we need two more important concepts from the representations of finite groups:  {\it induced representation} and {\it restriction}. We define these below. 
\begin{defn}(Induced representation)
Let $H$ be a subgroup of a finite group G, and let $(\psi, U)$ be a representation
of $H$. 
Let $$V = \{f : G \rightarrow U | f(hg) = \psi(h) f(g), h \in H, g \in G \}.$$ Then $G$ acts on
$V$ by right translation, that is, $\phi : G \rightarrow \mathrm{Aut}(V)$ is defined by
\[
[\phi(g)(f )] (g') = f (g'g) \quad g, g' \in G, f \in V. 
\]
It follows that $(\phi, V)$ is a representation of $G$. This is called the induced representation of $(\psi, U)$ from $H$ to $G$, denoted by $\mathrm{Ind}_H^G(\psi)$.  
\end{defn}
 By definition, we have $$\mathrm{dim}(\mathrm{Ind}^G_{H}(\psi)) = |G/H|.\mathrm{dim}(\psi).$$ 
 For a representation $(\phi, V)$, restricting the action of $\phi$ on a subgroup $H$ of $G$, we obtain a representation of $H$, denoted by $\mathrm{Res}^G_H(\phi)$. This representation is called the restriction of $(\phi, V)$ from $G$ to $H$. The following result related these two notions of the representations. 
 \begin{theorem}(Frobenius reciprocity)
 \label{thm-frobenius-reciprocity}
Let $\phi$ be a representation of group $G$ and $\rho$ be a representation of $H$. Then
\[
\langle \mathrm{Ind}_H^G(\rho), \phi  \rangle_G = \langle \rho, \mathrm{Res}_H^G(\phi) \rangle_H.    
\]
 \end{theorem}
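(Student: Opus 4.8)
The plan is to prove the identity in two stages: first reduce it to a comparison of dimensions of $\mathrm{Hom}$-spaces, and then establish the relevant $\mathrm{Hom}$-space isomorphism directly from the function-model definition of $\mathrm{Ind}_H^G$ given above.

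For the first stage I would record the elementary fact that for any finite group $K$ and any two complex representations $A,B$ of $K$ one has $\langle \chi_A,\chi_B\rangle_K=\dim_{\mathbb C}\mathrm{Hom}_K(A,B)$. This follows by writing $A\cong\bigoplus_{\tau\in\mathrm{Irr}(K)}\tau^{\oplus a_\tau}$ and $B\cong\bigoplus_{\tau}\tau^{\oplus b_\tau}$ via Maschke's theorem, expanding $\chi_A,\chi_B$ in the orthonormal basis of irreducible characters (\Cref{prop:identify-irreucible}), and using Schur's Lemma to get $\dim_{\mathbb C}\mathrm{Hom}_K(\tau,\tau')=\delta_{\tau,\tau'}$; both sides then equal $\sum_\tau a_\tau b_\tau$. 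In particular this pairing is symmetric in $A$ and $B$. Hence it suffices to prove
\[
\dim_{\mathbb C}\mathrm{Hom}_G\big(\phi,\ \mathrm{Ind}_H^G(\rho)\big)=\dim_{\mathbb C}\mathrm{Hom}_H\big(\mathrm{Res}_H^G(\phi),\ \rho\big).
\]

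For the second stage I would exhibit a natural linear isomorphism between these two spaces. Write $(\phi,W)$ for the $G$-representation, $(\rho,U)$ for the $H$-representation, and $V=\mathrm{Ind}_H^G(\rho)=\{f:G\to U\mid f(hg)=\rho(h)f(g)\}$ with $(\phi_V(g)f)(g')=f(g'g)$. The evaluation-at-identity map $\mathrm{ev}_e:V\to U$, $f\mapsto f(e)$, is $H$-equivariant since $\mathrm{ev}_e(\phi_V(h)f)=f(h)=\rho(h)f(e)$, so $T\mapsto \mathrm{ev}_e\circ T$ carries $\mathrm{Hom}_G(W,V)$ into $\mathrm{Hom}_H(\mathrm{Res}_H^G W,U)$. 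Conversely, given an $H$-map $S:\mathrm{Res}_H^G W\to U$, define $\widehat S:W\to V$ by $\widehat S(w)(g)=S(\phi(g)w)$; one checks $\widehat S(w)$ indeed lies in $V$ (using $H$-equivariance of $S$), that $\widehat S$ is $G$-equivariant, and that $T\mapsto\mathrm{ev}_e\circ T$ and $S\mapsto\widehat S$ are mutually inverse. Combining the two stages, $\langle\chi_{\mathrm{Ind}_H^G\rho},\chi_\phi\rangle_G=\dim_{\mathbb C}\mathrm{Hom}_G(\mathrm{Ind}_H^G\rho,\phi)=\dim_{\mathbb C}\mathrm{Hom}_G(\phi,\mathrm{Ind}_H^G\rho)=\dim_{\mathbb C}\mathrm{Hom}_H(\mathrm{Res}_H^G\phi,\rho)=\langle\chi_\rho,\chi_{\mathrm{Res}_H^G\phi}\rangle_H$, which is the asserted identity.

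All steps are routine; the only point requiring care is the bookkeeping with the left/right conventions built into the stated definition of $\mathrm{Ind}_H^G$ (the rule $f(hg)=\rho(h)f(g)$ together with the right-translation action), which must be tracked consistently when verifying equivariance of $\mathrm{ev}_e$ and $\widehat S$ and the inverse relations. An alternative — the route taken in \cite{MR450380} — is to first derive the induced-character formula $\chi_{\mathrm{Ind}_H^G\rho}(g)=\tfrac{1}{|H|}\sum_{x\in G,\ x^{-1}gx\in H}\chi_\rho(x^{-1}gx)$ by computing $\tr(\phi_V(g))$ on a basis indexed by coset representatives, then substitute into $\langle\chi_{\mathrm{Ind}_H^G\rho},\chi_\phi\rangle_G$, change variables $y=x^{-1}gx$, and use that $\chi_\phi$ is a class function to collapse the resulting double sum over $G\times H$ to $\tfrac{1}{|H|}\sum_{y\in H}\chi_\rho(y)\overline{\chi_\phi(y)}$. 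I would present the $\mathrm{Hom}$-space argument as the main proof, since all of its ingredients — Maschke's theorem, Schur's Lemma, orthonormality of irreducible characters, and the function model of induced representations — are already available above.
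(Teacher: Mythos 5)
Your proposal is correct and complete. Note, however, that the paper itself offers no proof of this theorem: Frobenius reciprocity is stated as background, with proofs of such standard facts deferred to \cite{MR450380}, so there is no in-paper argument to compare against. Your two-stage argument is the standard one and all steps check out: the identity $\langle \chi_A,\chi_B\rangle_K=\dim_{\mathbb C}\mathrm{Hom}_K(A,B)$ follows exactly as you say from Maschke, Schur, and orthonormality of irreducible characters (the multiplicities being non-negative integers makes the pairing symmetric despite the complex conjugate in the paper's inner product), and the adjunction $\mathrm{Hom}_G(W,\mathrm{Ind}_H^G U)\cong\mathrm{Hom}_H(\mathrm{Res}_H^G W,U)$ via $T\mapsto\mathrm{ev}_e\circ T$ and $S\mapsto\widehat S$ with $\widehat S(w)(g)=S(\phi(g)w)$ is correctly matched to the paper's conventions ($f(hg)=\rho(h)f(g)$ with right-translation action); the verifications of equivariance and of the two maps being mutually inverse all go through. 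Your argument has the added merit of using only tools already developed in the text, whereas the alternative route you mention (the induced-character formula) would require additional machinery not set up here.
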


\noindent {\bf Wigner-Mackey little group method:} Let $G= A \rtimes H $ be a finite group, such that $A $ is an abelian normal subgroup of $G$. Let $\mathrm{Irr}(A)$ denote the set of equivalence classes of irreducible complex representations of $A$. Since $A$ is abelian, $\mathrm{Irr}(A) \cong A$ and every $\phi \in \mathrm{Irr}(A)$ is one dimensional. 

The group $H$ acts on $\mathrm{Irr}(A)$ via
\[
(h \cdot \phi) (a)=\phi^h(a) := \phi(h^{-1}ah) \quad \text{ for all } a\in A, h \in H.
\]
Let $ \{ \phi_1,\phi_2,\cdots, \phi_k \} $ denote the orbit representatives of this action. Define the subgroup $H_i$ of $H$ as follows
\[
\mathrm{Stab}_{H}(\phi_i)=H_i := \{ h \in H \mid \phi_i^h=\phi_i \}.
\]
Then there exists an extension $\tilde{\phi_i}$ of $\phi_i$ to $A \rtimes H_i$ by
\[
\tilde{\phi_i}(ah)=\phi_i(a) \quad \text{for\,\,  all}\,\,  a\in A, h\in H.
\]
For any $H_i$ and $(\psi , V)\in \mathrm{Irr}(H_i)$, the map $\tilde{\phi_i} \otimes \psi \colon A \rtimes H_i \longrightarrow GL(V)$  defined by 
\[
 (\tilde{\phi_i} \otimes \psi) (a,h) := \tilde{\phi_i}(a,h)\psi(h)
\]
is an irreducible representation of $A \rtimes H_i$. This helps us to describe all irreducible representations of $A \rtimes H$.  
\begin{theorem}
\label{thm:mackey-wigner-little-group}
Let $\{\phi_1, \cdots, \phi_k \}$ denote the orbit representatives of $\mathrm{Irr}(A)/H$ and $H_i$ be the stabilizers of $\phi_i$ in $H$. 
\begin{enumerate} 
\item Any irreducible representation of $G=A \rtimes H$ is of the form $\mathrm{Ind}_{A \rtimes H_i}^{G}(\tilde{\phi_i} \otimes \psi)$ for $1 \leq i \leq k$ and $\psi \in \mathrm{Irr}(H_i)$.
\item $\mathrm{Ind}_{A \rtimes H_i}^{G}(\tilde{\phi_i} \otimes \psi) \cong \mathrm{Ind}_{A \rtimes H_j}^{G}(\tilde{\phi_j} \otimes \psi')$ if and only if $i=j$ and $\psi\cong \psi'$. 
\end{enumerate}
\end{theorem}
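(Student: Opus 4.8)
The plan is to write down the candidate family $\{\mathrm{Ind}_{A \rtimes H_i}^{G}(\tilde{\phi_i}\otimes\psi)\}$, where $1\le i\le k$ and $\psi\in\mathrm{Irr}(H_i)$, show that every member is irreducible, that distinct members are inequivalent, and then verify by a dimension count that the family is exhaustive; parts (1) and (2) then fall out together. Beyond Frobenius reciprocity (\Cref{thm-frobenius-reciprocity}) and the orthonormality results recalled above, the one external input needed is Mackey's subgroup theorem --- the decomposition of $\mathrm{Res}^G_K\mathrm{Ind}^G_L$ into a sum over double cosets $K\backslash G/L$ --- together with its corollary, Mackey's irreducibility criterion; both may be cited from \cite{MR450380}. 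The point that makes everything work smoothly here is that $A$ is \emph{normal} in $G$, so $A$ lies inside every relevant subgroup intersection and conjugation by $g\in G$ simply moves the $A$-character $\phi_i$ to $\phi_i^g$.

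First I would record the structural bookkeeping. For each orbit representative $\phi_i$ the subgroup $A\rtimes H_i\le G$ satisfies $[G:A\rtimes H_i]=[H:H_i]=|H\cdot\phi_i|$, and since $A\trianglelefteq G$ one has $A\subseteq (A\rtimes H_i)\cap g(A\rtimes H_i)g^{-1}$ for every $g$. Moreover $\mathrm{Res}^{A\rtimes H_i}_A(\tilde{\phi_i}\otimes\psi)$ is $\phi_i$-isotypic, while for $g\notin A\rtimes H_i$ the conjugate representation $(\tilde{\phi_i}\otimes\psi)^g$ restricts on $A$ to a multiple of $\phi_i^g$, and $\phi_i^g\neq\phi_i$ by definition of $H_i$. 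Hence $\tilde{\phi_i}\otimes\psi$ and its conjugate share no irreducible constituent on any subgroup containing $A$; combined with the irreducibility of $\tilde{\phi_i}\otimes\psi$ recalled above, Mackey's irreducibility criterion gives that $\mathrm{Ind}_{A\rtimes H_i}^{G}(\tilde{\phi_i}\otimes\psi)$ is irreducible.

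Next, to compare two such representations I would apply Frobenius reciprocity and then Mackey's formula to expand
\[
\big\langle \mathrm{Ind}_{A\rtimes H_i}^{G}(\tilde{\phi_i}\otimes\psi),\ \mathrm{Ind}_{A\rtimes H_j}^{G}(\tilde{\phi_j}\otimes\psi')\big\rangle_G
\]
as a sum over double cosets of $A\rtimes H_i$ and $A\rtimes H_j$ in $G$. Restricting each summand to $A$ shows that a nonzero contribution forces $\phi_i$ and $\phi_j$ to lie in the same $H$-orbit, i.e. $i=j$; and when $i=j$ only the double coset of the identity contributes, giving $\langle\tilde{\phi_i}\otimes\psi,\ \tilde{\phi_i}\otimes\psi'\rangle_{A\rtimes H_i}=\langle\psi,\psi'\rangle_{H_i}$ since $\tilde{\phi_i}$ is a common one-dimensional twist. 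Thus the inner product equals $\delta_{ij}\,\langle\psi,\psi'\rangle_{H_i}$, which simultaneously proves (2) and confirms that the candidate family consists of pairwise distinct irreducible characters of $G$.

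Finally I would close the argument by counting. Writing $d_i=[G:A\rtimes H_i]=|H\cdot\phi_i|$, so that $\dim\mathrm{Ind}_{A\rtimes H_i}^{G}(\tilde{\phi_i}\otimes\psi)=d_i\dim\psi$, and using $\sum_{\psi\in\mathrm{Irr}(H_i)}(\dim\psi)^2=|H_i|$, $d_i|H_i|=|H|$, and $\sum_i d_i=|\mathrm{Irr}(A)|=|A|$ (the orbits partition $\mathrm{Irr}(A)\cong A$), we obtain
\[
\sum_{i}\ \sum_{\psi\in\mathrm{Irr}(H_i)}\big(\dim\mathrm{Ind}_{A\rtimes H_i}^{G}(\tilde{\phi_i}\otimes\psi)\big)^2=\sum_i d_i^2|H_i|=|H|\sum_i d_i=|A|\,|H|=|G|.
\]
By \Cref{thm:dimension-square} the family must then be all of $\mathrm{Irr}(G)$, which is (1). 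An alternative route replaces the counting step by Clifford theory: restrict an irreducible $\rho$ of $G$ to $A$, observe that $G$ permutes its isotypic components transitively, deduce $\rho\cong\mathrm{Ind}_{A\rtimes H_i}^{G}(\rho_{\phi_i})$ where $\rho_{\phi_i}$ is the $\phi_i$-isotypic part, and then untwist by $\tilde{\phi_i}$ to obtain a $\psi\in\mathrm{Irr}(H_i)$. Either way, the main obstacle is the invocation (or, for a self-contained text, the short re-derivation) of Mackey's subgroup theorem and irreducibility criterion in this semidirect-product setting; once those are in hand, the remaining steps are routine manipulations with orbits, stabilizers, and the isotypic decomposition over the normal subgroup $A$.
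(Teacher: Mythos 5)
The paper states this theorem without proof --- it is quoted as the Wigner--Mackey little group method, with the reader implicitly referred to \cite{MR450380} --- so there is no in-text argument to compare against. Your proposal is a correct and complete proof, and it is essentially the standard one (the argument in Serre, \S 8.2): irreducibility of each induced representation via Mackey's criterion (using that $A$ is normal, so $A$ sits inside every subgroup intersection and the restrictions to $A$ are multiples of the distinct characters $\phi_i$ and $\phi_i^g$), pairwise inequivalence via Frobenius reciprocity plus the Mackey double-coset formula, and exhaustiveness by the dimension count $\sum_i d_i^2|H_i| = |H|\sum_i d_i = |A||H| = |G|$ combined with \Cref{thm:dimension-square}. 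All the individual computations check out, including $\langle \tilde{\phi_i}\otimes\psi,\ \tilde{\phi_i}\otimes\psi'\rangle_{A\rtimes H_i} = \langle\psi,\psi'\rangle_{H_i}$ (valid because $|\phi_i(a)|=1$) and $\sum_i d_i = |\mathrm{Irr}(A)| = |A|$. Two small remarks: you cite the irreducibility of $\tilde{\phi_i}\otimes\psi$ as "recalled above," but this deserves a one-line justification (its restriction to $H_i$ is already $\psi$, which is irreducible); and in the Mackey step you should say explicitly that the double cosets of $A\rtimes H_i$ and $A\rtimes H_j$ in $G$ are indexed by $H_i\backslash H/H_j$, with the coset of $h$ contributing only when $\phi_i^h = \phi_j$, which forces $i=j$ and $h\in H_i$. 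Your closing alternative via Clifford theory (transitivity of $G$ on the $A$-isotypic components of an irreducible) is the route Serre actually takes for surjectivity, so either version would serve the paper well as an appendix proof.
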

The following is an immediate corollary of the above result. 
\begin{corollary}
\label{cor:Bn correspondance}
    \[
    \mathrm{Irr}(G) \longleftrightarrow \sqcup_{i=1}^k \{ \psi \mid \psi \in \mathrm{Irr}(H_i)\}.
    \]
\end{corollary}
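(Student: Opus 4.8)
The plan is to deduce Corollary~\ref{cor:Bn correspondance} directly from Theorem~\ref{thm:mackey-wigner-little-group}, which is already the substantive statement. First I would note that part (1) of the theorem says every irreducible representation of $G = A \rtimes H$ arises as $\mathrm{Ind}_{A \rtimes H_i}^{G}(\tilde{\phi_i} \otimes \psi)$ for some orbit representative $\phi_i$ with $1 \le i \le k$ and some $\psi \in \mathrm{Irr}(H_i)$; this gives a surjection from the disjoint union $\sqcup_{i=1}^k \{\psi \mid \psi \in \mathrm{Irr}(H_i)\}$ onto $\mathrm{Irr}(G)$, since the choice of $i$ together with $\psi$ determines the pair $(\tilde{\phi_i}\otimes\psi)$ and hence the induced representation, and every member of $\mathrm{Irr}(G)$ is hit.

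Next I would invoke part (2): $\mathrm{Ind}_{A \rtimes H_i}^{G}(\tilde{\phi_i} \otimes \psi) \cong \mathrm{Ind}_{A \rtimes H_j}^{G}(\tilde{\phi_j} \otimes \psi')$ holds if and only if $i = j$ and $\psi \cong \psi'$. This is precisely the statement that the surjection above is injective: two elements of the disjoint union $\sqcup_{i=1}^k \mathrm{Irr}(H_i)$, say $\psi$ sitting in the $i$-th block and $\psi'$ sitting in the $j$-th block, map to the same isomorphism class in $\mathrm{Irr}(G)$ exactly when they are the same element of the disjoint union (same block $i=j$, same class $\psi \cong \psi'$). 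Combining surjectivity and injectivity yields the claimed bijection $\mathrm{Irr}(G) \longleftrightarrow \sqcup_{i=1}^k \{\psi \mid \psi \in \mathrm{Irr}(H_i)\}$.

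There is essentially no obstacle here: the corollary is a bookkeeping reformulation of the two clauses of the preceding theorem, and the only mild point worth spelling out is that the element of the disjoint union carries the datum of which index $i$ it came from, so that $\psi \in \mathrm{Irr}(H_i)$ and $\psi \in \mathrm{Irr}(H_j)$ for $i \ne j$ are to be regarded as distinct formal symbols even if the groups $H_i$ and $H_j$ happen to be isomorphic. With that convention understood, the map $(i,\psi) \mapsto \mathrm{Ind}_{A\rtimes H_i}^G(\tilde\phi_i \otimes \psi)$ is the desired well-defined bijection, and I would simply write one or two sentences making the forward and backward correspondences explicit and citing Theorem~\ref{thm:mackey-wigner-little-group}(1) for surjectivity and (2) for injectivity.
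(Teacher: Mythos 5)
Your proposal is correct and matches the paper's intent exactly: the paper states this as "an immediate corollary" of Theorem~\ref{thm:mackey-wigner-little-group} and gives no further argument, and your deduction of surjectivity from part (1) and injectivity from part (2) is precisely the bookkeeping that makes it immediate.
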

We use this method to give a construction of the irreducible representations of $B_n$.

\subsection{Characters of $(\mathbb{Z}/2\mathbb{Z})^n$ and their stabilizers in $S_n$}
We first describe the characters of the normal abelian subgroup $(\mathbb{Z}/2\mathbb{Z})^n$ of $B_n$.  

Let $\mathbb{Z}/2\mathbb{Z} = \{ \pm 1\}.$ For $ x \in  \mathbb{Z}/2\mathbb{Z}$ and  $a \in \{0, 1\}$, define a character $\psi_{a}$ of $\mathbb{Z}/2\mathbb{Z}$ by
\[
\psi_{a}(x)=(x)^{a}.
\]
For $(a_1,a_2,\dots,a_n) \mid a_i\in \{0,1 \}$, we have $\psi_{a_1}\times \psi_{a_2} \times \dots\times \psi_{a_n} $ is a character of $(\mathbb{Z}/2\mathbb{Z})^n$.
In fact we have
\[
 \{ (a_1,a_2,\dots,a_n) \mid a_i\in \{\pm{1} \} \} \cong \mathrm{Irr}((\mathbb{Z}/2\mathbb{Z})^n), 
\]
where bijection is obtained by mapping $(a_1,a_2,\dots,a_n)$ to $\psi_{a_1}\times \psi_{a_2} \times \dots\times \psi_{a_n} $. 
Any character of $(\mathbb{Z}/2\mathbb{Z})^n$ is of the form $\psi_{(a_1, a_2, \cdots, a_n)}$, where $\psi_{(a_1, a_2, \cdots, a_n)} =\psi_{a_1}\times \psi_{a_2} \times \dots\times \psi_{a_n} $ for some $a_i's$. 
 For $\sigma \in S_n,$
 we have 
 \begin{eqnarray} 
 \psi_{(a_1, a_2, \cdots, a_n)}^\sigma ((x_1, x_2, \cdots, x_n)) & = &  (\psi_{a_1}\times \psi_{a_2} \times \dots\times \psi_{a_n}) ((x_{\sigma(1)}, x_{\sigma(2)}, \cdots, x_{\sigma(n})) \nonumber \\ 
 & = & \prod_i \psi_{a_i}(x_{\sigma(a_i)}) \nonumber \\
 & = & \prod_i \psi_{\sigma^{-1}(a_i)}(x_{i}). 
 \end{eqnarray}

Hence $S_n$ acts on $ \psi_{(a_1, a_2, \cdots, a_n)}$ by permuting $a_i's$. Denote by $\phi_i$ an orbit representatives of this action. Then
\[
\mathrm{Stab}_{S_n}\phi_i=S_i \times S_{n-i}.
\]
By \Cref{cor:Bn correspondance}, we have 
\[
\mathrm{Irr}(B_n) \longleftrightarrow \cup_{i=0}^n \{(\phi,\psi) \mid \phi \in \mathrm{Irr}(S_i),\,\, \psi \in \mathrm{Irr}(S_{n-i})\}.
\]
\begin{definition}[Complementary Partitions] Two partitions $\lambda \vdash a$ and $\mu \vdash b$ with $a,b \geq 0$ are said to be complementary partitions of an integer $n$ if $a+b=n$. 
    
\end{definition}
An ordered pair $(\lambda,\mu)$ of complementary partitions of $n$ is denoted by $(\lambda,\mu) \models n$. For $(\lambda,\mu)\models n$ such that $\lambda \vdash a$ and $\mu \vdash b$, we have a representation $V_{\lambda}\otimes V_{\mu}$ of the group $S_a \times S_b$. Let $U_{(\lambda,\mu)} = \mathrm{Ind}_{S_a \times S_b} (V_\lambda \times V_\mu)$. By \Cref{thm:mackey-wigner-little-group}, we have the following: 
\begin{theorem}
\label{thm:irr-Bn}
    The set $\{ U_{\lambda,\mu} \mid (\lambda,\mu) \models n\}$ is a complete set of mutually in-equivalent irreducible representations of $B_n$.
\end{theorem}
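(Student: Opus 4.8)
The plan is to derive Theorem~\ref{thm:irr-Bn} as a direct specialization of the Wigner--Mackey little group machinery recorded in \Cref{thm:mackey-wigner-little-group} and \Cref{cor:Bn correspondance}, applied to the decomposition $B_n = (\mathbb{Z}/2\mathbb{Z})^n \rtimes S_n$ with $A = (\mathbb{Z}/2\mathbb{Z})^n$ normal and abelian. First I would recall that, as established in the preceding subsection, the characters of $A$ are the $\psi_{(a_1,\dots,a_n)}$ with $a_i \in \{0,1\}$, and that the $S_n$-action on $\mathrm{Irr}(A)$ is by permuting the coordinates $a_i$; hence an orbit representative is determined by the number $i$ of coordinates equal to $1$, giving orbit representatives $\phi_0,\phi_1,\dots,\phi_n$ with stabilizer $\mathrm{Stab}_{S_n}(\phi_i) = S_i \times S_{n-i}$ (the subgroup fixing the set of ``$1$''-positions). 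This already yields, via \Cref{cor:Bn correspondance}, the bijection
\[
\mathrm{Irr}(B_n) \longleftrightarrow \bigsqcup_{i=0}^n \{(\phi,\psi) \mid \phi \in \mathrm{Irr}(S_i),\ \psi \in \mathrm{Irr}(S_{n-i})\}.
\]

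Next I would translate this bijection into the language of pairs of complementary partitions. By \Cref{thm:Sn-construction-1}, $\mathrm{Irr}(S_i) = \{V_\lambda \mid \lambda \vdash i\}$ and $\mathrm{Irr}(S_{n-i}) = \{V_\mu \mid \mu \vdash n-i\}$, so the disjoint union over $i$ of pairs $(\phi,\psi)$ is exactly indexed by ordered pairs $(\lambda,\mu)$ with $\lambda \vdash i$, $\mu \vdash n-i$ for some $0 \le i \le n$, i.e. by $(\lambda,\mu) \models n$. For such a pair, part (1) of \Cref{thm:mackey-wigner-little-group} produces the irreducible $B_n$-representation $\mathrm{Ind}_{A \rtimes (S_i \times S_{n-i})}^{B_n}(\tilde{\phi_i} \otimes (V_\lambda \otimes V_\mu))$; I would then observe that this is precisely what is abbreviated as $U_{(\lambda,\mu)}$ (after noting $\tilde{\phi_i}$ is the canonical extension of $\phi_i$ to $A \rtimes (S_i\times S_{n-i})$, so that the induced module agrees with $\mathrm{Ind}_{S_i \times S_{n-i}}(V_\lambda \otimes V_\mu)$ twisted by the sign characters recorded in $\phi_i$). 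Completeness and mutual inequivalence then follow immediately from parts (1) and (2) of \Cref{thm:mackey-wigner-little-group}: every irreducible of $B_n$ arises this way, and $U_{(\lambda,\mu)} \cong U_{(\lambda',\mu')}$ forces the orbit indices to agree (so $|\lambda| = |\lambda'|$) and then $V_\lambda \cong V_{\lambda'}$, $V_\mu \cong V_{\mu'}$, hence $\lambda = \lambda'$ and $\mu = \mu'$ by part (2) of \Cref{thm:Sn-construction-1}.

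The only genuinely non-formal points, which I would treat as the main obstacle, are the two ``identification'' steps: verifying that $\mathrm{Stab}_{S_n}(\phi_i)$ really is a conjugate of $S_i \times S_{n-i}$ (which needs the explicit description of the coordinate-permutation action on $\psi_{(a_1,\dots,a_n)}$ computed in the previous subsection, so that an orbit representative can be chosen with its ``$1$''-positions being $\{1,\dots,i\}$), and checking that the canonical extension $\tilde{\phi_i}$ exists and behaves well enough that tensoring with $V_\lambda \otimes V_\mu$ and inducing produces exactly the module named $U_{(\lambda,\mu)}$. Both of these are bookkeeping rather than deep, but they are where care is required; everything else is a mechanical application of the cited theorems. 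Finally, as a sanity check one may compare dimensions: $\sum_{(\lambda,\mu)\models n} (\dim U_{(\lambda,\mu)})^2 = \sum_{i=0}^n \binom{n}{i}^2 \sum_{\lambda\vdash i}\sum_{\mu\vdash n-i}\!\big(i!/h_\lambda\big)^2\big((n-i)!/h_\mu\big)^2$, which one verifies equals $|B_n| = 2^n n!$, consistent with \Cref{thm:dimension-square}.
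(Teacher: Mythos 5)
Your proposal is correct and follows essentially the same route as the paper, which obtains \Cref{thm:irr-Bn} directly from the Wigner--Mackey little group method (\Cref{thm:mackey-wigner-little-group} and \Cref{cor:Bn correspondance}) applied to $B_n=(\mathbb{Z}/2\mathbb{Z})^n\rtimes S_n$, using the orbit representatives $\phi_i$ with stabilizers $S_i\times S_{n-i}$ and the parametrization of $\mathrm{Irr}(S_i)\times\mathrm{Irr}(S_{n-i})$ by complementary partitions. Your added dimension check $\sum_{(\lambda,\mu)\models n}(\dim U_{(\lambda,\mu)})^2=\sum_{i=0}^n\binom{n}{i}^2\,i!\,(n-i)!=2^n n!$ is a nice confirmation not present in the paper.
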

By the dimension formulae for the induced representation, we have $$\mathrm{dim}(U_{(\lambda,\mu)}) = \frac{|S_n|}{|S_a||S_b|} \text{dim}(V_\lambda) \text{dim}(V_\lambda).$$
By \Cref{thm:number-irr-conj} and \Cref{thm:irr-Bn} , we obtain the following: 
\begin{theorem}
    The set of conjugacy classes of $B_n$ is naturally bijective with the set of pairs of complementary partitions of $S_n$.
\end{theorem}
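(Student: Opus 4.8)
Before constructing a natural bijection I note that the cardinality statement is already in hand: by \Cref{thm:number-irr-conj} the number of conjugacy classes of $B_n$ equals $|\mathrm{Irr}(B_n)|$, and by \Cref{thm:irr-Bn} the latter equals the number of pairs $(\lambda,\mu)\models n$ of complementary partitions. To upgrade this to an intrinsic (rather than merely numerical) correspondence, the plan is to describe the conjugacy classes of $B_n$ directly through a \emph{signed cycle type} and then identify that data with complementary partitions of $n$.

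First I would realize $B_n$ as the group of signed permutations, writing an element as $w=(\epsilon,\sigma)$ with $\epsilon=(\epsilon_1,\dots,\epsilon_n)\in\{\pm1\}^n$ and $\sigma\in S_n$, the multiplication being that of the semidirect product $(\mathbb Z/2\mathbb Z)^n\rtimes S_n$. Decompose $\sigma$ into disjoint cycles, fixed points included as $1$-cycles. To a cycle $c$ of $\sigma$ attach its length together with its \emph{sign} $\mathrm{sgn}(c,w):=\prod_{i\in\mathrm{supp}(c)}\epsilon_i\in\{\pm1\}$. The \emph{signed cycle type} of $w$ is then the pair $(\lambda,\mu)$, where $\lambda$ collects the lengths of the cycles of sign $+1$ and $\mu$ collects the lengths of the cycles of sign $-1$; writing $a=|\lambda|$ and $b=|\mu|$ one has $a+b=n$, so $(\lambda,\mu)\models n$.

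The key step is the claim that $w,w'\in B_n$ are conjugate if and only if they have the same signed cycle type. For ``only if'' it suffices to check invariance under conjugation by a generating set. Conjugating $(\epsilon,\sigma)$ by a pure permutation $(\mathbf 1,\tau)$ sends $\sigma$ to $\tau\sigma\tau^{-1}$ and merely relabels the coordinates of $\epsilon$, preserving the length and sign of each cycle. Conjugating by a pure sign change $(\delta,\mathrm{id})$ fixes $\sigma$ and replaces each $\epsilon_i$ by $\delta_i\,\epsilon_i\,\delta_{\sigma(i)}$ (up to the conventions of the excerpt); taking the product around a $\sigma$-cycle, each coordinate of $\delta$ occurs exactly twice, so the $\delta$-contribution is a square and equals $1$, leaving $\prod_{i\in\mathrm{supp}(c)}\epsilon_i$ unchanged. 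As these elements generate $B_n$, the signed cycle type is a conjugacy invariant. For ``if'', I would build a conjugator in two stages: conjugate by a permutation to make the underlying permutations coincide while matching $+$ cycles to $+$ cycles and $-$ cycles to $-$ cycles; then, working one $\sigma$-cycle at a time, conjugate by sign changes — moving a sign along the cycle one step at a time — to normalize $\epsilon$ on that cycle so that it is supported in a single coordinate whose value is the cycle sign. After this normalization, two elements with the same signed cycle type are literally equal.

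Granting the claim, $w\mapsto(\lambda,\mu)$ descends to a well-defined map on conjugacy classes that is injective (by the ``if'' direction) and surjective (given $(\lambda,\mu)\models n$, take $\sigma$ a product of disjoint cycles whose lengths are the parts of $\lambda$ and $\mu$, and choose $\epsilon$ making the $\lambda$-cycles positive and the $\mu$-cycles negative), which yields the asserted natural bijection; it is compatible with the count obtained from \Cref{thm:irr-Bn}. The main obstacle is bookkeeping inside the semidirect product: one must pin down the action and multiplication conventions carefully enough that ``product of signs around a cycle'' is visibly well defined and visibly telescopes under diagonal conjugation. Once that is arranged, both implications of the claim are routine, with the content concentrated in the single-cycle case (where the only invariant is the $B_1$-type sign, matching the two complementary partitions of $1$) and then propagated cycle by cycle.
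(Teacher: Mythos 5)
Your proposal is correct, and it does more than the paper does. The paper's own ``proof'' of this statement is exactly your opening observation: the number of conjugacy classes of $B_n$ equals $|\mathrm{Irr}(B_n)|$ by \Cref{thm:number-irr-conj}, and $\mathrm{Irr}(B_n)$ is parametrized by pairs of complementary partitions by \Cref{thm:irr-Bn}; the paper then explicitly remarks that finding the bijection directly, without invoking representation theory, is ``an interesting exercise.'' Your signed-cycle-type construction is precisely that exercise carried out, and the details check out: with the paper's semidirect product convention, conjugating $(\epsilon,\sigma)$ by a diagonal element $(\delta,\mathrm{id})$ replaces $\epsilon_i$ by $\epsilon_i\delta_i\delta_{\sigma(i)}$, and the product of the $\delta$-factors around a $\sigma$-cycle is a square and hence trivial, so each cycle's sign is a conjugacy invariant; conjugation by permutations merely relabels; and normalizing the sign vector on each cycle to be supported in a single coordinate gives the converse direction. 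What your combinatorial route buys is the word ``naturally'' in the statement --- the counting argument only establishes equal cardinality, whereas the map sending $w$ to its signed cycle type $(\lambda,\mu)$ is an explicit canonical bijection. What the paper's route buys is brevity, since both ingredients are already on the table at that point in the text.
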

It is an interesting exercise to find this bijection directly without invoking the representation theory of $B_n$. 
\begin{example}[Example of $B_3$]
Recall $B_3 = (\mathbb Z/2\mathbb Z \oplus \mathbb Z/2\mathbb Z \oplus \mathbb Z/2\mathbb Z) \rtimes S_2.$ The characters of $\mathbb Z/2\mathbb Z \oplus \mathbb Z/2\mathbb Z \oplus \mathbb Z/2\mathbb Z$ under the action of $S_3$ have four orbits, namely $\psi_{(0,0,0)},$ $\psi_{(0,0,1)}$, $\psi_{(0,1,1)}$,  $\psi_{(1,1,1)}.$ We now discuss the contribution of each of these to the representations of $B_3$. 
\begin{itemize}
    \item For $\psi_{(0,0,0)}$ and $\psi_{(1,1,1)}$: In both of these case, the stabilizer is $S_3$. The group $S_3$ has two representations of dimension one and one representation of dimension two. Hence from these two orbits, we obtain four representations of dimension one and two representations of dimension two.  \item For $\psi_{(0,0,1)}$ and $\psi_{(0,1,1)}:$ 
    In this case, the stabilizer is isomorphic to the group $S_2.$ Since $S_2$ has two representations of dimension one. Hence, we obtain four representations of dimension three from these two cases. 
\end{itemize}

\end{example}

\subsection{A construction of representations of $D_n$}
In this section, we briefly mention about the construction of irreducible representations of $D_n$.    
Recall that, $D_n = K \rtimes S_n$, where $K$ is the Kernel of the map: 
$$\mathrm{det}: \{\left(\begin{smallmatrix}
    a_1 & 0 & \dots & 0\\
    0 & a_2 & \dots & 0\\
    \dots & \dots & \dots & \dots\\
    0 & 0 & \dots & a_n
\end{smallmatrix}\right) \}  \rightarrow \{ \pm 1\}.$$
We note that $D_n$ is an index two, hence normal, subgroup of $B_n$. Therefore, the representation theory of $D_n$ is closely related to the representation theory of $B_n$ due to the following result.

\begin{theorem}
    Let $G$ be a group and $N$ be its normal subgroup of index $2$. One of the following holds for any  $\rho \in \mathrm{Irr}(G)$: 
    \begin{enumerate}
        \item[(a)] The representation $\mathrm{Res}_{N}^{G}\rho$ is irreducible. In this case we denote $\mathrm{Res}_{N}^{G}\rho$ by $\rho^0$.
        \item[(b)] The representation $\mathrm{Res}_{N}^{G}\rho$ satisfies $\mathrm{Res}_{N}^{G}\rho \cong \rho^+ \oplus \rho^-$, where $\rho^+$ and $\rho^-$ are irreducible representations of $N$ and $\rho^+ \not\cong \rho^-$.
        \end{enumerate}
         Hence, every irreducible representation of $N$ is equivalent to either $\rho^0$, $\rho^{+}$, or $\rho^-$ for some $\rho \in \mathrm{Irr}(G)$.
   
\end{theorem}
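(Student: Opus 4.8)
The plan is to use Clifford theory for the index‑two subgroup $N \trianglelefteq G$, together with Frobenius reciprocity and Mackey's criterion. Let $\varepsilon$ denote the nontrivial one‑dimensional representation of $G$ that is trivial on $N$; concretely $\varepsilon$ comes from the quotient $G/N \cong \Z/2\Z$. The starting observation is that for $\rho \in \mathrm{Irr}(G)$ the restriction $\mathrm{Res}_N^G \rho$ is a representation of $N$ of the same dimension, and by Maschke's theorem it decomposes as a direct sum of irreducibles; the whole content of the statement is that at most two distinct irreducibles occur and each with multiplicity one, so there are exactly the two listed possibilities.

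First I would compute $\langle \mathrm{Res}_N^G \rho, \mathrm{Res}_N^G \rho \rangle_N$ using Frobenius reciprocity: $\langle \mathrm{Res}_N^G \rho, \mathrm{Res}_N^G \rho \rangle_N = \langle \rho, \mathrm{Ind}_N^G \mathrm{Res}_N^G \rho \rangle_G$. The key algebraic input is the projection formula (or a direct character computation): for a normal subgroup $N$ of index $2$ one has $\mathrm{Ind}_N^G \mathrm{Res}_N^G \rho \cong \rho \oplus (\rho \otimes \varepsilon)$. This follows because the character of $\mathrm{Ind}_N^G \mathrm{Res}_N^G \rho$ vanishes off $N$ and agrees with $2\chi_\rho$ on $N$, which is exactly the character of $\rho \oplus (\rho\otimes\varepsilon)$. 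Therefore $\langle \mathrm{Res}_N^G \rho, \mathrm{Res}_N^G \rho \rangle_N = 1 + \langle \rho, \rho \otimes \varepsilon \rangle_G$, and this inner product is either $1$ or $2$ according as $\rho \otimes \varepsilon \not\cong \rho$ or $\rho \otimes \varepsilon \cong \rho$.

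Next I would split into these two cases. If $\rho \otimes \varepsilon \not\cong \rho$, then $\langle \mathrm{Res}_N^G \rho, \mathrm{Res}_N^G \rho \rangle_N = 1$, so $\mathrm{Res}_N^G \rho$ is irreducible; call it $\rho^0$. If $\rho \otimes \varepsilon \cong \rho$, then $\langle \mathrm{Res}_N^G \rho, \mathrm{Res}_N^G \rho \rangle_N = 2$, so $\mathrm{Res}_N^G \rho \cong \rho^+ \oplus \rho^-$ with $\rho^{\pm}$ irreducible; one must still check $\rho^+ \not\cong \rho^-$, which is forced because inner product $2$ with two summands means the two summands are non‑isomorphic (if they were isomorphic the inner product would be $4$). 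For the final clause — that every irreducible of $N$ arises this way — I would invoke Frobenius reciprocity once more: given $\tau \in \mathrm{Irr}(N)$, since $\langle \mathrm{Res}_N^G \mathrm{Ind}_N^G \tau, \tau\rangle_N = \langle \mathrm{Ind}_N^G\tau, \mathrm{Ind}_N^G\tau\rangle_N \geq 1$ is nonzero, some irreducible constituent $\rho$ of $\mathrm{Ind}_N^G \tau$ satisfies $\langle \mathrm{Res}_N^G\rho, \tau\rangle_N \neq 0$, so $\tau$ is a constituent of $\mathrm{Res}_N^G\rho$ and hence equals $\rho^0$, $\rho^+$, or $\rho^-$.

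The main obstacle is the identity $\mathrm{Ind}_N^G \mathrm{Res}_N^G \rho \cong \rho \oplus (\rho \otimes \varepsilon)$ and, relatedly, making the case analysis airtight — in particular ruling out the a priori possibility that $\mathrm{Res}_N^G\rho$ could break into more than two pieces or acquire multiplicities larger than one. Both are controlled entirely by the inner‑product computation above, so once the projection formula is in hand the rest is bookkeeping; I would present the projection formula via the explicit character computation on and off $N$ rather than quoting a general transfer/Mackey statement, since that keeps the argument self‑contained at the level of this exposition.
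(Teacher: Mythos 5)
Your proof is correct. Note that the paper itself states this theorem without proof (it is quoted as a standard fact from Clifford theory and then applied to $D_n \trianglelefteq B_n$), so there is no in-text argument to compare against; your write-up in fact supplies the missing justification. The route you take is the standard one: the identity $\mathrm{Ind}_N^G \mathrm{Res}_N^G \rho \cong \rho \oplus (\rho \otimes \varepsilon)$, verified by the character computation on and off $N$, combined with Frobenius reciprocity gives $\langle \mathrm{Res}_N^G\rho, \mathrm{Res}_N^G\rho\rangle_N = 1 + \langle \rho, \rho\otimes\varepsilon\rangle_G \in \{1,2\}$, and the value $2$ forces exactly two distinct constituents each with multiplicity one since $\sum_i m_i^2 = 2$ has only that solution. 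Your argument for the final clause is also right, though there is a small typo: Frobenius reciprocity gives $\langle \tau, \mathrm{Res}_N^G \mathrm{Ind}_N^G \tau\rangle_N = \langle \mathrm{Ind}_N^G\tau, \mathrm{Ind}_N^G\tau\rangle_G$ (the inner product on the right should be taken over $G$, not $N$); alternatively, and more simply, $\mathrm{Ind}_N^G\tau$ is a nonzero representation of $G$, so it has some irreducible constituent $\rho$, and $\langle \mathrm{Ind}_N^G\tau, \rho\rangle_G = \langle \tau, \mathrm{Res}_N^G\rho\rangle_N \geq 1$ exhibits $\tau$ as a constituent of $\mathrm{Res}_N^G\rho$, hence as $\rho^0$, $\rho^+$, or $\rho^-$.
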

\begin{remark}
    For the above result, it is possible that there exists two representations $\rho$ and $\rho'$ such that $\rho^0 \cong (\rho')^0.$ Hence to determine the complete set, we need to understand these cases as well. 
\end{remark}

Since $D_n \unlhd B_n$ and $[B_n:D_n]=2$, we can use the above result to determine irreducible representations of $D_n$. We obtain the following:
\begin{enumerate}
    \item $\mathrm{Res}_{D_n}^{B_n}U_{(\lambda,\mu)}$ is irreducible if and only if $\lambda \neq \mu$.
    \item $U_{(\lambda,\mu)}^0 \cong  U_{(\lambda',\mu')}^0$ if and only if $\{ \lambda, \mu \} = \{ \lambda', \mu' \}$.
    \item $\mathrm{Res}_{D_n}^{B_n}U_{(\lambda,\lambda)}\cong U_{(\lambda,\lambda)}^+ \oplus U_{(\lambda,\lambda)}^-$. 
    
\end{enumerate}
Hence
\[
\mathrm{Irr}(D_n) \longleftrightarrow \{ U_{(\lambda,\mu)}^0 , U_{(\lambda,\lambda)}^+,  U_{(\lambda,\lambda)}^- \mid (\lambda,\mu)\models n, (\lambda,\lambda)\models n, \lambda \geq \mu\}
\]
and \begin{enumerate}
    \item $\text{dim}(U_{(\lambda,\mu)}^0)=\text{dim}(U_{(\lambda,\mu)})$.
    \item $\text{dim}(U_{(\lambda,\lambda)}^+)=\text{dim}(U_{(\lambda,\lambda)}^-)=\frac{\text{dim}(U_{(\lambda,\lambda)})}{2}$.
\end{enumerate}
\begin{remark}
   The group $I_2(m)$ has a normal cyclic subgroup of index two. Hence the irreducible representations of the dihedral group $I_2(m)$ can be found by the above method and \Cref{thm-frobenius-reciprocity}.
\end{remark}

\section{Further discussion and questions}
  
Many open problems regarding the Coxeter groups are included in \cite{brenti2024openproblemscoxetergroups}. In this section, we list a few additional open problems related to the representations of finite Coxeter groups that may be of an interest to the reader. 
\begin{enumerate}
\item Let $G$ be a finite Coxeter group. For irreducible representations $\chi$ and $\rho$ of $G$, it is an open problem to describe the constituents and their multiplicities of the tensor product $\chi \otimes \rho$. These problems remain open  for groups of type $A_n$ (the symmetric groups); see \cite{MR3219569} and \cite{MR1722888}.  
\item(Saxl conjecture for symmetric groups $S_n$) Let $n \in \mathbb N$ be such that $n = 1 + 2 +3+ 4+ \cdots + k$ for some $k$. The Saxl conjecture states that the tensor square $V_\lambda \otimes V_\lambda$ for $\lambda = (k, k-1, \ldots, 1)$ contains every irreducible representation of $S_n$ as a constituent. This conjecture is still open; see \cite{MR3466368} , \cite{MR3436397} and \cite{MR3357782}. Interestingly,  the tensor cube version of Saxl conjecture, that is $V_\lambda \otimes V_\lambda \otimes V_\lambda$ contains all irreducible representations of $S_n$, has recently been proved by Harman-Ryba~\cite{MR4591596}

 \item The tensor square conjecture for $S_n$ is parallel to the Saxl conjecture for all $n$ and is stated as follows: 
\begin{conjecture}(Tensor-square Conjecture)
For each $n$, there exists a partition $\lambda$ of $n$ such that $V_\lambda \otimes V_\lambda$  contains all irreducible representations of $S_n$. \end{conjecture}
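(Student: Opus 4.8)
The plan is to exhibit, for each $n$, an explicit self-conjugate partition and prove that its tensor square exhausts $\mathrm{Irr}(S_n)$. Self-conjugacy is forced: for any $\lambda \vdash n$ one has $\langle \chi_\lambda\chi_\lambda,\,\mathrm{sgn}\rangle = \langle \chi_\lambda,\,\chi_\lambda\otimes\mathrm{sgn}\rangle = \langle \chi_\lambda,\,\chi_{\lambda'}\rangle = \delta_{\lambda,\lambda'}$, so the sign representation occurs in $V_\lambda\otimes V_\lambda$ if and only if $\lambda=\lambda'$. When $n=\binom{k+1}{2}$ is triangular the natural candidate is the staircase $\delta_k=(k,k-1,\ldots,1)$, and the claim becomes exactly Saxl's conjecture; for general $n$ with $\binom{k}{2}<n\le\binom{k+1}{2}$ one would take a minimal self-conjugate modification of $\delta_k$ (for instance $\delta_k$ with a symmetric hook deleted, or a truncated doubled staircase), so the first task is to pin down a uniform family $\lambda(n)$ and check it is self-conjugate. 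Equivalently, the goal is to prove $g(\lambda(n),\lambda(n),\mu)>0$ for every $\mu\vdash n$, where $g(\alpha,\beta,\gamma)=\langle \chi_\alpha\chi_\beta,\chi_\gamma\rangle$ is the Kronecker coefficient.

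The second step is to reduce this positivity statement to a bounded list of $\mu$, using two complementary tools. First, the semigroup property of Kronecker coefficients: if $g(\alpha_1,\beta_1,\gamma_1)>0$ and $g(\alpha_2,\beta_2,\gamma_2)>0$ then $g(\alpha_1+\alpha_2,\beta_1+\beta_2,\gamma_1+\gamma_2)>0$, where $+$ denotes rowwise addition of partitions; combined with the branching rule and the Littlewood--Richardson rule this lets one assemble a general $\mu$ out of a few small "building block" shapes whose Kronecker products with small staircases can be settled by an explicit finite check. Second, a character estimate: splitting off the identity contribution,
\[
\langle \chi_\lambda^2,\,\chi_\mu\rangle \;=\; \frac{(\dim V_\lambda)^2\,\dim V_\mu}{n!} \;+\; \frac{1}{n!}\sum_{g\neq e}\chi_\lambda(g)^2\,\chi_\mu(g),
\]
the leading term is positive and dominates whenever $\dim V_\mu$ is not too small, so only the $V_\mu$ with $\mu$ (or $\mu'$) having very few rows need separate attention --- and those are precisely the shapes on which the combinatorial/Littlewood--Richardson argument above is most effective. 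One would also record that $g(\lambda,\lambda,\lambda)>0$ for the chosen $\lambda$, so $V_\lambda$ itself occurs in $V_\lambda\otimes V_\lambda$; this dovetails with the Harman--Ryba tensor-cube theorem, since from $V_\lambda\hookrightarrow V_\lambda^{\otimes 2}$ and $V_\mu\hookrightarrow V_\lambda^{\otimes 3}$ one gets structural leverage, though not the conclusion outright.

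The hard part --- and the reason the conjecture is open even in the triangular (Saxl) case --- is controlling the error sum $\sum_{g\neq e}\chi_\lambda(g)^2\chi_\mu(g)$ when $\mu$ is generic. There is no known nonnegative combinatorial formula for $g(\lambda,\lambda,\mu)$, the character values $\chi_\lambda(g)$ oscillate in sign, and the naive triangle-inequality bound already fails for $\mu=(n)$ even though that constituent is present with multiplicity exactly one. So any successful argument must supply genuinely cancellation-free lower bounds for these staircase-type Kronecker coefficients: either a combinatorial model (crystals, an honest positive rule for $g(\lambda,\lambda,\mu)$, or a determinantal identity in the spirit of Borodin--Okounkov), or an asymptotic estimate sharp enough to beat the worst case $\dim V_\mu = 1$. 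Absent such an input, the plan above disposes of the "large" constituents and of the finitely many small ones, thereby reducing the tensor-square conjecture to a cleanly stated positivity problem for Kronecker coefficients of near-staircase shapes rather than settling it.
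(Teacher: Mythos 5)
This statement is a \emph{conjecture}: the paper lists it among open problems and supplies no proof, so there is no argument of the paper to compare yours against. Your proposal, to its credit, is candid that it does not close the problem either. The concrete gap is exactly where you place it: after the self-conjugacy reduction, the semigroup/Littlewood--Richardson assembly, and the main-term estimate $\langle\chi_\lambda^2,\chi_\mu\rangle \ge (\dim V_\lambda)^2\dim V_\mu/n! - \frac{1}{n!}\bigl|\sum_{g\ne e}\chi_\lambda(g)^2\chi_\mu(g)\bigr|$, one still needs a cancellation-free lower bound on $g(\lambda,\lambda,\mu)$ for \emph{every} $\mu$, including those with $\dim V_\mu$ small, and no such bound is known. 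What is actually provable by these methods is only partial positivity (e.g.\ for hooks, two-row shapes, and constituents reachable by the semigroup property from verified small cases); the step ``only finitely many $\mu$ need separate attention'' is not justified, because the error sum is not known to be $o\bigl((\dim V_\lambda)^2\dim V_\mu\bigr)$ uniformly in $\mu$. So the proposal is a reduction, not a proof, and the residual positivity problem is the entire open content of the Saxl and tensor-square conjectures.

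One genuinely useful byproduct of your first paragraph deserves emphasis: your identity $\langle\chi_\lambda\chi_\lambda,\mathrm{sgn}\rangle=\delta_{\lambda,\lambda'}$ shows the conjecture \emph{as stated in the paper} (``for each $n$'') is false for small $n$. For $n=2$ there is no self-conjugate partition at all, so $\mathrm{sgn}$ never occurs in any tensor square; for $n=4$ the unique self-conjugate partition is $(2,2)$, whose tensor square has dimension $4$, while $\sum_{\mu\vdash 4}\dim V_\mu=10$. The standard formulation (Pak--Panova--Vallejo) therefore restricts to $n$ sufficiently large, and the statement in the paper should be read with that caveat. You might flag this explicitly rather than leaving it implicit in the phrase ``self-conjugacy is forced.''
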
 
 
\item Generalizations of Saxl conjecture and Tensor square conjecture to other finite Coxeter groups beyond type 
$A_n$
 are under investigation; see~\cite{chen2024spinrepresentationsfinitecoxeter}.

 \item Classification of projective representations of finite Coxeter groups remains incomplete, particularly in exceptional and non-crystallographic types. See \cite{MR788161} for the notions of projective representations of a finite groups and 
 see \cite{MR2099926}, \cite{MR572019}, and  \cite{MR486086} for results regarding the projective representations of finite Coxeter groups.

\end{enumerate}
{\bf Acknowledgments:} The author gratefully acknowledges the financial support from SERB, India, through grant SPG/2022/001099.
\bibliographystyle{authoryear} \bibliography{refs}

\end{document}